\documentclass{amsart}
\usepackage{amsthm,amstext,amsmath,amscd,amssymb,latexsym}
\usepackage{color}
\usepackage[matrix,arrow]{xy}
\usepackage{amsfonts,enumerate,array}
\usepackage{todonotes}
\usepackage{verbatim}
\usepackage{amssymb} 

\usepackage{booktabs}

\usepackage{hyperref}
\hypersetup{
    colorlinks=true,         
    linkcolor=red,          
    citecolor=blue,          
    filecolor=blue,          
    urlcolor=blue            
}

\sloppy

\newcommand{\PP}{{\mathbb P}}

\newcommand{\LL}{{\mathcal L}}

\newcommand{\II}{{\mathcal I}}
\newcommand{\JJ}{{\mathcal J}}

\newcommand{\Bs}{{\rm{Bs}}}

\newcommand{\ls}{{\mathcal{L}}}

\newcommand{\Cr}{{\rm{Cr}}}
\newcommand{\N}{{\rm{N}}}
\newcommand{\Eff}{{\rm{Eff}}}

\DeclareMathOperator{\vdim}{vdim}
\DeclareMathOperator{\edim}{edim}
\DeclareMathOperator{\ldim}{ldim}

\DeclareMathOperator{\Pic}{Pic}

\DeclareMathOperator{\NE}{NE}




\newcommand{\paper}{: \begin{it}}
\newcommand{\jour }{, \end{it}}

\newtheorem{theorem}{Theorem}[section]
\newtheorem{lemma}[theorem]{Lemma}
\newtheorem{proposition}[theorem]{Proposition}
\newtheorem{corollary}[theorem]{Corollary}

\newtheorem{question}[theorem]{Question}
\newtheorem{definition}[theorem]{Definition}

\newtheorem{example}[theorem]{Example}

\theoremstyle{remark}
\newtheorem{remark}[theorem]{Remark}

\numberwithin{equation}{section}

\begin{document}

\title{Vanishing theorems for linearly obstructed divisors}

\author{Olivia Dumitrescu}
\address{
Olivia Dumitrescu:
Central Michigan University\\
Pearce Hall 209\\
Mount Pleasant, MI 48859\\}

\address{and Simion Stoilow Institute of Mathematics\\
Romanian Academy\\
21 Calea Grivitei Street\\
010702 Bucharest, Romania}
\email{dumit1om@cmich.edu}

\thanks{The research of O. D.\ has been supported by a grant from the Max-Planck Institute for Mathematics, Bonn, the Arthur J.~Krener fund in the University of 
California, Davis, and GRK 1463 \emph{Analysis,
Geometry, and String Theory} at the 
Leibniz Universit\"at 
 Hannover. O. D. is member of ``Simion Stoilow'' Institute of Mathematics 
of the Romanian Academy (http://www.imar.ro/)}

\author{Elisa Postinghel}
\address{Elisa Postinghel: Department of Mathematical Sciences, Loughborough University, LE11 3TU, UK}
\email{{\tt E.Postinghel@lboro.ac.uk}}

\thanks{The research of E. P. was partially supported by the project \emph{Secant varieties, computational complexity
and toric degenerations} realised within the Homing Plus programme of Foundation for Polish Science,
co-financed from EU, Regional Development Fund and by the Research Foundation - Flanders (FWO)}


\keywords{Linear systems, Fat points, Base locus, Linear speciality, Effective cone}

\subjclass[2010]{Primary: 14C20. Secondary: 14J70, 14C17.}


\maketitle

\centerline{\emph{Dedicated to the memory of Anthony Geramita}}

\begin{abstract}
We study  divisors on the blow-up of $\PP^n$ at  points in general position
that are non-special with respect to the notion of linear speciality introduced
 in \cite{BraDumPos}.
We describe the cohomology groups 
of their strict transforms  via the blow-up of the space along their linear base locus.
We extend the result to 
non-effective divisors that sit in a small region outside the effective cone.
As an application, we describe linear systems of divisors in $\PP^n$ blown-up at
points in star configuration and their strict transforms via the blow-up of the linear base locus. 
\end{abstract}

\section{Introduction}

The motivation for studying vanishing theorems of divisors comes from Birational Geometry (Mori's Minimal Model Program, see \cite{KM}) and Commutative Algebra (higher order embeddings of projective varieties, see \cite{bfs}). In particular, vanishing theorems have applications to positivity properties of divisors such as global generation, very ampleness and, more in general, $k$-very ampleness properties.

 We denote by $\LL=\LL_{n,d}(m_1,\ldots,m_s)$  the linear system of 
hypersurfaces of degree $d$ in $\PP^n$ passing through a collection of $s$ points in general position with multiplicities at least
$m_1,\ldots,m_s\ge0$ respectively.
The  {\em (affine) virtual dimension} of $\LL$ is denoted by
$$\vdim(\LL)=\binom{n+d}{n}-\sum_{i=1}^s\binom{n+m_i-1}{n},$$
and the {\em expected dimension} of $\LL$ is defined to be $\edim(\LL)=\max(\vdim(\LL),0)$. 
The problem of computing the dimension of such linear systems is often referred to  as \emph{(polynomial) interpolation problem in} $\PP^n$ (see e.g. \cite{Ciliberto} for an account).

If $D$ is the strict transform of a general divisor in $\LL$ in the blow-up $X$ of $\PP^n$ at the $s$ points, 
\begin{equation}\label{general divisor}
 D:= dH-\sum_{i=1}^s m_i E_i,
\end{equation}
 then $\vdim(D):=\vdim(\LL)$ equals $\chi(X, \mathcal{O}_X(D))$, the Euler characteristic of the sheaf on $X$ associated with $D$,  while $\dim(\ls)$ is the number of global section of $\mathcal{O}_X(D)$, namely the dimension of the space $H^0(X, \mathcal{O}_X(D))$.
Using the terminology of the interpolation problem, we will  refer to $D$ as a divisor of degree $d$ \emph{interpolating} $s$ general points with assigned multiplicities $m_1,\dots,m_s$.  
 
The inequality
$\dim(\LL)\ge\edim(\LL)$ is  always satisfied.
However, if the conditions imposed by the assigned multiple points are not linearly independent, 
then the actual dimension of $\LL$ is strictly  greater than the expected one: 
in that case we say that $\LL$  (or $D$) is \emph{special}.
 Otherwise, whenever the actual and the expected dimension coincide 
we say that $\LL$ is \emph{non-special}. The \emph{speciality} of $\LL$ (or $D)$ is defined to be the difference $\dim(\ls)-\edim(\ls)$.

In the last century the problem of computing the dimension (or, equivalently, computing the speciality) of  linear systems  was studied with different techniques by many people. 
In the planar case, the Segre-Harbourne-Gimigliano-Hirschowitz conjectures predicts all  special  linear systems. 
This famous conjecture gives information about the Mori cone of $X$, $\overline{\NE}(X)$, together with its dual, the nef cone of $X$. For example, one of its implications is the so called $(-1)$-Curves Conjecture, see   \cite[Conjecture 3.2.1]{CHMR} and \cite[Conjecture 1.1]{deF}. 
This  consists of a geometric description of the Mori cone of $X$: while the $K$-negative part of  $\overline{\NE}(X)$, namely  the set of classes intersecting negatively the canonical divisor, is known to be generated by classes of $(-1)$-curves,  in the case $s\ge 10$ the $K$-non-negative part would be  a
region with circular portions of  boundary.

The degeneration technique introduced by Ciliberto and Miranda
 (see e.g.\ \cite{Ciliberto,CHMR}) is a successful method in the study of interpolation problems.
However, in spite of many partial results, both
conjectures are still open in general.

In the case of $\PP^3$, there is an analogous conjectural classification 
of special linear systems formulated by Laface and Ugaglia (see e.g. \cite{laface-ugaglia-TAMS}).

Due to its complexity and mysterious geometry,
 the simple question of predicting and computing dimensions of such vector spaces is not even
 conjectured when $n$ is four or higher.
In the case of $\PP^n$ general results are rare and few things are known. 
The well-known Alexander-Hirschowitz Theorem states that a linear system in
 $\PP^n$ with an arbitrary number of double points in general position is non-special except in a list of 
exceptional cases in small degree (see e.g. \cite{AH3,ale-hirsch,ale-hirsch2} for more details). 
For higher multiplicities, the only  general result known so far is  a complete cohomological classification of the speciality of {\em only
 linearly obstructed} effective divisors, proved by Brambilla and the two authors of this manuscript in  \cite{BraDumPos} (see also \cite{Chandler}).
One of the goals of this paper is to extend such a classification  to the non-effective case. 
 
In order to classify the special divisors, one has to understand first what are the \emph{obstructions}, 
namely what are the varieties that whenever contained with multiplicity in the base locus of a given
 divisor force $\LL$ to be special. In \cite{boccin,bocci2} these varieties are named {\em special effect varieties}. 
Few examples of obstructions were classified 
before \cite{BraDumPos}. The only  examples known were $(-1)$-curves in
 $\PP^2$ and $\PP^3$ (see \cite{laface-ugaglia-TAMS}) and those appearing in the list of exceptions from the Alexander-Hirschowitz Theorem. Theorem \ref{monster for effective} below (that was proved in \cite[Theorem 4.6]{BraDumPos}) and  
Corollary \ref{number of sections} show that, for any effective divisor, {\em linear cycles} of arbitrary dimension are always obstructions.

  In the direction of extending the Segre-Harbourne-Gimigliano-Hirschowitz Conjecture to $\PP^n$ and possibly to 
other rational projective varieties, we pose  the same natural and general question as in \cite{BraDumPos}.

\begin{question}[{\cite[Question 1.1]{BraDumPos}}]\label{question}

Consider any non-empty linear system $\LL$ in $\PP^{n}$. 
Let $\widetilde{\mathcal{X}}$ be the smooth composition of blow-ups of $\PP^n$ along the 
(strict transforms of the) 
cycles of the base locus of $\ls$, ordered in increasing dimension. We denote by $D$ a general divisor of the linear system  $\LL$, and by $\widetilde{{\mathcal{D}}}$ the strict transform of $D$  in $\widetilde{\mathcal{X}}$.
Does $h^i(\widetilde{\mathcal{X}},\mathcal{O}_{\widetilde{\mathcal{X}}}(\widetilde{\mathcal{D}}))$ vanish for all $i\ge 1$?

\end{question}
We remark that $\widetilde{\mathcal{D}}$ is obtained by suitably blowing-up the whole base locus of $D$ and subtracting the fixed hypersurfaces. 
Precisely, since these divisorial components of the base locus split off the system, taking proper transform under blow-up is equivalent to the deletion of these divisorial components.

For linear divisorial components more details are presented in  Section \ref{blow-up construction}.

In general Question \ref{question} is difficult to answer since it requires first to describe 
the base locus of a linear system and second to compute the 
cohomology of the strict transform after blowing it up.

An affirmative answer to Question \ref{question} implies that 
$\dim(\LL)=\chi(\widetilde{\mathcal{X}}, \mathcal{O}_{\widetilde{\mathcal{X}}}(\widetilde{\mathcal{D}}))$, translating the classical dimensionality problem for linear systems
 into a Riemann-Roch formula for divisors living in subsequently blown-up spaces.

We denote by $\tilde{D}$ the strict transform of $D$ in $\tilde{X}$, the
 blow-up of $X$ along the linear cycles of the base locus of $D$. A precise definition  is given below in \eqref{proper transform 1} with $r=n-1$. We remark that $\tilde{D}$ is different from $\widetilde{\mathcal{D}}$ that is introduced in Question \ref{question}:  
the second one denotes the strict transform of $D$ in $\widetilde{\mathcal{X}}$,  the blow-up of $X$ along all -linear and non linear - cycles of the base locus of $D$.

Due to the combinatorial and geometrical 
complexity of this problem so far we only understand properties of divisors $\tilde{D}$ and we will present them in detail. 

We  mention that for $s\leq n+2$ the divisors $\tilde{D}$ obtained by subsequent blow-up of the linear base locus (described in details in Sections \ref{section non-effective statements} and \ref{section non-effective proofs}) are divisors in $\overline{\mathcal{M}}_{0,n+3}$, the moduli space of stable rational curves with $n+3$ marked points, see \cite{Ka}. Therefore understanding their cohomological description can be used in the study of positivity properties such as the 
effective cone and the ample cone of $\overline{\mathcal{M}}_{0,n+3}$ (see also {\cite[Section 6.3]{BraDumPos}}).

In the article \cite{BraDumPos}, the authors introduced a new notion 
of expected dimension for linear systems, 
that takes into account the linear obstructions and extends the notion of virtual dimension, 
namely the {\em linear virtual dimension}. In this paper, we will use $\ldim(\LL)$ to denote the (affine) linear virtual dimension, 
instead of the (projective) expected linear dimension as used in \cite{BraDumPos}.
Given two linear systems $\ls_{n,d}(m_1,\dots,m_s)$ and $\ls_{n,d}(m'_1,\dots,m'_s)$
 with the same degree,
we write $\ls\prec_s\ls'$  if $m_i\ge m'_i$ for 
all $i\in\{1,\dots,s\}$.

\begin{definition}[{\cite[Definition 3.2]{BraDumPos}}]\label{new-definition}
Given a linear system $\ls=\ls_{n,d}(m_1,\dots,m_s)$, for any integer 
$-1\le r\le  s-1$ and for any multi-index $I(r)=\{i_1,\ldots,i_{r+1}\}\subseteq\{1,\ldots,s\}$, define the integer 
\begin{equation}\label{mult k}
k_{I(r)}:=\max(m_{i_1}+\cdots+m_{i_{r+1}}-rd,0).
\end{equation}

The {\em (affine) linear virtual dimension} of $\LL$ (or of $D$), denoted by $\ldim(\LL)$,   is the number
\begin{equation}\label{linvirtdim}
\sum_{r=-1}^{s-1}\sum_{I(r)\subseteq \{1,\ldots,s\}} (-1)^{r+1}\binom{n+k_{I(r)}-r-1}{n},
\end{equation}
 where we set $I(-1)=\emptyset$. 
The {\em (affine) linear expected dimension} of $\LL$
is defined as follows: it is $0$ if $\LL\prec_s\ls'$ and $\ldim(\ls')\le0$,
otherwise it is the maximum between  $\ldim(\LL)$ and $0$.
\end{definition}
 We remark that  this notion is well-defined not only for all effective divisors but also for non-effective ones. We will study this type of divisors in Sections \ref{section non-effective statements} and \ref{section non-effective proofs}.

In this light, asking whether the dimension of a
 given linear system equals its linear expected dimension can be thought of
as a refinement of the classical question of asking whether the dimension 
equals the expected dimension. If the answer to this question is affirmative, then $\LL$ (or $D$) 
is said to be a  {\em only linearly obstructed}. Obviously, non-special linear systems are always only linearly obstructed.

There exist linear systems that are {\em linearly obstructed} without being 
{\em only linearly obstructed}. For instance $\LL_{4,10}(6^7)$ contains all lines 
$L_{ij}$, $i,j\in\{1,\dots,7\}$  with multiplicity two in its base locus as well as
 the rational normal curve through the seven points, see \cite[Example 6.2]{BraDumPos} for more details.

Connections between $\LL$ being only linearly obstructed
and the Fr\"oberg-Iarrobino Conjecture (see \cite{Chandler}),
describing the Hilbert series of an ideal generated by $s$
forms, can be found in {\cite[Section 6]{BraDumPos}}. 
This reveals the importance of the notion of linear speciality, that was achieved and developed independently from both the geometric and the algebraic setting.

 Linear systems with an arbitrary number of points and with bounded sum of the multiplicities were classified in \cite{BraDumPos}, for $n\geq1$, $d\geq 2$, by proving that they are only linearly obstructed. 

\begin{theorem}[{\cite[Theorem 5.3]{BraDumPos}}]\label{theorem a n+3}
All non-empty linear systems of the form $\ls=\ls_{n,d}(m_1,\dots,m_s)$ with $s\le n+2$ base points  are only linearly obstructed. 
Moreover, if $s\ge n+3$ and
\begin{flushleft}
\eqref{EffectivityCondition}\quad\quad\quad\quad\quad
$\sum_{i=1}^s m_i\leq nd+ \min (n-s(d),s-n-2), \quad 1\le m_i\le d$,
\end{flushleft}
where $s(d)\geq0$ is the number of points of multiplicity $d$,
then $\LL$ is non-empty and only linearly obstructed.
\end{theorem}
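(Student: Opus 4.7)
The plan is to handle the two assertions separately: a direct argument for $s \le n+2$ and an inductive one for $s \ge n+3$.

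For $s \le n+2$, the strategy would be to compute $\dim(\LL)$ explicitly. When $s \le n+1$, general position lets us place the points at the coordinate vertices $e_0,\dots,e_{s-1}$, so $\LL_{n,d}(m_1,\dots,m_s)$ becomes a monomial system: a monomial $x_0^{a_0}\cdots x_n^{a_n}$ of degree $d$ belongs to $\LL$ iff $\sum_{j\neq i-1} a_j \ge m_i$ for every $i$. Counting such monomials by inclusion-exclusion over the violated constraints produces term by term the alternating sum in \eqref{linvirtdim}, with each $I(r)\subseteq\{1,\dots,s\}$ contributing $(-1)^{r+1}\binom{n+k_{I(r)}-r-1}{n}$; hence $\dim(\LL)=\ldim(\LL)$. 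For $s=n+2$, I would use the standard Cremona transformation centered at the first $n+1$ points to reduce $\LL_{n,d}(m_1,\dots,m_{n+2})$ to an equivalent system of strictly smaller degree or with some $m_i'=0$. A direct check shows that the numbers $k_{I(r)}$ are merely permuted under this transformation, so $\ldim$ is Cremona-invariant, and after finitely many iterations we fall back to the $s\le n+1$ case.

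For $s \ge n+3$ under \eqref{EffectivityCondition}, I would argue by induction, taking the first part as base case and inducting on $\sum m_i$ or on $d$. Non-emptiness should come from an explicit lower bound on $\ldim(\LL)$: the inequality $\sum m_i\le nd+\min(n-s(d),s-n-2)$ is calibrated precisely so that only a controlled family of subsets $I(r)$ have $k_{I(r)}>0$, and evaluating \eqref{linvirtdim} under these restrictions yields $\ldim(\LL)\ge 1$. For linear non-speciality I would use a Castelnuovo-type restriction exact sequence
\[
0\longrightarrow \LL'\longrightarrow \LL\longrightarrow \LL|_H\longrightarrow 0,
\]
for a suitably chosen hyperplane $H$ (either through a point of multiplicity $d$ or through none of the points, depending on the regime), where both $\LL'$ and $\LL|_H$ have strictly smaller invariants and are linearly non-special by induction. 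Additivity of $\ldim$ along the sequence, together with surjectivity of the restriction map on global sections, then propagates linear non-speciality to $\LL$ itself.

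The main obstacle is propagating \eqref{EffectivityCondition} through the induction. Under hyperplane restriction or Cremona moves the invariant $s(d)$ jumps discontinuously, since a point of multiplicity $d$ imposes very different conditions when the ambient degree shifts; similarly $\sum m_i$ changes non-uniformly. One must verify at every step that the reduced systems still satisfy an analogue of \eqref{EffectivityCondition} so that the inductive hypothesis applies, and simultaneously that the restriction map on global sections is surjective so that additivity of $\ldim$ across the sequence is preserved. The two-branch $\min(n-s(d),s-n-2)$ in the hypothesis precisely encodes the two regimes one must track in parallel, and balancing them is the combinatorial heart of the argument.
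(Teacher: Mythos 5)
The paper does not actually prove this statement: it is imported verbatim as \cite[Theorem 5.3]{BraDumPos}, so the only internal benchmark is the argument of the cited source, which Section \ref{vanishing section n+3} of this paper closely mirrors when extending the result. Against that benchmark, your first step is sound: for $s\le n+1$ the system is monomial once the points sit at the coordinate vertices, a monomial violates the $i$-th vanishing condition iff $a_{i-1}\ge d-m_i+1$, and inclusion--exclusion over a subset $I(r)$ counts $\binom{n+K_{I(r)}-r-1}{n}$ violators, which coincides with $\binom{n+k_{I(r)}-r-1}{n}$ in every case (both vanish when $K_{I(r)}\le r$). This reproduces \eqref{linvirtdim} term by term and is essentially the computation the source relies on (compare the appeal to \cite[Lemma 5.1]{BraDumPos} in the proof of Proposition \ref{h^0 in sequence A}).

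The other two steps have genuine gaps. For $s=n+2$, the pivotal claim that the $k_{I(r)}$ are ``merely permuted'' by the standard Cremona transformation is false: with $c=(n-1)d-\sum_{i=1}^{n+1}m_i$ one gets $K'_{I}=K_{I}+c$ for $I\subseteq\{1,\dots,n+1\}$ and $K'_{I}=K_{I}$ for $n+2\in I$, so the multiset of the $k$'s changes; Cremona-invariance of $\ldim$ is itself a nontrivial theorem, and the reduction can moreover produce negative multiplicities, where $\ldim$ is not defined. (The source obtains the whole range $s\le n+2$ instead as part of Theorem \ref{monster for effective}, via induction through the blow-up of the linear base locus.) For $s\ge n+3$, restricting to a hyperplane does not work as proposed: the kernel is $\LL_{n,d-1}(\dots)$, so the right-hand side of \eqref{EffectivityCondition} drops by $n$ while the left-hand side drops only by the multiplicities you can place on $H$, and $s(d)$ jumps; the ``main obstacle'' you name in your closing paragraph is precisely where the proof lives, and it is left unresolved. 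The argument that closes is the one reproduced in Section \ref{vanishing section n+3}: twist by the exceptional divisor of a point, $D'=D+E_1$, and restrict to $E_1$, which is a blown-up $\PP^{n-1}$ with $s-1$ points; Lemma \ref{cond b n+3} is exactly the statement that \eqref{EffectivityCondition} survives this restriction, which is what makes the induction on $n$ and $b$ run. Finally, non-emptiness cannot be extracted from $\ldim(\LL)\ge 1$ without circularity (that presupposes $h^0=\ldim$); in the inductive scheme it follows from non-emptiness of the kernel system.
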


 The new perspective  introduced in \cite{BraDumPos} is built upon the cohomological study 
of the strict transforms of effective and only
 linearly obstructed divisors. More precisely, the strict transforms are taken
 after subsequently blowing-up their linear base locus, first the lines, then the planes, etc.
 Moreover, in \cite{BraDumPos} a complete classification was given for effective divisors 
 interpolating $s\le n+2$ general points with assigned multiplicities,  in which range 
the effective cone was known (see for example \cite{CDD}). 

For every effective divisor $D$,  let $D_{(r)}$ denote the strict transform of $D$ 
in the space $X^n_{(r)}$ obtained as a sequence of blow-ups of $\PP^n$ along the linear base locus 
of $D$  up to dimension $r$, with $r\leq n-1$ (we refer to Section \ref{blow-up construction} for details about this construction):
\begin{equation}\label{proper transform 1}
D_{(r)}:=D-\sum_{\rho=1}^{r}\sum_{I(\rho)\subseteq\{1,\dots,s\}}k_{I(\rho)}E_{I(\rho)},
\end{equation}
 where $E_{I(\rho)}$ denotes the (strict transform of the) exceptional divisor of the 
linear subspace of $\PP^n$ of dimension $\rho$
spanned by the points parametrised by the multi-index $I(\rho)$ and $k_{I(\rho)}$ 
is the non-negative  integer introduced in \eqref{mult k}, which is the multiplicity with 
which the aforementioned subspace is contained in the base locus; this will be proved in Section \ref{linear base locus lemma}.
Let $\bar{r}$ be the maximum dimension of the linear base locus;
we will set 
\begin{equation}\label{proper transform 2}
\tilde{D}:=D_{(\bar{r})}.
\end{equation}
To simplify notation here and throughout the paper we will also abbreviate 
$h^i(X^n_{(r)},\mathcal{O}_{X^n_{(r)}}(D_{(r)}))$
by $h^i(D_{(r)})$.

\begin{theorem}[{\cite[Theorem 4.6]{BraDumPos}}]\label{monster for effective} 
Given integers $d,m_1,\dots,m_s$, consider the divisor \eqref{general divisor}.
If  $s\leq n+2$ and $D$ is effective, the following statements hold.
\begin{enumerate}
\item[(a)]
$h^0(D)=\ldim(D)$ and  $h^{i}(\tilde{D})=0$ for every $i\geq 1$. 
\item[(b)] For any $0\leq r\leq n-1$, $h^i(D_{(r)})=0$ for every $i\geq 1$ and $i\ne  r+1$, while
$$
h^{r+1}(D_{(r)})=\sum_{\rho=r+1}^{s-1}\sum_{I(\rho)\subseteq\{1,\dots,s\}}
 (-1)^{\rho-r-1}{{n+k_{I(\rho)}-\rho-1}\choose{n}}.
$$
\end{enumerate}

\end{theorem}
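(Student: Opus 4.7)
The plan is to prove (2) by descending induction on $r$ from $r = n-1$ to $r = 0$, establishing as the base case the vanishing claimed in (1) for $\tilde D$. The equality $h^0(D) = \ldim(D)$ in (1) then follows from the $r = 0$ case of (2) via Riemann-Roch: $\chi(D) = \vdim(D)$ together with $h^i(D) = 0$ for $i \geq 2$ and the explicit formula for $h^1(D)$ yields $h^0(D) = \vdim(D) + h^1(D) = \ldim(D)$ directly from the definition of the linear virtual dimension.

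For the base case, I would establish $h^i(\tilde D) = 0$ for all $i \geq 1$ exploiting the hypothesis $s \leq n + 2$. When $s \leq n + 1$, the points can be placed at coordinate vertices of $\PP^n$ so that $X^n_{(n-1)}$ is a smooth projective toric variety (of permutohedral / Losev-Manin type) and $\tilde D$ is a torus-invariant nef divisor; higher cohomology then vanishes by Demazure vanishing on toric varieties. When $s = n + 2$, apply a Cremona transformation based at $n + 1$ of the points, which lifts to a birational equivalence of the blow-up models away from a sufficiently deep locus, reducing either to the $s \leq n + 1$ case or to a system of strictly smaller total multiplicity handled by a secondary induction.

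For the inductive step, assume (2) for $D_{(r)}$ with $r \geq 1$, and derive it for $D_{(r-1)}$. The morphism $\pi_r : X^n_{(r)} \to X^n_{(r-1)}$ blows up the (pairwise disjoint, by general position) strict transforms of the $r$-dimensional spans $L_{I(r)}$; each exceptional $E_{I(r)}$ is a $\PP^{n-r-1}$-bundle over the strict transform of $L_{I(r)}$. Since $R^j \pi_{r,*} \OO = 0$ for $j > 0$, one has $H^i(D_{(r-1)}) \cong H^i(\pi_r^* D_{(r-1)})$, and this is compared with $H^i(D_{(r)}) = H^i(\pi_r^* D_{(r-1)} - \sum_{I(r)} k_{I(r)} E_{I(r)})$ via the filtration
\[\OO(D_{(r)}) \subset \OO(D_{(r)} + E_{I(r)}) \subset \cdots \subset \OO(\pi_r^* D_{(r-1)})\]
with successive quotients $\OO_{E_{I(r)}}(\pi_r^* D_{(r-1)} - j E_{I(r)})$ for $0 \leq j < k_{I(r)}$. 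Using $\OO(-E_{I(r)})|_{E_{I(r)}} = \OO_{\mathrm{rel}}(1)$ and the projective bundle formula, each such quotient's cohomology is concentrated in a single degree and contributes binomially; summation via a hockey-stick type identity produces precisely the contribution $\binom{n + k_{I(r)} - r - 1}{n}$ for each $I(r)$, and the long exact sequence in cohomology, together with sign-tracking, transforms $h^{r+1}(D_{(r)})$ into $h^r(D_{(r-1)})$ as claimed, while all other higher cohomologies remain zero.

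The main obstacle is the base case: establishing the higher cohomology vanishing of $\tilde D$ on the elaborate iterated blow-up $X^n_{(n-1)}$. The toric identification for $s \leq n + 1$ is clean, but the $s = n + 2$ Cremona step requires careful verification that the transformed divisor remains in the appropriate positive cone throughout the reduction and that a secondary induction parameter genuinely decreases. The descending step is technically intricate but reduces to a bookkeeping exercise manipulating exact sequences and projective bundle formulas once the relevant binomial identities are in hand.
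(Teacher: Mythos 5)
A preliminary remark: this paper does not itself prove Theorem \ref{monster for effective} --- it is imported from \cite[Theorem 4.6]{BraDumPos} --- but the machinery of Sections \ref{blow-up construction}--\ref{vanishing empty section} reproduces and extends that proof, so a comparison is still possible. Your skeleton (peel off the exceptional divisors $E_{I(r)}$ one multiple at a time, compute the cohomology of the successive quotients by K\"unneth, and collect binomial contributions) is the right one, but two steps fail as written. First, the assertion $\OO(-E_{I(r)})|_{E_{I(r)}}=\OO_{\mathrm{rel}}(1)$ is false: by Lemma \ref{normal bundle}, $-E_I|_{E_I}=\Cr_r(h_b)\boxtimes h_f$ on $E_I\cong X^r_{(r-2)}\times\PP^{n-r-1}$, i.e.\ the conormal bundle has a nontrivial first-factor component equal to the Cremona transform of the hyperplane class. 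The successive quotients are therefore of the form $\bigl(-(k_{I(r)}-j)\Cr_r(h)\bigr)\boxtimes jh_f$, and to see that their cohomology sits in degree $r$ with dimension $\binom{k_{I(r)}-j-1}{r}\binom{n-r-1+j}{n-r-1}$ one needs $h^i(-a\Cr_r(h))=h^i(\PP^r,\OO(-a))$, which is precisely Theorem \ref{vanishing cremona} --- a nontrivial statement with its own induction that your sketch omits entirely. The identity assembling these into $\binom{n+k_{I(r)}-r-1}{n}$ is then Vandermonde's convolution across the two K\"unneth factors, not a hockey-stick sum; a single hockey-stick would give $\binom{k_{I(r)}}{r+1}$, which is the wrong answer.

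Second, the long exact sequences do not by themselves produce the dimension formula. Each restriction step gives $H^{r}(F)\to H^{r}(F|_E)\to H^{r+1}(F-E)\to H^{r+1}(F)\to 0$, so one must control the rank of the restriction map $H^{r}(F)\to H^{r}(F|_E)$ before the dimensions add with alternating signs; a pure descending induction on $r$ supplies no such control. The paper's route (mirroring \cite{BraDumPos}) instead inducts on $n$ and on $b=\sum m_i-nd$ via the auxiliary divisor $D'=D+E_1$ and the sequence \eqref{seq Ar}, and the needed surjectivity on global sections (Proposition \ref{h^0 exact}) is extracted from the \emph{independently proven} count $h^0(D)=\ldim(D)$ of Theorem \ref{theorem a n+3}; your proposal has no substitute for this input. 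Your base case is also incomplete: for $s\le n+1$ the nefness of $\tilde D$ required for Demazure vanishing is asserted without proof, and for $s=n+2$ the Cremona reduction terminates at Cremona-reduced divisors that still have $n+2$ base points, so it does not reduce to $s\le n+1$ and those terminal cases still need a direct argument.
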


The goal of this paper is to  show that the same type of results 
as in Theorem \ref{monster for effective}  holds for larger classes of divisors, 
such as effective divisors with arbitrary number of general base points and non-effective divisors.
The definition of strict transform after blowing-up the linear base locus is formally extended to the non-effective case 
in Section \ref{section base locus empty case}.

We also extend the formula in Theorem \ref{monster for effective} to the case 
of any effective divisor, not necessarily only linearly obstructed,
 interpolating an arbitrary collection
of general multiple points.

\begin{theorem}\label{higher cohom intro}
Given integers $d,m_1,\dots,m_s$, consider the divisor $D$ of the form  \eqref{general divisor}.
If $D$ is effective, then
 for any $0\leq r\leq n-1$ we have
\begin{align*}
h^{r+1}(D_{(r)})=&\sum_{\rho=r+1}^{s-1}\sum_{I(\rho)\subseteq\{1,\dots,s\}}
 (-1)^{\rho-r-1}{{n+k_{I(\rho)}-\rho-1}\choose{n}}\\
& +\sum_{\rho=r+1}^{n}(-1)^{\rho-r-1}h^{\rho}(\tilde{D}).
\end{align*}
In particular, $$h^0(D)=\ldim(D)+\sum_{\rho=1}^{n}(-1)^{\rho+1}h^{\rho}(\tilde{D}).$$

Moreover, if $h^i(\tilde{D})=0$, for all $i\ge 1$, then $h^i(D_{(r)})=0$ for all $i\ne r+1$.
\end{theorem}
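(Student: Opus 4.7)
The plan is to proceed by descending induction on $r$, from $r = n-1$ down to $r = 0$, modeled on the argument of \cite[Theorem 4.6]{BraDumPos} but now keeping track of the possibly non-vanishing higher cohomologies of $\tilde D$. The base case $r = n-1$ is immediate: since $D_{(n-1)} = \tilde D$, the asserted identity reduces to $h^n(\tilde D) = h^n(\tilde D)$; the binomial sum over $\rho \ge n$ in the first summation is zero because no linear span of $\rho + 1 \ge n+1$ general points is a proper subvariety of $\PP^n$ (so no such $E_{I(\rho)}$ enters the construction), and correspondingly the binomial $\binom{n + k_{I(\rho)} - \rho - 1}{n}$ vanishes in the effective range $k_{I(\rho)} \le d$.

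For the inductive step, assume the formula holds at stage $r$ and derive it at stage $r-1$. The morphism $\pi_r\colon X^n_{(r)} \to X^n_{(r-1)}$ is a composition of blow-ups of disjoint smooth strict transforms of the $r$-dimensional linear cycles $L_{I(r)}$; by the projection formula
$$h^i(D_{(r-1)}) = h^i\bigl(D_{(r)} + \sum_{I(r)} k_{I(r)} E_{I(r)}\bigr).$$
For each multi-index $I(r)$, writing $E = E_{I(r)}$ and $k = k_{I(r)}$, iterate the short exact sequence
$$0 \to \OO(D_{(r)} + jE) \to \OO(D_{(r)} + (j+1)E) \to \OO_E\bigl((D_{(r)} + (j+1)E)|_E\bigr) \to 0$$
for $j = 0, 1, \dots, k-1$ and take long exact sequences in cohomology. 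This telescopes $h^i(D_{(r-1)})$ into $h^i(D_{(r)})$ plus contributions from the exceptional divisors.

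The crucial cohomological computation, and the main technical obstacle, is the evaluation of $h^\bullet(\OO_E((D_{(r)} + (j+1)E)|_E))$. The exceptional divisor $E$ is a $\PP^{n-r-1}$-bundle over the strict transform in $X^n_{(r-1)}$ of $L_{I(r)} \cong \PP^r$, which is itself the iterated blow-up of $\PP^r$ along the linear cycles spanned by subsets of the $r+1$ points. The restriction has fiber degree $k - j - 1 \ge 0$ along $\PP^{n-r-1}$, so by Leray its cohomology reduces to cohomology of a line bundle on the $r$-dimensional base, which is computed by the same theorem applied recursively in dimension $r < n$. Summing over $j = 0, \dots, k-1$ yields exactly the binomial $\binom{n+k-r-1}{n}$, landing in cohomological degree $r+1$ with the correct alternating sign. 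The bookkeeping of signs and degrees through the iterated long exact sequences is identical to the linearly non-special case of \cite{BraDumPos}: the exceptional-divisor calculations are entirely local and do not use global linear non-speciality of $D$, so they extend verbatim to our setting.

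Finally, the ``In particular'' clause follows by taking $r = 0$, combined with the parallel identities $h^i(D_{(0)}) = h^i(\tilde D)$ for $i \ge 2$ that the same induction produces (the $r$-dim cycles only affect $h^{r+1}$ in the induction, leaving higher degrees untouched); expressing $h^0(D)$ via the Euler characteristic $\chi(D) = \vdim(D)$ and the telescoping of the linear-dimension correction then gives $h^0(D) = \ldim(D) + \sum_{\rho=1}^n (-1)^\rho h^\rho(\tilde D)$. The ``Moreover'' clause is immediate: under the hypothesis $h^i(\tilde D) = 0$ for $i \ge 1$, the second sum in the formula collapses to zero, and the same inductive vanishing argument shows that all intermediate $h^i(D_{(r)})$ vanish outside degree $r+1$ (recovering exactly Theorem \ref{monster for effective}).
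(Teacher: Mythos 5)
Your overall strategy --- peel off the exceptional divisors $E_{I(r)}$ one multiple at a time, compute the cohomology of each restriction via the product structure of $E_{I(r)}$ and K\"unneth, and telescope through the long exact sequences --- is the same as the paper's, which runs this through the recursion of \cite[Proposition 4.10]{BraDumPos} inside the proof of Theorem \ref{higher cohomologies}. The gap is in your justification of the one step that carries all the content: the evaluation of $h^\bullet\bigl(\OO_E((D_{(r)}+(j+1)E)|_E)\bigr)$. You say this ``is computed by the same theorem applied recursively in dimension $r<n$.'' It is not. On the first factor $X^r_{(r-2)}$ of $E_{I(r)}$ the restricted divisor is the strict transform of the toric divisor $G=dh-\sum_{i\in I(r)}m_ie_i$ on $\PP^r$, which is \emph{non-effective} precisely because $b(G)=k_{I(r)}\ge 1$; it therefore lies outside the scope of the statement you are proving (and of Theorem \ref{monster for effective}), both of which assume effectivity, so the recursion does not close. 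What is actually needed is the absolute computation that $\tilde G=-k_{I(r)}\Cr_r(h)$ (Proposition \ref{strict=cremona}) together with $h^i(-j\Cr_r(h))=h^i(\PP^r,\OO(-j))$ for all $j\ge 1$ (Theorem \ref{vanishing cremona}, packaged as Theorem \ref{toric theorem}(1)); only then does the sum over $j=0,\dots,k_{I(r)}-1$ produce the binomial $\binom{n+k_{I(r)}-r-1}{n}$ concentrated in degree $r+1$ and nothing in other degrees. This is a nontrivial independent input --- the paper devotes Section \ref{Cremona of hyperplane class} to it --- and ``extends verbatim from \cite{BraDumPos}'' does not supply it: for $k_{I(r)}>r+1$ you need the cohomology of arbitrarily negative multiples of the Cremona divisor, and asserting that the calculation is ``local'' does not compute it.

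A second, smaller defect is in your base case $r=n-1$: you discard the terms of the first sum with $\rho\ge n$ on the grounds that ``the binomial vanishes in the effective range $k_{I(\rho)}\le d$.'' That implication is false: $\binom{n+k-\rho-1}{n}$ vanishes iff $k\le\rho$, and $k\le d$ does not give $k\le\rho$ when $d>\rho$. The terms do vanish, but for a different reason that requires an argument: effectivity of $D$ forces $k_{I(\rho)}=0$ for every $\rho\ge n$. Indeed, for any $(n+1)$-subset $J$ of the assigned points the subsystem $\LL_{n,d}(m_j:j\in J)\supseteq\LL$ is non-empty, which (e.g.\ by counting monomials at coordinate points) forces $\sum_{j\in J}m_j\le nd$; averaging this over all $(n+1)$-subsets of $I(\rho)$ gives $\sum_{i\in I(\rho)}m_i\le\frac{\rho+1}{n+1}nd\le\rho d$ for $\rho\ge n$, hence $K_{I(\rho)}\le 0$. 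This needs to be said explicitly, since the same binomials are genuinely nonzero for the non-effective divisors treated elsewhere in the paper, so nothing formal about the expression makes them disappear.
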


This  result is part of Theorem \ref{higher cohomologies} that will be proved in Section
\ref{section non-effective proofs} in a more general setting. 
The geometric interpretation is that for any effective divisor $D$,
 every $\rho$-dimensional linear 
cycle $L_{I(\rho)}$, for which $k_{I(\rho)}\ge 1$ and $\rho\ge r+1$,
gives a contribution with  sign, $(-1)^{r-\rho+1}$,  equal to
 \begin{equation}\label{The Newton binomial}
{{n+k_{I(\rho)}-\rho-1}\choose{n}}
\end{equation}
to $h^{r+1}(D_{(r)})$ and  to the formula for 
$\ldim(D)$ (cfr. Theorem \ref{higher cohomologies}  and Corollary \ref{number of sections}). 
Moreover, such a contribution is zero when $k_{I(\rho)}\le \rho$.

 The main result of this paper is a complete 
cohomological description of $D_{(r)}$ in the following cases,
where we set \begin{equation}\label{b}
b:=b(D)=\sum_{i=1}^sm_i-nd.
\end{equation}

\begin{theorem}\label{general statement vanishings}
Fix $d,m_1,\dots,m_s$.
Statements $(a)$ and $(b)$ of Theorem \ref{monster for effective} hold for 
all  divisors $D$ of the form \eqref{general divisor} with $m_i\le d+1$  under the following hypothesis:
$s\le n+1$  and $b\leq n$, or $s= n+2$ and   $b\le1$, or $s\ge n+3$ and $b\leq\min(n-s(d),s-n-2)$.

Moreover, if $s\le n+1$ then $h^i(\tilde{D})=0$, for all $i\leq n-1$, and 
$h^{n}(\tilde{D})={{b-1}\choose{n}}$ for $b\geq n+1$ and zero otherwise.
\end{theorem}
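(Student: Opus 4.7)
The plan is to reduce the entire statement to the computation of $h^i(\tilde D)$. Indeed Theorem \ref{higher cohom intro} shows that once the cohomology of the fully linearly blown-up divisor $\tilde D$ is understood, the formulas for $h^{r+1}(D_{(r)})$ in Theorem \ref{monster for effective} and for $h^0(D)$ both follow by the cohomological induction built into that theorem. The ``moreover'' statement is precisely a complete description of $h^\bullet(\tilde D)$ in the range $s \le n+1$, and since $\binom{b-1}{n} = 0$ for $b \le n$, this immediately recovers the conclusion of Theorem \ref{monster for effective} in that range. The argument thus splits into two phases: first compute $h^i(\tilde D)$ explicitly when $s \le n+1$; then propagate the vanishing to $s \ge n+2$ under the hypothesis $b \le \max(1, s-n-2)$ by degeneration.

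For the first phase I would argue by a double induction on $n$ and $d$, the base cases $n = 1$ and $d = 1$ being handled directly. The inductive step relies on peeling off a general hyperplane via the short exact sequence
\[
0 \to \mathcal{O}_{\tilde X}(\tilde D - \tilde H) \to \mathcal{O}_{\tilde X}(\tilde D) \to \mathcal{O}_{\tilde H}(\tilde D) \to 0,
\]
where $\tilde H$ is the strict transform of a generic hyperplane avoiding the base points. The divisor on the left has degree $d - 1$ with the same multiplicities, so induction on $d$ applies, while $\tilde H$ is naturally identified with the blow-up of $\PP^{n-1}$ along the base locus inherited by projecting the $s \le n + 1$ points from a generic point, so induction on $n$ applies to its cohomology. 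Cohomological chasing along this sequence establishes $h^i(\tilde D) = 0$ for $1 \le i \le n - 1$. The remaining nonzero $h^n(\tilde D) = \binom{b-1}{n}$ in the case $s = n+1$ and $b \ge n+1$ is identified by Serre duality on $\tilde X$: an explicit expression for $K_{\tilde X}$ in terms of $H$ and the exceptional divisors $E_{I(r)}$, together with the numerous cancellations coming from the fact that interior $E_{I(r)}$ contributions enter $K_{\tilde X}$ only through the codimension factor $n - r - 1$, reduces the computation to $h^0$ of the pullback of $\mathcal{O}_{\PP^n}(b - n - 1)$, which has dimension exactly $\binom{b-1}{n}$.

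For the second phase, I would use a Ciliberto--Miranda style degeneration: specialize the $s$-th point to the linear span of $n$ of the remaining points. The bound $b \le \max(1, s - n - 2)$ is tailored to guarantee that the limit divisor decomposes into a component supported on the blown-up $(n-1)$-plane, whose cohomology reduces to a system on $\PP^{n-1}$ with at most $n$ points (handled by the first phase), plus a residual divisor on the $s - 1$ remaining points for which $b$ decreases accordingly. Upper semicontinuity of cohomology in the flat family and induction on $s$ then close the argument.

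The main difficulty lies in the Serre duality step for $h^n(\tilde D)$ when $s = n+1$: one must carefully identify the canonical divisor of the iterated blow-up, verify that the exceptional contributions telescope as expected when paired with $\tilde D$, and confirm that the surviving $h^0$ matches the Newton-binomial $\binom{b-1}{n}$ on the nose. A secondary subtle point is the degeneration bookkeeping in the second phase: one must exclude the appearance of new linear base components on the special fiber beyond those foreseen by the bound on $b$; indeed, allowing $b$ to exceed $\max(1, s - n - 2)$ is exactly when nonlinear obstructions such as rational normal curves through the points can enter, and linear obstructions would no longer suffice.
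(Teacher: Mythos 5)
Your high-level skeleton agrees with the paper's: the theorem is indeed assembled by first computing $h^i(\tilde D)$ and then bootstrapping via Theorem \ref{higher cohomologies}, and your Serre-duality identification of $h^n(\tilde D)$ with $h^0(\PP^n,\mathcal{O}(b-n-1))=\binom{b-1}{n}$ is essentially the computation in Theorem \ref{vanishing cremona}. However, both phases of your argument have genuine gaps. In Phase 1, restricting to a \emph{general} hyperplane does not set up the induction you describe: a general $H'$ contains none of the $p_i$ and meets each $L_{I(\rho)}$ in a $(\rho-1)$-plane, so the trace $\tilde D|_{\tilde H'}$ still has degree $d$ (not $d-1$), carries no point multiplicities, and is supported on the \emph{star configuration} of traces $L_{I(\rho)}\cap H'$ rather than on $s$ general points of $\PP^{n-1}$ and their spans (hyperplane section is not projection from a point). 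The paper treats such star-configuration systems only in Section \ref{star configuration}, as a \emph{consequence} of the main theorems, so invoking them here is circular. On the kernel side, $\tilde D-H$ is not the strict transform of the degree-$(d-1)$ system, since the multiplicities $k_{I(\rho)}$ jump by up to $\rho$ when $d$ drops; patching this requires Theorem \ref{higher cohomologies}(2), whose proof already rests on the toric statement you are proving. The paper instead first shows that $\tilde D=-b\,\Cr_n(H)$ for $s\le n+1$ (Proposition \ref{strict=cremona}) and computes all cohomology of $a\,\Cr_n(H)$ by induction on $n$ and $b$ using restrictions to the \emph{exceptional} divisors $E_1$ and $E_{I(\rho)}$, where the normal-bundle formula of Lemma \ref{normal bundle} and the Kunneth formula apply.

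In Phase 2, the Ciliberto--Miranda degeneration is not the paper's method and is not carried far enough to work: semicontinuity controls only one inequality, the limit configuration has points in special position (outside the scope of Proposition \ref{sharpBLL} and of conditions (I)--(III)), and for $i\ge 1$ the matching conditions on the central fiber require a full analysis of the limit system --- which is exactly why the paper confines degenerations to the single example of Section \ref{degeneration}. The paper instead stays within the fixed general configuration: for effective $D$ with $s\ge n+3$ it inducts on $n$ and $b$ via the sequences \eqref{seq Ar} restricting to the exceptional divisor $E_1$ of a point (Theorem \ref{vanishing for >=n+3}); for non-effective $D$ with $s=n+2$ it alternates Castelnuovo-type restrictions to hyperplanes spanned by $n$ of the base points with base-locus removal (Theorem \ref{monster for empty}); and for $s\ge n+3$ non-effective it reduces to the case $s=n+2$ (Corollary \ref{empty arbitrary points}). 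Finally, the role of the bound $b\le\max(1,s-n-2)$ is combinatorial --- it guarantees that complementary cycles $L_I$ and $L_{I^c}$ cannot both lie in the base locus, so that condition (III) holds --- rather than excluding rational normal curves as you suggest.
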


The theorem summarises the results contained in Theorem \ref{vanishing for >=n+3},
 Theorem \ref{toric theorem},   Theorem  \ref{monster for empty} and
Theorem \ref{empty arbitrary points}.
This result shows that it makes sense to extend Question \ref{question} to non-effective divisors
 in a small region outside the effective cone with a correct definition of $D_{(r)}$.
In order to study classical interpolation problems, a crucial step is the
 study of non-effective divisors. More precisely, whenever a linear cycle
 is contained in the base locus of a divisor $D$ , 
 the normal bundle of its exceptional divisor after blow-up 
is given by a non-effective divisor (see Lemma \ref{normal bundle}), and the cohomology groups of its multiples produce a contributions to the speciality of the form \eqref{The Newton binomial}.

From Theorems \ref{monster for effective} and
 \ref{general statement vanishings},
for any effective divisor with $s\leq n+2$ 
and any only linearly obstructed divisor satisfying the bound 
\eqref{EffectivityCondition}, one obtains $\chi(\tilde{X}, \mathcal{O}_{\tilde{X}}(\tilde{D}))=\ldim(D)$. 
This gives a strong interpretation of the notion of
linear expected dimension that, not only represents a dimension count for the linear system $\LL$, 
 but also computes the Euler characteristic of the sheaf $\mathcal{O}_{\tilde{X}}(\tilde{D})$.
As a corollary of Theorem \ref{general statement vanishings}
we extend this Riemann-Roch formula to  larger classes of divisors obtaining interesting combinatorial identities. 
In particular, toric divisors sitting on the facets of the effective cone have Euler characteristic equal to one.

\medskip

This paper is organised as follows.
In Section \ref{blow-up construction} we introduce the general construction and notation.

 In Section \ref{Cremona of hyperplane class} we provide a cohomological 
classification of a class of interesting divisors, namely integer multiples of \emph{standard Cremona transformations} of the
hyperplane classes.

In Section \ref{vanishing section n+3} we  give an explicit description of the linear base locus of any divisor $D$, 
 Proposition \ref{sharpBLL}. 
In Theorem \ref{vanishing for >=n+3} we show that linear cycles are the only obstructions
 for divisors with  $s\leq n+2$ or 
satisfying \eqref{EffectivityCondition}. 

In Section \ref{section non-effective statements} we give vanishing theorems for the cohomology groups of divisors $D$ with $s\le n+2$ and with multiplicities bounded above by $d+1$, 
since in this range $\ldim$ is well-defined, see Theorem \ref{toric theorem} and 
Theorem \ref{monster for empty}.
In Theorem \ref{empty arbitrary points} we extend the result to the case of  non-effective divisors with 
$s\ge n+3$ points with multiplicities satisfying the bound \eqref{EffectivityCondition}.

 Section \ref{section non-effective proofs} is dedicated to the proofs of the results stated in
Section \ref{section non-effective statements} .

In Section \ref{star configuration} we use the vanishing theorems from Section \ref{section non-effective statements}
to study linear systems with points in special position.
 Theorem \ref{star}  computes the dimensions of a class of linear systems in $\PP^n$ interpolating \emph{star configurations} of points  with higher multiplicities.

\subsection{Acknowledgments} 
We would like to express our sincere gratitude to the referee for carefully reading this manuscript and for many
constructive comments that substantially helped improve the quality of this article. 

We also would like to thank
Chiara Brambilla, Ciro Ciliberto, Rick Miranda and
 Brian Osserman for useful discussions. 

 We also  thank the organisers of the Workshop on Perspectives and Emerging Topics in Algebra and Combinatorics
-PEAKs 2013 (Austria),  funded by DFG Conference Grant HA4383/6-1
for the hospitality during their stay that promoted and made this collaboration possible.

\section{Blowing-up: construction and notation}\label{blow-up construction}

In this section we recall the main construction that was partially presented in 
\cite[Sect. 4.1]{BraDumPos}.

Let $\mathcal{I}$ be a set of subsets of $\{1,\dots, s\}$. 
For every integer $0\leq r\le \min(n,s)-1$, we denote by $I(r)=\{i_1,\dots,i_{r+1}\}\in\mathcal{I}$ 
a \emph{multi-index} of length $|I(r)|=r+1$.
Let us also introduce the notation 
\begin{equation}\label{IIj}
\begin{split}
\mathcal{I}(r)&:=\{I(\rho)\in\mathcal{I}: 0\le \rho\le r\};\\ 
\mathcal{I}(r)_j&:=\{I(\rho)\in\mathcal{I}(r)\setminus\II(0): j\in I(\rho)\};\\
\mathcal{I}(r)_{j_1,j_2}&:=\{I(\rho)\in\mathcal{I}(r)\setminus\II(1): j_1,j_2\in I(\rho)\}.
\end{split}
\end{equation}

Let $p_1,\dots,p_s$ be general points in $\PP^n$ and, for every $I(r)\in \mathcal{I}$,
let 
 $L_{I(r)}\cong\PP^r$ denote the  $r$-dimensional linear subspace spanned by the points 
$\{p_j:\ j\in I(r)\}$, which we will refer to as a \emph{linear $r$-cycle}.
Notice that $L_{I(0)}=p_j$ is a point. An arbitrary multi-index will be denoted by $I$ without specifying its cardinality.

We will assume that $\mathcal{I}$ satisfies the following properties:
\begin{enumerate}
\item[(I)] $\{j\}\in\mathcal{I}$, for all $j\in\{1,\dots,s\}$;
\item[(II)] if $I\subset J$ and $J\in\mathcal{I}$, then $I\in\mathcal{I}$;
\item[(III)] if $I,J\in\mathcal{I}$, then $L_{I}\cap L_J=L_{I\cap J}$.
\end{enumerate}
Let $\Lambda=\Lambda(\mathcal{I})\subset\PP^n$ be
the subspace arrangement corresponding to $\mathcal{I}$, i.e.  the (finite) union of the
linear cycles $L_I$ with $I\in\mathcal{I}$.
Let $\bar{r}$ be the largest dimension of a linear cycle in $\Lambda$, 
i.e.\ $\bar{r}=\max_{I\in\mathcal{I}}(|I|)-1$. 
Write $\Lambda=\Lambda_{(1)}+\cdots+\Lambda_{(\bar{r})}$, 
where $\Lambda_{(r)}=\cup_{I(r)\in\mathcal{I}} L_{I(r)}$.

We denote by $\pi_{(0)}:X_{(0)}^n\to\PP^n$ the blow-up of $\PP^n$ at $p_1,\dots,p_s$, with $E_1,\dots,E_s$ exceptional divisors. 
Let us also consider the sequence of blow-up maps
\[
X_{(n-1)}^n\stackrel{\pi_{(n-1)}}{\longrightarrow} 
\cdots \stackrel{\pi_{(2)}}{\longrightarrow}X_{(1)}^n
\stackrel{\pi_{(1)}}{\longrightarrow}X_{(0)}^n,
\]
where $X_{(r)}^n\stackrel{\pi_{(r)}}{\longrightarrow}X_{(r-1)}^n$
is the blow-up of $X_{(r-1)}^n$ along 
the strict transform of $\Lambda_{(r)}\subset\PP^n$,
via $\pi_{(r-1)}\circ\cdots\circ\pi_{(0)}$.  
For any $I(r)\in\mathcal{I}$, we denote by
$E_{I(r)}$ the exceptional divisor of the cycle $L_{I(r)}$ in $X_{(r)}^n$.
Notice that conditions (I), (II) and (III) ensures that, for every $r$, the sum of the 
strict transforms of the exceptional divisors of $\pi_{(r)}$ is simple normal crossing. 
We will write, abusing notation,  $H$ for the strict transform in $X_{(r)}^n$
of $\mathcal{O}_{\PP^n}(1)$ and 
${E}_{I(\rho)}$, for $I(\rho)\in\mathcal{I}(r-1)$, for the strict transform in $X_{(r)}^n$
of the exceptional divisor $E_{I(\rho)}$ in $X^n_{(\rho)}$, respectively. 

\begin{remark}\label{permutohedron}
Notice that condition (III) is obviously satisfied when $s\le n+1$. In particular if $\II$ is the set of al subsets of $\{1,\dots,n+1\}$, then each map 
$\pi_{(r)}$ is the blow-up of the strict transforms of all coordinate $r$-dimensional subspaces of $\PP^n$. These are  toric morphisms, for every $r$. In particular the defining lattice polytope of $X_{(n-2)}$ is the so called \emph{permutohedron} studied by Kapranov in \cite{kapranov}. The space $X_{(n-2)}$ coincides with the Losev-Manin moduli space studied in \cite{LM}.
\end{remark}

\begin{remark}\label{blow up of hypersurface}
Notice that  the map $\pi_{(n-1)}:X_{(n-1)}^n\longrightarrow X_{(n-2)}^n$ is an isomorphism.  In particular, $\Pic(X_{(n-1)}^n)\cong (\pi_{(n-1)})^\ast \Pic(X_{(n-2)}^n)$. 
Thus, in our notation, for every $I(n-1)\in \mathcal{I}$ we have
 \begin{equation}\label{exc of hyperplane}
 E_{I(n-1)}=H-\sum_{\substack{I(\rho)\in\mathcal{I}(n-2),\\
 I(\rho)\subset I(n-1)}}E_{I(\rho)}\in \Pic(X^n_{(n-2)}).
 \end{equation}
 In other terms $E_{I(n-1)}$ denotes the strict transform on $X^n_{(n-2)}$ of the hyperplane 
 \begin{equation}\label{hyperplane}
 H_{I(n-1)}:=H-\sum_{i\in I(n-1)}E_i \in \Pic(X^n_{(0)}).
 \end{equation}
 For this reason we will sometimes  write 
 \begin{equation}\label{hyperplane tilde} 
 E_{I(n-1)}=\tilde{H}_{I(n-1)}\in\Pic (X^n_{(n-2)}).
 \end{equation}
 We  will use the first notation if we want to stress its nature as exceptional divisor, 
whereas we will use the second notation when we want to consider it as the strict 
transform of a hyperplane.

Finally, notice that $E_{I(n-1)}\cong X^{n-1}_{(n-3)}$, the blow-up of $\PP^{n-1}$ 
along all coordinate subspaces, in increasing dimension, i.e. the $(n-1)-$dimensional
toric variety discussed in Remark \ref{permutohedron}.

\end{remark}

The Picard group of $X^n_{(r)}$ is 
$$\textrm{Pic}(X^n_{(r)})=\langle H, E_{I(\rho)}: I(\rho)\in\II(r-1)  \rangle.$$

\begin{remark}\label{blow up preserves h^i}
For $r=1,\dots, n-1$ and $F$ a divisor on $X^n_{(r-1)}$, for any $i\geq 0$, we have
\[
h^i(X^n_{(r)}, (\pi_{(r)})^\ast F)=h^i(X^n_{(r-1)},F).
\]
It follows from Zariski connectedness theorem and by the projection formula (see for instance \cite{hartshorne}
or \cite[Lemma 1.3]{laface-ugaglia-BullBelg} for a more detailed proof.).
\end{remark}

\subsection{The geometry of the divisor $E_j$ in $X^n_{(r)}$}\label{exc point}
Let $r\ge 1$.
 Consider the composition of blow-ups 
$\pi_{(r,0)}:=\pi_{(r)}\circ\cdots\circ\pi_{(1)}:X^n_{(r)}\to X^n_{(0)}$.
By abuse of notation we will denote by $E_{j}$ 
 the strict transform $(\pi_{(r,0)})^{-1}_\ast E_j\in \textrm{Pic}(X^n_{(r)})$ of the exceptional divisor
 $E_{j}\in\textrm{Pic}(X^n_{(0)})$ 
of the point $p_j \in \PP^n$.

For every multi-index $I(\rho)\in\mathcal{I}(r)_j$,
let  $E_{I(\rho)}\in \textrm{Pic}(X^n_{(r)})$  be the strict transform of the exceptional 
divisor in $X^n_{(\rho)}$ of $L_{I(\rho)}$,  and set $e_{I(\rho)|j} := E_{I(\rho)}|_{E_{j}}$. Moreover, let $h$ be the hyperplane class of $E_j$.
\begin{lemma}\label{Pic of E_j}
In the above notation,
a basis for the  Picard group of $E_j$ is given by $h$ and $e_{I(\rho)|j}$, for all $I(\rho)\in\mathcal{I}(r)_j$. 

In particular, we have the isomorphism $\textrm{Pic}(E_j)\cong \textrm{Pic}(X^{n-1}_{(r-1)})$.
\end{lemma}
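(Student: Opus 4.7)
The plan is to establish both claims—the Picard basis description and the identification of $E_j$ with $X^{n-1}_{(r-1)}$ at the variety level (from which the Picard group isomorphism follows)—by induction on $r\geq 1$, tracking how the strict transform of $E_j$ behaves under each blow-up $\pi_{(\rho)}$. The central geometric observation is that each linear cycle $L_{I(\rho)}$ through $p_j$ meets $E_j\cong\PP^{n-1}$ transversally in a $(\rho-1)$-dimensional linear subcycle, namely the linear span of the direction points $q_{j,i}:=L_{\{i,j\}}\cap E_j$ for $i\in I(\rho)\setminus\{j\}$. Because the $p_i$ are general, the direction points $q_{j,i}$ form a general collection of points in $E_j\cong\PP^{n-1}$, so the subspace arrangement they induce is again of the type described in Section \ref{blow-up construction}, with an index set $\mathcal{I}'$ obtained from $\mathcal{I}_j$ by deleting the element $j$.

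For the base step $r=1$, the map $\pi_{(1)}$ blows up the lines $L_{\{i,j\}}$ with $\{i,j\}\in\mathcal{I}$. Each such line meets $E_j$ at the single point $q_{j,i}$ transversally, so the strict transform of $E_j$ in $X^n_{(1)}$ is the blow-up of $\PP^{n-1}$ at the points $q_{j,i}$, which is exactly $X^{n-1}_{(0)}$ for the induced configuration $\mathcal{I}'$; the Picard basis consists of $h$ together with the exceptional divisors $e_{\{i,j\}|j}$. For the inductive step, assuming the strict transform of $E_j$ in $X^n_{(r-1)}$ is identified with $X^{n-1}_{(r-2)}$, the map $\pi_{(r)}$ blows up the strict transforms of the $r$-cycles $L_{I(r)}\in\mathcal{I}$. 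Those with $j\in I(r)$ meet the strict transform of $E_j$ transversally along the strict transform of the $(r-1)$-cycle through the points $\{q_{j,i}:i\in I(r)\setminus\{j\}\}$, so the induced blow-up on $E_j$ adds exactly the exceptional divisors needed to pass from $X^{n-1}_{(r-2)}$ to $X^{n-1}_{(r-1)}$. Under this identification, $h\in\Pic(E_j)$ corresponds to the hyperplane class of $\PP^{n-1}$, and each $e_{I(\rho)|j}$ corresponds to the exceptional divisor of the cycle indexed by $I(\rho)\setminus\{j\}$ in $X^{n-1}_{(r-1)}$, from which the claimed basis and the isomorphism of Picard groups both follow.

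The main obstacle is the transversality and general-position check at each step: one must verify that the strict transform of $L_{I(r)}$ in $X^n_{(r-1)}$ meets the strict transform of $E_j$ along the expected $(r-1)$-cycle (rather than in something of higher dimension or with embedded components), and that the resulting intersection is smooth, so that blowing up and restriction commute. This ultimately follows from the general-position assumption on the $p_i$ together with condition (III) of Section \ref{blow-up construction}, which controls intersections of the linear cycles and guarantees that the blow-up centers remain in "general position" relative to $E_j$ at every stage. Once this transversality is established, the Picard group statement is a standard consequence of the blow-up formula for smooth centers (see also Remark \ref{blow up preserves h^i}).
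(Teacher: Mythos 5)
The paper offers no proof of this lemma (it is stated bare, with the construction deferred to \cite[Sect.\ 4.1]{BraDumPos}), so there is nothing to compare line by line; your inductive identification of the strict transform of $E_j$ with the iterated blow-up of $\PP^{n-1}\cong\PP(T_{p_j}\PP^n)$ along the traces $\langle q_{j,i}: i\in I(\rho)\setminus\{j\}\rangle$ of the cycles through $p_j$ is exactly the content of the explanatory paragraph that follows the lemma in the paper, and it is correct. The only points left implicit in your write-up are routine: cycles $L_{I(\rho)}$ with $j\notin I(\rho)$ miss $p_j$ by generality of the points, so their strict transforms are disjoint from $E_j$ and those blow-ups do not affect it; and the transversality you flag as the main obstacle is the standard statement for a subspace arrangement blown up in increasing order of dimension, granted by general position and condition (III).
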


Rephrasing Lemma \ref{Pic of E_j},  the exceptional divisor  $E_j\in \textrm{Pic}(X^n_{(r)})$ is isomorphic to a blown-up $\PP^{n-1}$ along linear $(\rho-1)$-cycles, $\rho\le r$, spanned by
subsets of a collection of $s-1$ general points.
These $s-1$ points correspond to the lines $L_{I(1)}$, with $I(1)=\{j,l\}$, for all indices
$l\in\{1,\dots,\hat{j},\dots,s\}$. Similarly, the linear $(\rho-1)$-cycles blown-up in $E_j$ correspond to the 
linear $\rho$-cycles $L_{I(\rho)}$ of $\PP^n$ satisfying the condition that $j\in I(\rho)$.

\subsection{The geometry of the divisor $E_{I(\rho)}$ in $X^n_{(r)}$}\label{geometry of exc}
Let $I=I(\rho)\in\II(r)$ be any multi-index.
Notice that if $\rho=0$,  $E_{I(\rho)}$ is the exceptional divisor of a point that was already described in Section \ref{exc point}.
Consider the composition of blow-ups 
$\pi_{(r,\rho)}:=\pi_{(r)}\circ\cdots\circ\pi_{(\rho+1)}:X^n_{(r)}\to X^n_{(\rho)}$. 
By abuse of notation we will denote by $E_{I}$ 
 the strict transform via $\pi_{(r,\rho)}$ in $X^n_{(r)}$ of the exceptional 
divisor $E_{I}\in\textrm{Pic}(X^n_{(\rho)})$ 
of a linear $\rho$-cycle $L_{I}\subset\PP^n$: $E_I\in \textrm{Pic}(X^n_{(r)})$ is a Cartesian product that we are now going to describe.

Consider first the case $\rho=r$. 
We have the following isomorphism: $E_I\cong X^r_{(r-2)} \times \PP^{n-r-1}$; we refer to \cite[Section 4]{BraDumPos} for details.
The Picard group of the first factor is generated by
 $\langle h, e_{I(t)}: I(t)\subset I, I(t)\in\II(r-2)\rangle,$

 where ${E_{I(t)}}{|_{E_I}}=:e_{I(t)}\boxtimes 0$,
while the Picard group of the second factor is generated by the hyperplane class.

Assume now that $0\le \rho < r$.  
Notice first of all that the restriction  ${E_{I(t)}}{|_{E_I}}$ of $X^n_{(r)}$ is zero on both
 factors unless one of the following 
 containment relations is satisfied: $I \subset I(t)$ or $I(t)\subset I$.
We denote by $h_b$ and $h_f$ the hyperplane classes of the two factors respectively. Moreover
 we introduce divisors $e_{I(t)}$ on the first factor and $e_{I(t)|I}$ on the second factor 
according to the following intersection table:

\begin{equation}\label{intersection table}
\begin{split}
{H}{|_{E_{I}}}&=: h_b \boxtimes 0;\\ 
{E_{I(t)}}{|_{E_I}}&=: e_{I(t)}\boxtimes 0, \textrm{ for all } I(t)\subset I,\ t\ge 0;\\ 
{E_{I(t)}}{|_{E_{I}}}&=: 0\boxtimes e_{I(t)|I}, \textrm{ for all } I \subset I(t), \ t\le r.
\end{split}
\end{equation}
Notice that if $\rho=0$, the first factor of $E_{I(\rho)}$ is a point and we have $h_f=h$ in the notation of Section \ref{exc point}.

\begin{remark}\label{exc of hyperplanes}
If $t=\rho-1$, and $I(t)\subset I$, i.e. $L_{I(\rho-1)}\subset\PP^n$ is a 
hyperplane of $L_I\subset \PP^n$, using \eqref{exc of hyperplane} we obtain the following equality:
$$
{E_{I(\rho-1)}}{|_{E_I}}={\left(H-\sum E_{I(\tau)}\right)}{|_{E_I}}=
\left(h-\sum e_{I(\tau)}\right)\boxtimes0,
$$
where the sums range over the multi-indices $I(\tau)\subset I$, 
$I(\tau)\in\II(\rho-2)$. 
Accordingly, $e_{I(\rho-1)}=h_b-\sum e_{I(\tau)}$.
A similar argument holds for divisors on the second factor, when $t=n-1$ and $I\subset I(t)$.
\end{remark}

\begin{lemma}\label{Picard of factors}
In the above notation, assume $\mathcal{I}$ is a set of multi-indices satisfying conditions \emph{(I),(II)} and \emph{(III)}.
We have
$$E_I\cong X^{\rho}_{(\rho-2)} \times X^{n-\rho-1}_{(r-\rho-1)}.$$

Moreover, bases for the Picard groups of the two factors of the product $E_{I}$, for $I\in\mathcal{I}$ of length $\rho+1$,
are given respectively by
\begin{align*} 
\langle h_b,&\ e_{I(t)}:I(t) \subset I, I(t)\in\II(\rho-2)\rangle;\\
\langle h_f,&\ e_{I(t)|I(\rho)}:I \subset I(t),\ I(t)\in\II\setminus\II(\rho) \rangle.
\end{align*}
\end{lemma}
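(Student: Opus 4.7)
I will track the evolution of $E_{I}$ through the sequence of blow-ups, starting from the stage $X^{n}_{(\rho)}$ in which it is first created, and ending at $X^{n}_{(r)}$. Throughout, condition (III) will be used to guarantee that intersections of linear cycles behave as expected, so that strict transforms of sub- and super-cycles are correctly identified.

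\emph{Step 1: Birth of $E_{I}$ in $X^{n}_{(\rho)}$.} The map $\pi_{(\rho)}:X^{n}_{(\rho)}\to X^{n}_{(\rho-1)}$ blows up the strict transform of $L_{I}$ in $X^{n}_{(\rho-1)}$; since the latter is smooth, $E_{I}$ is the projectivization of the normal bundle. The strict transform of $L_{I}\cong\PP^{\rho}$ in $X^{n}_{(\rho-1)}$ is obtained by blowing up all sub-cycles $L_{I(t)}$ with $I(t)\subsetneq I$; hyperplanes of $L_{I}$ (those with $t=\rho-1$) give isomorphisms when blown up within $L_{I}$, so only cycles of dimension $\le \rho-2$ contribute, producing $X^{\rho}_{(\rho-2)}$. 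Because the normal bundle of a linear $\rho$-cycle in $\PP^{n}$ is $\mathcal{O}(1)^{\oplus (n-\rho)}$, and the $\mathcal{O}(1)$ twist is absorbed on $X^{\rho}_{(\rho-2)}$ by the exceptional divisors via \eqref{exc of hyperplane}, the restricted normal bundle is trivial. Equivalently, the projection from $L_{I}$ resolves to a morphism onto $\PP^{n-\rho-1}$ whose exceptional divisor is a trivial $\PP^{n-\rho-1}$-bundle. Hence at this stage
\[
E_{I}\cong X^{\rho}_{(\rho-2)}\times \PP^{n-\rho-1}.
\]

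\emph{Step 2: Later blow-ups $\pi_{(t)}$, $\rho<t\le r$.} For each such $t$, $\pi_{(t)}$ blows up the strict transforms of the linear $t$-cycles $L_{I(t)}$. By general position and by (III), $L_{I(t)}$ meets the (strict transform of the) divisor $E_{I}$ nontrivially precisely when $I\subset I(t)$. In this case the intersection $L_{I(t)}\cap E_{I}$ equals $\{\mathrm{pt}\}\times L'$, where $L'\subset \PP^{n-\rho-1}$ is the image of $L_{I(t)}$ under the projection from $L_{I}$, a linear cycle of dimension $t-\rho-1$. Inductively, these blow-ups affect only the second factor of $E_{I}$; after all of them have been performed up to dimension $r$, the second factor is $\PP^{n-\rho-1}$ with all linear cycles of dimension $\le r-\rho-1$ (coming from multi-indices $I\subsetneq I(t)$, $t\le r$) blown up, which is by definition $X^{n-\rho-1}_{(r-\rho-1)}$. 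This establishes the claimed product decomposition.

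\emph{Step 3: Picard group bases.} With the product structure in place, I extract bases by restricting the generators of $\mathrm{Pic}(X^{n}_{(r)})=\langle H,E_{I(t)}:I(t)\in\II(r-1)\rangle$ to $E_{I}$, according to the intersection table \eqref{intersection table} and Remark \ref{exc of hyperplanes}. The divisor classes restricting to the first factor ($h_{b}$ and the $e_{I(t)}$ with $I(t)\subset I$, $I(t)\in\II(\rho-2)$) give exactly the generators of $\mathrm{Pic}(X^{\rho}_{(\rho-2)})$, and symmetrically the ones restricting to the second factor give the generators of $\mathrm{Pic}(X^{n-\rho-1}_{(r-\rho-1)})$. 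Linear independence follows from the fact that they are pullbacks of standard generators under projection onto each factor.

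\emph{Main obstacle.} The delicate point is Step 1, namely the rigorous verification that the normal bundle of the strict transform of $L_{I}$ in $X^{n}_{(\rho-1)}$ is trivial in the direction transverse to $L_{I}$. This requires careful bookkeeping of how each preceding blow-up of a sub-cycle $L_{I(t)}\subset L_{I}$ twists the conormal sheaf, and invoking \eqref{exc of hyperplane} to see that the $\mathcal{O}(1)$-twist is precisely cancelled. The subsequent steps are essentially formal consequences of the product structure, the intersection table, and the compatibility of blow-ups with restriction to a smooth divisor.
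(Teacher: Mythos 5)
The paper states this lemma without proof (it is folded into the blow-up construction, with the normal-bundle computation deferred to \cite[Lemma 4.3]{BraDumPos}), so your strategy --- tracking $E_I$ from its creation at stage $\rho$ and checking that every later center meets it compatibly with the product structure --- is the natural and intended one, and Step 3 is fine. There is, however, one mis-statement that breaks the argument as written. In Step 2 you assert that the strict transform of $L_{I(t)}$ (for $I\subset I(t)$) meets $E_I$ in $\{\mathrm{pt}\}\times L'$. This is dimensionally wrong for $\rho\ge 1$: the intersection is the exceptional divisor of $\tilde L_{I(t)}\to L_{I(t)}$ lying over the \emph{whole} strict transform of $L_I$, namely $(\text{first factor})\times L'$ with $L'\cong\PP^{t-\rho-1}$ the image of $L_{I(t)}$ under projection from $L_I$. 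This is not a pedantic point: blowing up a center of the form $\{\mathrm{pt}\}\times L'$ would destroy the product structure, whereas blowing up $(\text{first factor})\times L'$ is exactly what yields $(\text{first factor})\times\Bl_{L'}(\cdot)$ and hence justifies your own next sentence that the later blow-ups ``affect only the second factor''. The induction does not close without the correct description of the center.

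Two smaller points. In Step 1 the normal bundle of the strict transform of $L_I$ in $X^n_{(\rho-1)}$ is not trivial: by Lemma \ref{normal bundle} (equivalently \cite[Lemma 4.3]{BraDumPos}) the conormal bundle is $\OO(\Cr_\rho(h_b))^{\oplus(n-\rho)}$, and $\Cr_\rho(h_b)$ is a nontrivial class on $X^{\rho}_{(\rho-2)}$. What you actually need --- and what is true --- is only that it is a direct sum of $n-\rho$ copies of a single line bundle, so that its projectivization is a trivial $\PP^{n-\rho-1}$-bundle. Finally, you invoke condition (III) only in passing; it is worth spelling out that for centers $L_{I(t)}$ with $I\not\subset I(t)$ and $I(t)\not\subset I$, the identity $L_I\cap L_{I(t)}=L_{I\cap I(t)}$ and the fact that $L_{I\cap I(t)}$ is blown up at an earlier stage force the strict transform of $L_{I(t)}$ to be disjoint from that of $L_I$, hence from $E_I$; this is precisely where (III) enters. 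With these corrections the argument is sound.
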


The element $e_{I(t)}$ represents the (strict transform of the) exceptional divisor in $X^{\rho}_{(\rho-2)}$ of 
a linear $t-$cycle of $\PP^\rho$ and $e_{I(t)|I(\rho)}$
represents the  (strict transform of the)  exceptional divisor in $X^{n-\rho-1}_{(r-\rho-1)}$ of
a linear $(t-\rho-1)-$cycle of $\PP^{n-\rho-1}$, spanned  by $|I(t)\setminus I(\rho)|$ points.

\begin{remark}\label{first factor is toric}
In the above notation,  using the assumptions (I),(II) and (III), one can see that there are exactly  $\rho+1$ exceptional divisors of the form $e_{I(0)}$ on the first factor $X^{\rho}_{(\rho-2)}$ of $E_I$. They  can be thought as the exceptional divisors of the $\rho+1$ coordinate points of $\PP^\rho$. In the same way the $e_{I(t)}$'s, for $t\ge1$, represent the (strict transforms of the) exceptional divisors of $t-$dimensional coordinate subspaces. Hence $X^{\rho}_{(\rho-2)}$ is the 
$\rho-$dimensional toric variety discussed in Remark \ref{permutohedron}.
\end{remark}

We now give a characterisation of the normal bundle of the exceptional divisor $E_I$ in the space $X^n_{(r)}$. To this purpose, we introduce the following divisor on $X^{\rho}_{(\rho-2)}$:
\begin{equation}\label{cremona}
\Cr_\rho(h_b):=\rho h_b-\sum_{\substack{I(t)\subset I\\ I(t)\in\II(\rho-2)}}(\rho-t-1)e_{I(t)}.
\end{equation}
We will give a detailed cohomological description of such a divisor in Section \ref{Cremona of hyperplane class}.

\begin{lemma}\label{normal bundle}
In the notation above, we have
$$
-{E_I}{|_{E_I}}=\Cr_\rho(h_b)\boxtimes h_f.
$$
\end{lemma}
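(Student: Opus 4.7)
The strategy is to exploit the product decomposition $E_I\cong F_1\times F_2$ with $F_1=X^\rho_{(\rho-2)}$ and $F_2=X^{n-\rho-1}_{(r-\rho-1)}$ from Lemma~\ref{Picard of factors}, and then to solve for $E_I|_{E_I}$ via the adjunction identity $K_{E_I}=(K_{X^n_{(r)}}+E_I)|_{E_I}$. Since $\Pic(E_I)\cong p_1^*\Pic(F_1)\oplus p_2^*\Pic(F_2)$, every class on $E_I$ splits uniquely as a sum of pullbacks, and it suffices to determine the first-factor and second-factor components of $-E_I|_{E_I}$ separately.

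To compute $K_{E_I}$ I would use K\"unneth together with the iterated canonical-of-a-blow-up formula on each factor, obtaining
\[
K_{F_1}=-(\rho+1)h_b+\sum_{\substack{I(t)\subset I\\ I(t)\in\II(\rho-2)}}(\rho-t-1)e_{I(t)},
\]
and the symmetric expression for $K_{F_2}$ in terms of $h_f$ and the classes $e_{I(t)|I}$ for $I\subsetneq I(t)$. For the other side of adjunction, I would expand $K_{X^n_{(r)}}=-(n+1)H+\sum_{I(t)\in\II(r)}(n-t-1)E_{I(t)}$ and restrict via the intersection table~\eqref{intersection table} together with Remark~\ref{exc of hyperplanes}: each summand lands entirely on the first factor if $I(t)\subset I$, entirely on the second factor if $I\subset I(t)$, and vanishes otherwise. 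Isolating $(n-\rho)E_I|_{E_I}$ and collecting coefficients, the second-factor contributions collapse to $-(n-\rho)h_f$, while on the first factor the coefficient of $h_b$ reduces to $-\rho(n-\rho)$ and the coefficient of each $e_{I(t)}$ reduces to $(n-\rho)(\rho-t-1)$. Dividing by $-(n-\rho)$ and comparing with~\eqref{cremona} produces precisely $-E_I|_{E_I}=\Cr_\rho(h_b)\boxtimes h_f$.

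The main obstacle is the bookkeeping for the $\rho+1$ hyperplane contributions coming from $L_{I(\rho-1)}\subset L_I$. By Remark~\ref{exc of hyperplanes} each such hyperplane restricts on $E_I$ to a class of the form $(h_b-\sum e_{I(\tau)})\boxtimes 0$, and a short binomial identity---every $e_{I(t)}$ with $t\le\rho-2$ is contained in exactly $\rho-t$ of these hyperplanes---must be invoked for the coefficients of the various $e_{I(t)}$ to collapse into the compact Cremona form $\rho h_b-\sum(\rho-t-1)e_{I(t)}$. An analogous argument handles the symmetric hyperplane contributions on the second factor when $r=n-1$.
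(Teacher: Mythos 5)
Your argument is correct, and it reaches the conclusion by a genuinely different route from the paper. The paper's proof is a one-line citation: it quotes the computation of the conormal bundle of the first factor from \cite[Lemma 4.3]{BraDumPos}, $N^\ast_{X^{\rho}_{(\rho-2)}|X^n_{(r)}}=\mathcal{O}(\Cr_\rho(h_b))$, so that $E_I$ is a (trivialized) projective bundle over $X^\rho_{(\rho-2)}$ and $-E_I|_{E_I}$ is read off as the relative $\mathcal{O}(1)$ twisted by $\Cr_\rho(h_b)$; that approach simultaneously \emph{explains} the product structure of $E_I$ and its normal bundle, but relies on an external lemma. You instead take the product decomposition of Lemma \ref{Picard of factors} as given and solve for $E_I|_{E_I}$ by adjunction, using that $E_I$ itself occurs in $K_{X^n_{(r)}}$ with coefficient $n-\rho-1$, so that $K_{E_I}-K_{X^n_{(r)}}|_{E_I}$ isolates $(n-\rho)E_I|_{E_I}$ with $n-\rho\ge 1$ (since $\rho\le r\le n-1$), and everything else is computable from the intersection table \eqref{intersection table} and Remark \ref{exc of hyperplanes}. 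I checked your bookkeeping: each $e_{I(t)}$ with $t\le\rho-2$ lies on exactly $\rho-t$ of the $\rho+1$ restricted hyperplane classes $e_{I(\rho-1)}=h_b-\sum e_{I(\tau)}$ (a hyperplane of $L_I$ omits one index of $I$, and it contains $L_{I(t)}$ precisely when the omitted index avoids $I(t)$), which gives first-factor coefficients $-(\rho+1)+(n+1)-(n-\rho)(\rho+1)=-\rho(n-\rho)$ for $h_b$ and $(\rho-t-1)-(n-t-1)+(n-\rho)(\rho-t)=(n-\rho)(\rho-t-1)$ for $e_{I(t)}$, while the second factor contributes $-(n-\rho)h_f$; dividing by $-(n-\rho)$ recovers exactly \eqref{cremona}. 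The trade-off is that your proof is self-contained within this paper's toolkit but must assume the splitting $\Pic(E_I)=p_1^\ast\Pic(F_1)\oplus p_2^\ast\Pic(F_2)$ from Lemma \ref{Picard of factors}, whereas the paper's cited conormal-bundle computation is what justifies that splitting in the first place.
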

\begin{proof}
The proof follows from the computation of the conormal bundle of the first factor of $E_I$: $$N^\ast_{X^{\rho}_{(\rho-2)}|X^n_{(r)}}=\mathcal{O}_{X^{\rho}_{(\rho-2)}}(\Cr_\rho(h_b)).$$ See \cite[Lemma 4.3]{BraDumPos} for details.
\end{proof}

\subsection{Restriction exact sequences}\label{sequences}
Given a divisor $F\in \Pic(X^n_{(r)})$, we will consider three types of restriction exact sequences.

\subsubsection*{Restriction to the exceptional divisor of a point}
In the same notation as above, let $E_1$ be the strict transform in $X^n_{(r)}$ of the exceptional divisor of the point $p_1\in\PP^n$.
We will call \emph{sequence of type (A)} the following restriction sequence:
\begin{equation*}\label{seq A}
 0\longrightarrow F-E_1\longrightarrow  F \longrightarrow
 {F}{|_{E_1}}\longrightarrow 0.
\tag{$A$}
\end{equation*}

\subsubsection*{Restriction to the exceptional divisor of a  linear cycle}
Fix integers $1\le \rho\le r\le n-2$.
In the same notation as above, let $E_{I(\rho)}$ be the strict transform in $X^n_{(r)}$ of the exceptional divisor of the linear subspace of $\PP^n$ spanned by the points parametrised by $I(\rho)$.
We  call \emph{sequence of type (B)} the following restriction sequence:
\begin{equation*}\label{seq B}
 0\longrightarrow F-E_{I(\rho)}\longrightarrow  F \longrightarrow
 {F}{|_{E_{I(\rho)}}}\longrightarrow 0.
\tag{$B$}
\end{equation*}

\subsubsection*{Restriction to hyperplanes}
Let now $F$ be a divisor on $X^n_{(n-1)}\cong X^n_{(n-2)}$ and let $I(n-1)$ be the index set parametrising a hyperplane of $\PP^n$ spanned by $n$ points. 
Let $\tilde{H}_{I(n-1)}$ be the strict transform in $X^n_{(n-2)}$ of such a hyperplane, 
cfr. \eqref{exc of hyperplane} and \eqref{hyperplane tilde}.
We will call   \emph{sequence of type (C)} the following restriction sequence:
\begin{equation*}\label{seq C}
 0\longrightarrow F-\tilde{H}_{I(n-1)}\longrightarrow  F \longrightarrow
 {F}{|_{\tilde{H}_{I(n-1)}}}\longrightarrow 0.
\tag{$C$}
\end{equation*}

For each of the above sequences, it will be possible to study the restricted divisor by means of the intersection table \eqref{intersection table}.

\section{Standard Cremona transformations of hyperplane classes}\label{Cremona of hyperplane class}

We  recall that the \emph{standard Cremona transformation} of $\PP^n$, based at the $n+1$ coordinate points, is the birational transformation defined by the 
following rational map:
$$
\Cr: (x_0 : \dots : x_n) \to  (x_0^{-1}:\cdots:x_n^{-1}),
$$
where $x_0,\dots,x_n$ are homogeneous coordinates of $\PP^n$.
This map induces an action on the Picard group of $\PP^n$  blown-up at $s$ points, 
$X^n_{(0)}$. Without loss of generality we may assume that an effective divisor $D$ of the form \eqref{general divisor} is based on the $n+1$ coordinate points and other general points of the projective space and we label their corresponding exceptional divisors by
$E_1, \ldots, E_{n+1}, E_{n+2},\dots,E_s$. The Cremona action on the divisor 
$D$ is described by the following rule (see e.g. \cite{Dolgachev}). Set
$$D=dH-\sum_{i=1}^{s}m_i E_{i}, \quad c:=(n-1)d-\sum_{i=1}^{n+1}m_i.$$
Then 
$$
\Cr(D)=(d+c)H-\sum_{i=1}^{n+1}(m_i+c)E_i-\sum_{i=n+2}^{s}m_iE_i.
$$ 
In the case $n=3$,  $\Cr$ is often called the
 \emph{cubo-cubic} Cremona transformation, see for instance
\cite{laface-ugaglia-TAMS}.

\medskip

From now on throughout this section, $\mathcal{I}$ will denote the set of all subsets of  $\{1,\ldots, n+1\}$. Notice that 
$\mathcal{I}$ satisfies conditions (I), (II) and (III) of Section \ref{blow-up construction}, cfr. Remark \ref{permutohedron}.
Moreover $X^n_{(n-1)}\cong X^n_{(n-2)}$ will be the blow-up of $\PP^n$ along all linear subspaces parametrised by elements of $\II(n-2)$, i.e. along all coordinate subspaces of $\PP^n$, in increasing dimension.

The divisors \eqref{cremona}, that naturally arise in the blowing-up  construction, are the strict transforms in $X^n_{(n-1)}$  via the standard Cremona transformations of the strict transform $H$ of the hyperplane class $\mathcal{O}_{\PP^n}(1)$, where we abbreviate the notation for the strict transform in $X^n_{(n-1)}$ of $\Cr(H)$ by 
\begin{equation}\label{cremona divisor}
\Cr_n(H)=nH-\sum_{I(\rho)\in\II(n-2)}(n-\rho-1)E_{I(\rho)}.
\end{equation}
This is the divisor \eqref{cremona}  obtained by replacing 
$\rho$ by $n$, $t$ by $\rho$, $I$ by $\{1,\dots, n+1\}$ and $h_b$ by $H$.

In this section we compute all cohomologies of any multiple of Cremona transformations of hyperplane classes. In
particular we show that they have the same cohomological behaviour as the same 
multiples of the hyperplane classes. 

\medskip 

Recall that the canonical divisor of the blown-up projective space $X_{(n-1)}^n$ is
\[K_{X^{n}_{(n-1)}}=-(n+1)H+\sum_{I(\rho)\in\II(n-2)}(n-\rho-1)E_{I(\rho)}\]\
and notice that 
$$K_{X^{n}_{(n-1)}}+ \Cr_{n}(H)=-H.$$

\begin{theorem}\label{vanishing cremona} 
For any integer $a$, we have that $h^i(X^n_{(n-1)},\mathcal{O}_{X^n_{(n-1)}}(a\Cr_n(H)))=h^i(\PP^n, \mathcal{O}_{\PP^n}(a))$.

 In particular, we have that $h^i(X^n_{(n-1)},\mathcal{O}_{X^n_{(n-1)}}(-\alpha\Cr_n(H)))=0$, for $1\le \alpha\le n$, $i\ge 0$.
\end{theorem}
\begin{proof}
If $a=0$ the statement is obvious. 

Assume  that $a\ge1$.
It is known that (see for example \cite[Theorem 3]{Dumnicki})
$h^0(a\Cr_n(H))=h^0(aH)$.
Moreover, the effective divisor $a\Cr_n(H)$ on $X^n_{(n-1)}$
 is not obstructed, that is $h^i(a\Cr_n(H))=0$, for all $i\ge1$, see 
\cite[Theorem 4.6]{BraDumPos}.

Assume now that $a\le -1$ and denote $a=-\alpha$, where $\alpha$ is a positive integer.

Case $\alpha\geq n+1.$
Recall that $h^i(\PP^n, \mathcal{O}(-\alpha))=0$ for $i\neq n$ and that, by Serre duality, $h^n(\PP^n, \mathcal{O}(-\alpha))=h^{0}(\PP^n,\mathcal{O}(\alpha-n-1))>0$.
Notice that, on the blown-up space $X^n_{(n-1)}$, we have the following:
\begin{align*}
h^i(-\alpha \Cr_n(H)))&=h^{n-i}((\alpha-1)\Cr_{n}(H)-H)\\
&=h^{n-i}\left([(\alpha-1)n-1]H-(\alpha-1)\sum_{I(\rho)\in\II(n-2)}(n-\rho-1)E_{I(\rho)}\right),
\end{align*}
where the first equality follows from Serre duality and the second equality is just the expanded form of  $(\alpha-1)\Cr_{n}(H)-H$.
We claim that the cohomologies vanish for all $i\ne n$. To show this, we
notice that the divisor 
$$((\alpha-1)n-1)H-(\alpha-1)(n-1)\sum_{i=1}^{n+1}E_{i}$$ on $X^n_{(0)}$ is effective and that each cycle $L_{I(\rho)}$ is contained in its base locus with multiplicity 
$k_{I(\rho)}=(\alpha-1)(n-\rho-1)+\rho$,  by \cite[Lemma 2.1]{BraDumPos}. Each integer $k_{I(\rho)}$ differs from the coefficient of $E_{I(\rho)}$ in the above expression by $\rho$. Hence the vanishing of the $i-$th cohomology group, for all $i\neq n$, follows by  \cite[Theorem 4.6]{BraDumPos}.
If $i=n$, notice that
$$h^n\left(X^n_{(n-1)},-\alpha \Cr_n(H)\right)=h^{0}\left(X^n_{(0)}, ((\alpha-1)n-1)H-(\alpha-1)\sum_{j=0}^{n+1}(n-1)E_{j}\right).$$
We compute the number of global sections by performing a standard Cremona transformation in $\PP^n$, which preserves that number:
\begin{align*}
h^n\left(X^n_{(n-1)},a \Cr_n(H)\right)&=h^{0}\left(X^n_{(0)},(\alpha-n-1))H+(n-1)\sum_{j=0}^{n+1}E_{j}\right)\\
&=h^{0}\left(X^n_{(0)},(\alpha-n-1)H\right).
\end{align*}

Case $1\le \alpha\le n$.  We prove the statement by induction on $n$ and $\alpha$.

The base steps $n=2$, $\alpha=1,2$ are easily verified by means of Serre duality. Indeed, 
as the canonical divisor of $X^2_{(0)}$ is $K=-3H+E_1+E_2+E_3$, we have $h^i(-\Cr_2(H))=h^{2-i}(-H)=0$ and $h^i(-2\Cr_2(H))=h^{2-i}(H-E_1-E_2-E_3)=0$.

Fix $n\ge 3$ and recall from \eqref{IIj} that $\mathcal{I}(n-2)_1$ denotes the set of all subsets of $\{1,\dots,n\}$ of cardinality at most $n-1$ containing $\{1\}$ as a proper subset.
Using \eqref{exc of hyperplane}, we compute the following equality:
\begin{align*}
-\sum_{1\in I(n-1)}E_{I(n-1)}=&-nH+ 
\sum_{I(\rho)\in\II(n-2)_1}(n-\rho)E_{I(\rho)}\\
&+
\sum_{I(\rho)\in\II(n-2)\setminus\II(n-2)_1}(n-\rho-1)E_{I(\rho)}
\\= & -\Cr_n(H)+E_1+\sum_{I(\rho)\in\II(n-2)_1}E_{I(\rho)}.
\end{align*}
For $1\le r\le n-1$, and $I=I(r)$ we recall from \eqref{intersection table} that 
$E_{I(\rho)}|_{E_{I}}=e_{I(\rho)}\boxtimes0$ if $I(\rho)\subset I$ and that
$E_{I(\rho)}|_{E_{I}}=0\boxtimes\ast$ if $I(\rho)\nsubseteq I$, where we use $\ast$ to denote
 the appropriate divisor, as we are only interested in the first factor.
Hence, the above computation and Remark \ref{exc of hyperplanes} show that
$$-\sum_{1\in I(r-1)}E_{I(r-1)}|_{E_{I}}=\left(
-\Cr_r(h)+e_1+\sum_{I(\rho)\in\II(r-2)_1}e_{I(\rho)}\right)\boxtimes\ast.
$$
Therefore, 
\begin{equation}\label{cremona on restriction}
\left(-E_1-\sum_{I(\rho)\in\II(r-1)_1}{E_{I(\rho)}}\right){|_{E_I}}=-\Cr_r(h)\boxtimes*.
\end{equation}

Now, for every integer $r$ such that $0\le r\le n-1$, we will consider the ordered set  of all multi-indices 
of length $r+1$ that contain $\{1\}$. We will denote it by $\{I(r)_0,\dots, I(r)_{s_r}\}\subset\mathcal{I}(n-1)_1$, where $s_r+1$ is its cardinality.
Notice that for $r=0$ we have $s_0=0$ and the only set that we consider is the singleton $\{1\}$. 
Let $\prec$ be the lexicographical order on $\II(n-1)_1$ defined as follows: 
$I(r')_{j'}\prec I(r)_{j}$ if and only if $r'<r$ or $r'=r$ and $j'<j$.

In the space $X^n_{(n-1)}$ we consider the divisors $F(r,j)$ defined by recursion starting from $F:=0$ as follows:

\begin{align*}
F(0,0)=&\ F-E_1,\\
F(r,0)=&\ F(r-1,s_{r-1})-E_{I(r)_0}, \ 1\le r\le n-1,\\
F(r,j)=&\ F(r,j-1)-E_{I(r)_j}, \ 1\le j\le s_r.
\end{align*}
Notice that the last divisor is $F(n-1,s_{n-1})=-\Cr_n(H)$.

The divisor $F(0,0)$ is obtained from the divisor $F$ as the kernel of an exact sequence of type \eqref{seq A}. 
For $1\le r\le n-2$ each divisor $F(r,j)$ is obtained from the previous 
one as kernel of a sequence of type \eqref{seq B} and for $r=n-1$ from a sequence of type 
\eqref{seq C}, cfr. Section \ref{sequences}.

Precisely, we consider the following exact sequences of sheaves, performed following the order $\prec$ on $\II(n-1)_1$:
\begin{align*}
0\to F(0,0)\to F\to{F}{|_{E_1}}\to0,& \\
0\to F(r,0)\to F(r-1,s_{r-1})\to{F(r-1,s_{r-1})}{|_{E_{I(r)_1}}}\to0,& \quad 1\le r\le n-1\\
0\to F(r,j)\to F(r,j-1)\to{F(r,j-1)}{|_{E_{I(r)_{j}}}}\to0,& \quad 1\le j\le s_r.
\end{align*}

The divisor $-E_1$ has vanishing cohomologies, for all $n\ge1$.
Moreover, in all sequences the restricted divisor is of the form 
\eqref{cremona on restriction} and has therefore vanishing cohomologies, 
by induction on $n$, using the  Kunneth formula for the cohomology of factors. This implies that 
$H^i(-\Cr_n(H))=0$, $i\ge 0$, concluding the proof of the statement in the case $\alpha=1$.

We are left to prove the vanishing theorems for $-\alpha\Cr_{n}(H)$, $2\le \alpha\le n$.
For $\alpha\ge 2$, we assume the statement true for $\alpha-1$. We apply the recursive restriction procedure as above. Setting $F:=-(\alpha-1)\Cr_n(H)$, we get $F(n-1,s_{n-1})=-\alpha\Cr_n(H)$.

We first consider the first sequence, the restriction to $E_1$. Notice that the space $E_1\cong X^{n-1}_{(n-2)}$ is the $(n-1)-$dimensional toric variety  described in Remark \ref{permutohedron}.
Now, we first notice  that  $-\Cr_n(H){|_{E_1}}=-\Cr_{n-1}(h)$ on $E_1\cong X^{n-1}_{(n-2)}$.
 Indeed,
$$
-nH+(n-\rho-1)\sum_{I(\rho)\in\II(n-2)}{E_{I(\rho)}}{|_{E_1}}=
-(n-1)h+(n-\rho-1)\sum_{I(\rho)\in\II(n-2)\setminus\II(1)}{e_{I(\rho)|1}}.
$$
Therefore, the restricted divisor of the first sequence, that is  $-(\alpha-1)\Cr_n(H){|_{E_1}}=-(\alpha-1)\Cr_{n-1}(h)$ on $E_1$,
has vanishing cohomologies by induction on $n$, for all $\alpha\le n-1$.

Moreover, a computation similar to that preceding  \eqref{cremona on restriction} 
shows that, for all $I=I(r)_j$,  $-{\Cr_{n}(H)}{|_{E_I}}=0\boxtimes*$.
Therefore, for every pair $(r,j)$ the restriction of the corresponding sequence is
$$
\left(-(\alpha-1)\Cr_n(H)-E_1-\sum_{\substack{I(\rho)\prec I \\ I(\rho)\in\II(n-1)_1}}E_{I(\rho)}\right){|_{E_I}},
$$
that equals the divisor on the right hand side of \eqref{cremona on restriction}. 
It has vanishing cohomologies by the argument above (case $\alpha=1$).
This concludes the proof.
\end{proof}

\section{Vanishing theorems for effective only linearly obstructed  divisors}\label{vanishing section n+3}

Let $\LL=\LL_{n,d}(m_1,\dots,m_s)$  be a linear system of hypersurfaces of degree $d$ of $\PP^n$ interpolating $s$ points with assigned  multiplicities $m_i$'s.
 Elements of $\LL$ are in bijection with divisors 
on $X_{(0)}^n$  of the  form \eqref{general divisor},
$$
 D:= dH-\sum_{i=1}^s m_i E_i.
$$

Assume that the following bound is satisfied: 
\begin{equation}\label{EffectivityCondition}
\sum_{i=1}^s m_i-nd\le \min(n-s(d),s-n-2), \quad 1\le m_i\le d,
\end{equation}
where the integer $s(d)$ denotes the number of indices $i$ such that $m_i=d$.
 
When a given list of integers $(d,m_1,\dots,m_s)$ satisfies condition \eqref{EffectivityCondition}, if $D$ is the divisor defined by these integers as in \eqref{general divisor}, we will say that $D$ \emph{satisfies condition} \eqref{EffectivityCondition}.

Condition \eqref{EffectivityCondition} is sufficient condition for $D$  to be effective 
(and for $\ls$ to be non-empty), see
Theorem \ref{theorem a n+3}.

In \cite[Section 5]{BraDumPos}  the dimensions of all linear systems $\ls$ in $\PP^n$  (equivalently
the number of basis elements of global sections of the line bundles associated to the divisors $D$) 
that satisfy the bound \eqref{EffectivityCondition} are given, see also Theorem \ref{theorem a n+3}.
In this section we compute the dimension of all higher cohomology groups of their strict transforms in the blown-up spaces $X_{(r)}$, using the notation introduced in Section \ref{blow-up construction}.

\begin{theorem}\label{vanishing for >=n+3}
Statements $(a)$ and $(b)$ of Theorem \ref{monster for effective} hold for divisors satisfying condition 
\eqref{EffectivityCondition}.
\end{theorem}

This result implies  that such linear systems are only linearly obstructed.
Moreover this (partially) answers Question \ref{question}
for divisors satisfying \eqref{EffectivityCondition}.

\medskip

This section is organised as follows. 
In Section \ref{linear base locus lemma} we prove a base locus lemma, Proposition \ref{sharpBLL},
 that computes the exact multiplicity of containment 
of a linear cycle in the base locus of a linear system $\ls$. Section \ref{Vanishing theorems section} contains the proof of Theorem \ref{vanishing for >=n+3}.

\subsection{Linear base locus lemma}\label{linear base locus lemma}

Let $\II$ be the set of all subsets of $\{1,\dots,s\}$. 

For all $I(r)=\{i_1,\dots,i_{r+1}\}\in \II$ with $0\le r\le \min(n,s)-1$, we introduce the integers
 (cfr. Definition \eqref{mult k}):
\begin{align*}
K_{I(r)}=K_{i_1,\dots,i_{r+1}}&:=\sum_{i\in I(r)}m_i -rd;\\
k_{I(r)}=k_{i_1,\dots,i_{r+1}}&:=\max (K_{I(r)},0 ).
\end{align*}
Moreover we introduce the integer  
$$ \bar{r}=\bar{r}(\LL)=\bar{r}(D):=\max(\rho|K_{I(\rho)}> 0),$$ 
and  the following subset of $\II$:
\begin{equation}\label{II}
\II^{>}=\II^{>}(\ls)=\II^{>}(D):=\{I(r)\subseteq\{1,\dots,s\}:0\le r\le n-1, K_{I(r)}>0\}.
\end{equation}

\begin{proposition}\label{sharpBLL}
Let $\LL=\LL_{n,d}(m_{1},...,m_{s})$ be a non-empty linear system. Let 
$I(r)\in\II^>$. The linear cycle $L_{I(r)}$ spanned by the points 
parametrised by $I(r)$ is contained with multiplicity $k_{I(r)}$
in the base locus of $\LL$. 
\end{proposition}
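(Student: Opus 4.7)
The statement splits into a lower bound --- every $D\in\LL$ vanishes along $L_{I(r)}$ to order at least $k_{I(r)}=K_{I(r)}$ --- and a matching upper bound achieved by some $D\in\LL$.

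For the \textbf{lower bound} I argue by a direct Taylor-type computation. Choose homogeneous coordinates $[x_0:\cdots:x_n]$ so that $L_{I(r)}=\{x_{r+1}=\cdots=x_n=0\}$ and $p_{i_j}=e_{j-1}$ for $j=1,\ldots,r+1$. Write a defining polynomial of a general $D\in\LL$ as $F=\sum c_\alpha x^\alpha$ with $|\alpha|=d$. The vanishing condition $\ord_{p_{i_j}}(F)\ge m_{i_j}$ forces $c_\alpha=0$ whenever $\sum_{k\neq j-1}\alpha_k<m_{i_j}$. Summing the $r+1$ inequalities $\sum_{k\neq j-1}\alpha_k\ge m_{i_j}$ and using $\sum_{k=0}^n\alpha_k=d$ yields
\[
\sum_{k=r+1}^{n}\alpha_k\ \ge\ \sum_{j=1}^{r+1}m_{i_j}-rd=K_{I(r)},
\]
which is precisely the order of vanishing of the monomial $x^\alpha$ along $L_{I(r)}$. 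Hence $\ord_{L_{I(r)}}(F)\ge K_{I(r)}=k_{I(r)}$.

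For the \textbf{upper bound} I must exhibit $D\in\LL$ with multiplicity exactly $K_{I(r)}$ along $L_{I(r)}$. When $s\le n+2$ this is the result quoted as ``Case (i)'' from \cite[Proposition 2.5]{BraDumPos}, which I reproduce for completeness. When $s\ge n+3$ I reduce to the small-$s$ situation by induction on $s$. Pick an index $j_0\in\{1,\ldots,s\}\setminus I(r)$ (possible since $r\le n-1<s-1$) and set $\LL'=\LL_{n,d}(m_1,\ldots,\widehat{m_{j_0}},\ldots,m_s)$. By the inductive hypothesis the Zariski open subset
\[
U=\{F'\in\LL':\ord_{L_{I(r)}}(F')=K_{I(r)}\}\subset\LL'
\]
is non-empty. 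The claim amounts to $\LL\cap U\neq\emptyset$, where $\LL=\LL'\cap\ker(\mathrm{ev}_{p_{j_0}})$ with $\mathrm{ev}_{p_{j_0}}\colon\LL'\to\mathbb{C}^{N_{j_0}}$, $N_{j_0}=\binom{n+m_{j_0}-1}{n}$, the map recording the $m_{j_0}$-jet at $p_{j_0}$. Since $p_{j_0}$ is a general point lying off $L_{I(r)}$, the additional jet constraint at $p_{j_0}$ is transverse to the conditions defining $U$: it cuts $\LL'$ and $\LL'_{\ge K_{I(r)}+1}$ by the same codimension, so $\LL$ cannot be entirely contained in $\LL'_{\ge K_{I(r)}+1}$, which gives the desired element in $U\cap\LL$.

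\textbf{Main obstacle.} The crux is the transversality step in the upper bound: one must show that imposing vanishing to order $m_{j_0}$ at a general $p_{j_0}\notin L_{I(r)}$ imposes the maximal possible number of linear conditions on the subsystem $U$ of divisors with mulitplicity exactly $K_{I(r)}$ along $L_{I(r)}$. Equivalently, the locus of points at which some $F'\in U$ has multiplicity $\ge m_{j_0}$ should be a proper subvariety of $\PP^n$, avoided by the general $p_{j_0}$. This is the only new input needed beyond the base case of \cite[Proposition 2.5]{BraDumPos}, and once it is secured, the induction on $s$ concludes the proof.
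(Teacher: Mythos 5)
Your lower bound is correct and is essentially the content of \cite[Lemma 2.1]{BraDumPos}, which the paper simply quotes; the monomial computation you give is a valid direct proof of it. The problem is the upper bound. Your induction on $s$ hinges entirely on the claim that the jet conditions at a general point $p_{j_0}\notin L_{I(r)}$ cut $\LL'$ and the sublocus $\LL'_{\ge K_{I(r)}+1}$ by the same codimension, and you offer no argument for this beyond ``transversality'' and the generality of $p_{j_0}$. This is not a technical detail to be secured later: the statement that a general fat point imposes on a linear system the same number of conditions relative to a distinguished linear sublocus is precisely the kind of statement whose failure is the subject of the whole paper (speciality), and the bad case $\LL\subseteq\LL'_{\ge K_{I(r)}+1}$ --- i.e.\ $c_1-c_2=\dim\LL'-\dim\LL'_{\ge K_{I(r)}+1}$ in your notation --- is exactly the negation of the proposition. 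So as written the inductive step assumes a statement of essentially the same depth as the one being proved. Note also that your ``equivalent'' reformulation is stated backwards: if the locus of points where some $F'\in U$ has multiplicity $\ge m_{j_0}$ were a proper subvariety avoided by the general $p_{j_0}$, then $U\cap\LL$ would be \emph{empty}; what you need is that this locus is dense (indeed all of $\PP^n$).

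For comparison, the paper closes exactly this gap by a different mechanism that never invokes independence of conditions. Assuming for contradiction that $\tilde k_{I(R)}\ge K_{I(R)}+1$, it first normalizes by adding a suitable multiple of a fixed hyperplane $H_{\bar J}$ so that the next-larger cycle $L_J=L_{I(R)\cup\{i_{R+2}\}}$ has $K'_J=0$, and then sweeps lines from a variable point $p\in L_{I(R)}$ to the auxiliary base point $p_{i_{R+2}}$: B\'ezout forces each such line, and hence all of $L_J$, into the base locus, contradicting the (backward) inductive knowledge of the multiplicity along $L_J$. The induction there runs on the dimension of the cycle (cases $r=R=\bar r$, $r=R<\bar r$, $r<R$), not on the number of points, and the only inputs are B\'ezout and the case $r=n-1$. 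If you want to keep your induction on $s$, you would need to supply an actual proof of the transversality claim; the paper's line-sweeping argument is the natural substitute.
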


We can rephrase Proposition \ref{sharpBLL} in the setting of the blow-up $X^n_{(0)}$ by 
saying that the strict transform of  $L_{I(r)}$  is contained with multiplicity $k_{I(r)}$
in the base locus of $D$.

\begin{proof}
Let $1\le r \le n-1$ and let $I(r)$ be  a multi-index with  $k_{I(r)}=K_{I(r)}\ge 0$
and let $\tilde{k}_{I(r)}\ge0$ be the multiplicity with which $L_{I(r)}$ is contained in the base locus
of $\LL$.
By B\'ezout's theorem one has $\tilde{k}_{I(r)}\ge k_{I(r)}$, 
see \cite[Lemma 2.1]{BraDumPos} for details.
We introduce the following notation:
$$R=R(\LL,I(r)):=\max(\rho\leq n-1|K_{I(\rho)}\geq 0, I(r)\subseteq I(\rho)).$$

If $r=n-1(=R)$ then the claim is true by {\cite[Lemma 4.4]{CDD}}.
For $r\le n-2$ we consider separately the following cases:
\begin{itemize}
\item[(i)] $r= R=\bar{r}$,
\item[(ii)] $r= R<\bar{r}$,
\item[(iii)] $r<R$.
\end{itemize}

\medskip

Case (i).
We prove the statement 
by  backward induction on $R$.
Precisely, given $R\le n-2$, we assume that for every non-empty 
linear system $\mathcal{M}$ in $\PP^n$ such that
$R(\mathcal{M},I(r))=R+1$ and for
every multi-index $I(R+1)$ with $K_{I(R+1)}\ge0$,
 the cycle $L_{I(R+1)}$ is contained in the base locus of $\mathcal{M}$
 with multiplicity $K_{I(R+1)}$, and we
prove the statement for $\LL$ and $I(r)$ with $R(\LL,I(r))=R$.
Let $I(R)=\{i_1,\ldots, i_{R+1}\}$ be a multi-index with
 $K_{I(R)}\ge 0$, and consider the inclusions
 $I(R)\subset J=\{i_1,\dots, i_{R+1},i_{R+2}\}$ and 
$J\subset \bar{J}=\{i_1,\dots, i_{R+1},i_{R+2}, \dots, i_{\min(s,n)}\}$.  
Because $K_{J}\leq 0$, then by induction the cycle $L_{J}$ is not  in the base locus of $\mathcal{L}$.
Let $H_{\bar{J}}:=H-\sum_{i\in \bar{J}}E_i$ denote the strict transform 
on $X^n_{(0)}$ of the hyperplane of $\PP^n$
spanned by the points indexed by elements of $\bar{J}$.
We introduce the following divisor:
$$D':=D+(-K_{J})H_{\bar{J}}.$$
Notice that 
$D'$ is an effective divisor on $X^n_{(0)}$, of the form \eqref{general divisor}. We will denote by $m'_i$ 
and $d'$ its coefficients: $d':=d-K_{J}$, $m'_{i}:=m_{i}-K_{J}$ 
for $i\in \bar{J}$ and $m'_{i}:=m_{i}$ for $i\notin \bar{J}$. By construction,
$$K'_{J}:=\sum_{i\in J} m'_{i}-(R+1)d'=0.$$
Moreover, $K'_{I(R)}=K_{I(R)}+(-K_J)\ge K_{I(R)} \ge0$, so that 
 $L_{I(R)}$ is contained in the base locus of $D'$ with multiplicity at least $K'_{I(R)}$.

Assume that $\tilde{k}_{I(R)} \geq K_{I(R)}+1$. The multiplicity of containment of the linear cycle $L_{I(R)}$ 
in the base locus of $D'$ is $$\tilde{k}'_{I(R)}\geq \tilde {k}_{I(R)}+(-K_{J})\geq K_{I(R)}+1+(-K_J)= K'_{I(R)}+1.$$
For any point $p\in L_{I(R)}$, we compute the multiplicity $\tilde{k}'_{p, p_{R+2}}$ with which the
 line spanned by the points $p$ and $p_{i_{R+2}}$ is contained in the base locus of $D'$:
$$
\tilde{k}'_{p, p_{R+2}}\geq \tilde{k}'_{I(R)}+m'_{R+2}-d'\ge K'_{I(R)}+1+m'_{R+2}-d'=K'_{I(R+1)}+1=1.
$$
This shows that such a  line  is in the base locus of $D'$ for any $p\in L_{I(R)}$. Letting $p$ vary in $L_{I(R)}$, we obtain that $L_J$ is in the base locus of $D'$. This gives a contradiction.

\medskip

Case (ii).
We know by the previous case that for all cycles $L_{I(\bar{r})}$, the multiplicity of containment is given by $k_{I(\bar{r})}.$ For smaller cycles, $I(R)$, with
$1\leq r=R<\bar{r}$ and $K_{I(R)}\geq 0$ we run induction on $\bar{r}- R$ and the same argument used for Case (ii) applies.
 
\medskip

Case (iii). We assume that the statement holds for $I(R)$, with $I(r)\subseteq I(R)$ and  such that $K_{I(R)}\ge0$ and we show that it holds for
$I(r)$. Namely, assuming 
$\tilde{k}_{I(R)}=k_{I(R)}$, for all $I(R)$, we prove that $\tilde{k}_{I(r)}=k_{I(r)}$.
Notice that $0\le K_{I(R)}\leq K_{I(r)}$, since $m_i\le d$.
Therefore, all linear subspaces $L_{I(\rho)}$ of $L_{I(R)}$
are contained in $\Bs(\LL)$ with multiplicity at least 
$K_{I(\rho)}\geq 0$,  by {\cite[Lemma 2.1]{BraDumPos}}. In particular,
$L_{I(r)}$ is contained at least $K_{I(r)}$ times.

Assume now by contradiction that $L_{I(r)}$ is contained in $\Bs(\LL)$ with 
multiplicity at least $1+K_{I(r)}$. 
We know that the linear cycle $L_{I(R)\setminus I(r)}$ is contained in the base locus 
with multiplicity at least 
$K_{I(R)\setminus I(r)}\ge 0$. 
For any point $p$ in the cycle $L_{I(r)}$ and $p'$ in $L_{I(R)\setminus I(r)}$ 
we obtain that the line spanned by $p$ and $p'$ is contained in the base locus 
with multiplicity at least $1+K_{I(r)}+K_{I(R)\setminus I(r)}-d=1+K_{I(R)}$. 
This gives a contradiction.
\end{proof}

\begin{remark}
We must mention that the first part
of the proof of Proposition \ref{sharpBLL}, Case (i), 
was established in {\cite[Proposition 2.5]{BraDumPos}}, but we include it here also for 
the sake of completeness.
\end{remark}

\subsection{Vanishing theorems}\label{Vanishing theorems section}
In this section we prove Theorem \ref{vanishing for >=n+3}.
The proof will be based on induction on $n\ge1$ and 
on the multiplicities. The case $n=1$ is trivial. The case $s\le n+2$, namely 
$m_i=0$, for $i\ge n+3$,
is solved in \cite[Ch. 4]{BraDumPos}.

As in Section \ref{linear base locus lemma}, let $\II$ be the set of all subsets of $\{1,\dots,s\}$ and  let $\II^>$ be the subset of $\II$ introduced in \eqref{II}.
For all $1\le r\le n-1$, let $\II(r)\subset\II$ be the set of multi-indices of $\II$ of length at most $r+1$, as in \eqref{IIj}.

 \begin{lemma}\label{I II III}
 In the above notation, if $D$ satisfies \eqref{EffectivityCondition}, then the set $\II^>(D)$ satisfies conditions \emph{(I), (II)} and \emph{(III)} of Section \ref{blow-up construction}.
 \end{lemma}
\begin{proof}
Condition (I) follows by the definition. 

Since $m_i\le d$, for all $i$, then $K_I\ge K_J$ for $I\subset J$. Hence (II) is satisfied. 

If $|I\cup J|\le n+1$ then (III) follows easily, because one may assume that $L_I$ and $L_J$ are coordinate subspaces. 
Assume that $I\cap J=\emptyset$ and $|I\cup J|= n+2$. 
The inequalities \eqref{EffectivityCondition} and $m_i\ge1$ for all $i$, 
imply that  $K_I+K_J=\sum_{i\in I\cup J}m_i-nd\le0$, hence at most one among $I$ and $J$ is in $\II^>$. 
This proves (III) in this case. If $I\cap J\neq\emptyset$ and $|I\cup J|= n+2$, we conclude by just noticing that $K_I+K_J\le K_I+K_{J\setminus I}\le0$. We leave it to the reader to verify that condition (III) holds also for $|I\cup J|\ge n+3$.
\end{proof}

In the notation of Section \ref{blow-up construction}, for every $1\le r\le n-1$, 
let $X^n_{(r)}$ be the blow-up of $X^n_{(r-1)}$  along 
the union of the strict transforms of the $r$-cycles $L_{I(r)}$, $I(r)\in\mathcal{I}^>$. The total transform of $D_{(r-1)}\subset X^n_{(r-1)}$ is 
\begin{equation}\label{complete transform}
(\pi_{(r)})^\ast D_{(r-1)}=dH-\sum_{I(\rho)\in\II(r-1)} k_{I(\rho)} E_{I(\rho)},
\end{equation}
while the strict transform of $D_{(r-1)}$ (cfr. \eqref{proper transform 1}) is
\begin{equation}\label{strict transform}
\begin{split}
D_{(r)}&= dH-\sum_{I(\rho)\in\II(r-1)} k_{I(\rho)} E_{I(\rho)}-\sum_{I(r)\in\II} k_{I(r)} E_{I(r)}\\
& =
dH-  \sum_{I(\rho)\in\II(r)} k_{I(\rho)} E_{I(\rho)}.
\end{split}
\end{equation}

\subsubsection{Induction on the sum of the multiplicities}\label{Induction on b}

Let $D$ be as in \eqref{general divisor}. Modulo reordering the indices $\{1,\dots,s\}$ if necessary, we may assume that $m_1\ge1$.
We introduce the following divisors on $X^n_{(0)}$:
$$
D':=D+E_1=dH-(m_1-1)E_1-\sum_{i=2}^sm_i E_i.
$$
It corresponds to the linear system of hypersurfaces of $\PP^n$ denoted by $\ls':=\ls_{n,d}(m_1-1,m_2,\dots,m_s)$.

\begin{remark}\label{D' has smaller base locus}
Notice that $\II^>(D')\subset\II^>(D)$. In particular the linear base locus of $D'$ is contained in that of $D$. Precisely,
the cycles $L_I$ with $I\notin \mathcal{I}(n-1)_1$ have the same 
multiplicity of containment in both base loci, 
while the cycles $L_I$ with $I\in\mathcal{I}(n-1)_1$ are contained with multiplicity one more in $D$, 
see Proposition \ref{sharpBLL}. 
\end{remark}

The following provides the induction step on the integer $b=\sum_{i=1}^sm_i-nd$, that was defined in \eqref{b}.

\begin{lemma}\label{condition b kernel}
If $D$ satisfies \eqref{EffectivityCondition}, then so does $D'$. In particular the set $\II^>(D')$ satisfies condition  \emph{(I), (II)} and \emph{(III)} of Section \ref{blow-up construction}.

\end{lemma}
\begin{proof}
It is a trivial computation that $b(D')=b(D)-1$. The second statement follows from Lemma \ref{I II III}.
\end{proof}

\medskip

For all $I\in \mathcal{I}(r)_1$ (cfr. definition \eqref{IIj}), set $k'_I:= k_I(D')=\max(K_I-1,0)$. 
We have
\begin{equation}\label{D'r}
D'_{(r)}=dH-(m_1-1)E_1-\sum_{i=2}^sm_i E_i-
\sum_{I\in\mathcal{I}(r)_1}k'_I E_I- 
\sum_{I\in\mathcal{I}(r)\setminus\mathcal{I}(r)_1}k_IE_I.
\end{equation}
Using the notation 
$$\mathcal{I}(r)_1^>:=\II^>\cap\II(r)_1=\{I(\rho) : 1\le \rho \le r, 1\in I(\rho), K_{I(\rho)}>0\},$$
 we can write
$$
D'_{(r)}= D_{(r)}+\sum_{I\in \mathcal{I}(r)_1^>}E_I.
$$

\begin{remark}\label{D' lives in spaces of D}
For every $1\le r\le n-1$, the natural space where $D'_{(r)}$ lives is the
 subsequent blow-up of $\PP^n$ along the linear cycles parametrised by $\II^>(D')$, ordered in increasing dimension. We can consider the total transform of $D'_{(r)}$ in the subsequent blow-up of the latter space along the linear cycles parametrised by $\II^>(D)\setminus \II^>(D')$, in increasing dimension: this space is denoted by $X^n_{(r)}$.
The dimensions of the cohomology groups of $D'_{(r)}$ and of its total  transform are equal. Therefore, abusing notation, in this section and throughout this paper, we will use the same symbol for $D'_{(r)}$ and for its total transform in $X^n_{(r)}$ and we will consider the cohomology groups
$H^i(X^n_{(r)},\mathcal{O}_{X^n_{(r)}}(D'_{(r)}))$.
\end{remark}

\subsubsection{Induction on $n$}\label{Induction on n}
Let us assume, without loss of generality, that  $d\ge m_1\ge m_2\ge\cdots\ge m_s\ge1$.
In order to employ induction on $n$ we want to restrict the divisor $D'_{(r)}$ to
the strict transform of the exceptional divisor $E_1$, 
that is isomorphic to the blown-up space $X^{n-1}_{(r-1)}$. Such a restricted divisor will  also satisfy  condition \eqref{EffectivityCondition}.

Precisely, for every $1\le r\le n-1$, in the space $X^n_{(r)}$  we will use the following restriction sequence of type  \eqref{seq A}:
\begin{equation}\label{seq Ar}
 0\longrightarrow D'_{(r)}-E_1\longrightarrow  D'_{(r)}\longrightarrow
 {D'_{(r)}}{|_{E_1}}\longrightarrow 0.
\end{equation}
of which now we give a detailed description. 
By abuse of notation, write $E_1$ for the strict transform $(\pi_{(r,0)})^{-1}_\ast E_1$ 
of the exceptional divisor in $X^n_{(0)}$ of the point $p_1$. Recall from Section 
\ref{exc point} that 
$E_1\cong X^{n-1}_{(r-1)}$ has Picard group generated by the hyperplane class $h$ and by the exceptional classes
$e_{I(\rho)|1}={E_{I(\rho)}}{|_{E_1}}$, for $I(\rho)\in\mathcal{I}(r)_1$. 
The divisor $D'_{(r)}$ restricts to $E_1$ as
$$
{D'_{(r)}}{|_{E_1}}=(m_1-1)h-
 \sum_{I\in \mathcal{I}(r)_1}k'_{I}e_{{I|1}},
$$
where $k'_{I}:=\max(K_I-1,0)$, as in Section \ref{Induction on b}.

\begin{remark}
Notice that for $r=1$, $E_1\cong X^{n-1}_{(0)}$ is the space $\PP^{n-1}$ 
blown-up at a collection of points in general position.
Therefore $G:={D'_{(r)}}{|_{E_1}}$ can be seen as a divisor of the form \eqref{general divisor} 
given by the integers $(m_1-1,k'_{12},\dots,k'_{1s})$. Moreover one can check that
$G_{(r-1)}=D'_{(r)}{|_{E_1}}$, for every $1\le r\le n-1$.
\end{remark}

The following provides the induction step on $n$; in fact ${D'_{(1)}}{|_{E_1}}$ is a divisor 
of the form \eqref{general divisor}  in a blown-up $\PP^{n-1}$ in points in general position.

\begin{lemma}\label{cond b n+3}
If $D$ satisfies \eqref{EffectivityCondition}, then so does ${D'_{(1)}}{|_{E_1}}$.
\end{lemma}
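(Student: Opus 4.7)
The plan is to verify \eqref{EffectivityCondition} for each of the two divisors $D'$ and ${D'_{(1)}}|_{E_1}$ by explicit arithmetic, handling them separately.

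First I would handle $D'$. Its parameters are $(d;\, m_1 - 1, m_2, \ldots, m_s)$, so $b' := \sum_i m_i' - nd = b - 1$, and the count $s'(d)$ of points of multiplicity $d$ is at most $s(d)$ (it drops by one when $m_1 = d$ and is unchanged otherwise). Consequently $n - s'(d) \ge n - s(d)$, so the hypothesis $b \le \min(n - s(d), s - n - 2)$ gives
\[
b' = b - 1 \le \min(n - s(d),\, s - n - 2) \le \min(n - s'(d),\, s - n - 2),
\]
and the multiplicity bound $1 \le m_i' \le d$ is clear (the edge case $m_1 = 1$ is reinterpreted as a system on $s - 1$ effective points, for which the same inequalities apply).

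Next I would turn to ${D'_{(1)}}|_{E_1}$ via the identification $E_1 \cong X^{n-1}_{(0)}$ recalled in Section \ref{exc point}: this is $\PP^{n-1}$ blown up at the $s - 1$ general points corresponding to the tangent directions from $p_1$. The restriction formula displayed just above the lemma yields the new data $d'' = m_1 - 1$ and $m''_j = (m_1 + m_j - d - 1)_{+}$, with effective point set $S := \{j \ne 1 : m_1 + m_j \ge d + 2\}$ of cardinality $\bar s$. A direct computation rearranges
\[
b'' := \sum_{j \in S} m''_j - (n-1)\,d'' = (\bar s - n + 1)(m_1 - 1) + \sum_{j \in S}(m_j - d),
\]
in which the second summand is non-positive since $m_j \le d$. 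I would then bound $b''$ against $\min\bigl((n-1) - s''(d''),\, \bar s - n - 1\bigr)$, using the identity $m''_j = d''$ iff $m_j = d$ (so $s''(d'') = |\{j \in S : m_j = d\}| \le s(d)$) and the inequality $\sum_{j \notin S,\, j \ne 1} m_j \le (s - 1 - \bar s)(d + 1 - m_1)$ coming from the very definition of $S$, which relates $\sum_{j \in S} m_j$ back to $b = \sum m_i - nd$.

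The hardest part will be the case analysis for this second claim, comparing the two components of the original minimum $\min(n - s(d),\, s - n - 2)$ with those of the new one. When $\bar s \le n + 1$, the restricted system already falls under the first clause of Theorem \ref{theorem a n+3} and no further verification is needed; only in the regime $\bar s \ge n + 2$ does the refined bound come into play, and there the rewriting above makes the inequality transparent after splitting on whether $m_1 = d$ or $m_1 < d$ and on which term of each minimum is active.
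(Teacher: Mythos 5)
Your treatment of $D'$ and your setup for ${D'_{(1)}}{|_{E_1}}$ agree with the paper's: the restricted divisor has degree $m_1-1$ and multiplicities $k'_{1j}=\max(m_1+m_j-d-1,0)\le m_1-1$, and your rearrangement $b''=(\bar s-n+1)(m_1-1)+\sum_{j\in S}(m_j-d)$ is exactly the quantity the paper estimates. The gap is that the one step which actually constitutes the proof --- verifying $b''\le\min\bigl(n-1-s''(d''),\,\bar s-n-1\bigr)$ --- is never carried out: you defer it to a case analysis you assert is ``transparent'', and the single auxiliary inequality you offer for it, $\sum_{j\notin S,\,j\ne1}m_j\le(s-1-\bar s)(d+1-m_1)$, points in the wrong direction. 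To bound $b''$ from above you need an \emph{upper} bound on $\sum_{j\in S}m_j$, hence a \emph{lower} bound on the multiplicities of the discarded points. The relevant input is $m_j\ge1$ for $j\notin S$, which gives $\sum_{j\in S}m_j\le\sum_i m_i-m_1-(s-1-\bar s)$; combined with $\sum_i m_i\le nd+s-n-2$ this is precisely the paper's displayed chain of inequalities yielding $\sum_j k'_{1j}\le(n-1)(m_1-1)$ once $\bar s\ge n$. Your inequality instead yields only a lower bound on $\sum_{j\in S}m_j$, hence a lower bound on $b''$, which is useless here.

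The remaining ingredients you list ($\sum_{j\in S}(m_j-d)\le0$ and $s''(d'')\le s(d)$) cannot control the term $(\bar s-n+1)(m_1-1)$, which is nonnegative and can be of order $(\bar s-n+1)(d-1)$ when $\bar s$ is large, whereas the target $\bar s-n-1$ grows only linearly in $\bar s$ with no factor of $m_1$. So the heart of the argument is missing, and the tool proposed in its place would fail; you need to reinstate the hypothesis $m_i\ge1$ on the points dropped from $S$ and feed the global bound $\sum_i m_i-nd\le s-n-2$ into the estimate, as the paper does. (Your remark that $\bar s\le n+1$ reduces to the first clause of Theorem \ref{theorem a n+3} is harmless and consistent with the paper's own dismissal of the small-$\bar s$ case.)
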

Even though the same argument appeared in the proof of \cite[Lemma 5.7]{BraDumPos},
we include it here for the sake of completeness.
\begin{proof}

Set $\bar{s}:=\#\II(1)_1^>$ be the number of lines contained in the base locus of $D$ passing through the first point $p_1$.
Consider the divisor  $${D'_{(1)}}{|_{E_1}}=(m_1-1)h-
 \sum_{i=2}^{\bar{s}}k'_{1i}e_{{1i|1}}$$ in $E_1\cong X^{n-1}_{(0)}$.
Notice that $k'_{1i}=m_1+m_i-d-1\le m_1-1$, 
as $m_i\le d$, for all $i=2,\dots,\bar{s}$.
If $\bar{s}\le n-1$, the first inequality of \eqref{EffectivityCondition} is trivially satisfied by ${D'_{(1)}}{|_{E_1}}$.
When  $\bar{s}\ge n$, we conclude by computing
\begin{align*}
\sum_{i=2}^{\bar{s}}k'_{1i}&
=\sum_{i=2}^{\bar{s}} m_i+\bar{s}(m_1-d-1)\\
&\le\sum_{i=1}^sm_i-m_1-(s-\bar{s}-1)+\bar{s}(m_1-d-1)\\
&\le nd+s-n-m_1-(s-1)+\bar{s}(m_1-d)\\
&\le(n-1)(m_1-1).
\end{align*}
\end{proof}

\subsubsection{Global sections in the exact sequences  \eqref{seq Ar}}

Let us denote by
\begin{equation}\label{linear obstruction}
l(D,r):=\sum_{I(\rho)\in\II\setminus \II(r-1)}
(-1)^{\rho-r-1}{{n+k_{I(\rho)}-\rho-1}\choose n},
\end{equation}
 the integer that appears in Theorem \ref{monster for effective} $(b)$.
It is the alternating sum of
the contributions to the formula for the linear virtual dimension of $\ls=\ls_{n,d}(m_1,\dots,m_s)$, given by the multiple base cycles of dimension at least $r+1$.
Notice that $l(D,0)=\ldim(\ls)$, see Definition \ref{new-definition}.

\begin{lemma}\label{sum of obstructions}
In the above notation, the following equality holds: 
$$l(D,r)=l(D',r)-l({D'_{(1)}}{|_{E_1}},r-1).$$
\end{lemma}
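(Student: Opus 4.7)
The plan is to prove the identity term-by-term by splitting the sum defining $l(D,r)$ according to whether the multi-index $I(\rho)$ contains the index $1$. Write $\bar{D} := {D'_{(1)}}|_{E_1}$ for the restriction, which on $E_1 \cong X^{n-1}_{(0)}$ corresponds to the linear system of degree $m_1 - 1$ with multiplicities $k'_{1j} = \max(m_1 + m_j - d - 1, 0)$ at the points corresponding to the lines $L_{1j}$, $j = 2,\dots,s$. For multi-indices $I(\rho) \not\ni 1$ the multiplicity is unchanged, $k_{I(\rho)}(D) = k_{I(\rho)}(D')$, and such multi-indices do not appear in $\bar{D}$; they contribute identically to $l(D,r)$ and $l(D',r)$, and nothing to $l(\bar{D}, r-1)$.

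For $I(\rho) \ni 1$, I would pair $I$ with $J := I \setminus \{1\}$, a multi-index of length $\rho$ in $\{2,\dots,s\}$. Under this pairing, the signs $(-1)^{\rho-r-1}$ in $l(D,r)$ and $l(D',r)$ agree with $(-1)^{(\rho-1)-(r-1)-1}$ in $l(\bar{D},r-1)$, so the claim reduces to the term-wise identity
\begin{equation*}
\binom{n + k_I(D) - \rho - 1}{n} = \binom{n + k_I(D') - \rho - 1}{n} + \binom{(n-1) + k_J(\bar{D}) - (\rho-1) - 1}{n-1}.
\end{equation*}
A direct expansion of $K_J(\bar{D}) = \sum_{j \in J} k'_{1j} - (\rho-1)(m_1 - 1)$ yields
\begin{equation*}
K_J(\bar{D}) - \bigl(K_I(D) - 1\bigr) = \sum_{j \in B}\bigl(d + 1 - m_1 - m_j\bigr), \qquad B := \{j \in J : k'_{1j} = 0\},
\end{equation*}
with non-negative summands. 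The key step is to show $k_J(\bar{D}) = \max(k_I(D) - 1, 0) = k_I(D')$.

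When $k_I(D) \geq 1$, any $j \in B$ with $m_1 + m_j \leq d$ would force $K_I(D) \leq d + (\rho-1)d - \rho d = 0$, contradicting the hypothesis; hence $m_1 + m_j = d + 1$ for every $j \in B$, the defect vanishes, and $k_J(\bar{D}) = K_I(D) - 1 = k_I(D) - 1$. The term-wise identity then reduces to Pascal's rule $\binom{a}{n} = \binom{a-1}{n} + \binom{a-1}{n-1}$ at $a = n + k_I(D) - \rho - 1$. When $k_I(D) = 0$, the bound $K_J(\bar{D}) \leq (1 - |B|)(m_1 - 1)$ (obtained from $\sum_{j \in A} m_j \leq |A| d$ with $A := J \setminus B$), together with the $B = \emptyset$ case $K_J(\bar{D}) = K_I(D) - 1 \leq -1$, gives $k_J(\bar{D}) = 0$ in every sub-case, so all three binomials vanish since $\rho \geq 1$. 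Summing the matched identities over all pairs yields the claimed equality.

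I expect the main obstacle to be the case analysis in the regime $B \neq \emptyset$: a priori the defect $\sum_{j \in B}(d + 1 - m_1 - m_j)$ could shift $K_J(\bar{D})$ strictly above $K_I(D) - 1$ and break the term-wise Pascal identity. Pinning this defect down to zero precisely in the range $k_I(D) \geq 1$ where non-zero contributions occur is the technical heart of the argument; the remaining edge cases (including $\rho = 0$, $I = \{1\}$, $J = \emptyset$) are handled by the same Pascal identity applied to the leading binomial.
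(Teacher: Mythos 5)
Your proof is correct and follows essentially the same route as the paper, whose entire argument is the one-line invocation of Pascal's rule $\binom{a}{b}=\binom{a-1}{b}+\binom{a-1}{b-1}$; you have simply made explicit the bijection $I\leftrightarrow I\setminus\{1\}$ and the verification that $k_{I\setminus\{1\}}({D'_{(1)}}|_{E_1})=\max(k_I(D)-1,0)$ under the standing bound $m_i\le d$, which the paper leaves implicit.
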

\begin{proof}
It follows from the equality of Newton binomials ${\alpha\choose\beta}={{\alpha-1}\choose \beta}+{{\alpha-1}\choose{\beta-1}}$, for $\alpha,\beta\ge1$.
\end{proof}

\begin{proposition}\label{h^0 exact}
Assume that a divisor $D$ of the form \eqref{general divisor} satisfies condition \eqref{EffectivityCondition} and fix  $1\le r\le n-1$. 
The following sequence of global sections is exact:
$$
0\longrightarrow H^0(D'_{(r)}-E_1)\longrightarrow  H^0(D'_{(r)})\longrightarrow
 H^0({D'_{(r)}}{|_{E_1}})\longrightarrow 0.
$$

\end{proposition}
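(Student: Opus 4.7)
The plan is to extract the conclusion from the long exact sequence in cohomology associated with the short exact sequence $(A_{(r)})$. That sequence is
\[
0\to H^0(D'_{(r)} - E_1)\to H^0(D'_{(r)})\to H^0(D'_{(r)}|_{E_1})\stackrel{\delta}{\to} H^1(D'_{(r)} - E_1)\to H^1(D'_{(r)})\to\cdots
\]
Under the hypothesis $h^1(D'_{(r)})=0$ the last arrow above vanishes, so the exactness claimed in the proposition (i.e.~surjectivity of the restriction $H^0(D'_{(r)})\to H^0(D'_{(r)}|_{E_1})$) is equivalent to $\delta=0$, which in turn is equivalent to the auxiliary vanishing
\[
h^1(D'_{(r)} - E_1)=0.
\]
The entire content of the proposition is therefore this extra vanishing.

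To obtain it, I would start from the identification
\[
D'_{(r)}-E_1 = D_{(r)}+\sum_{I\in \mathcal{I}(r)_1^{>},\,I\ne\{1\}} E_I
\]
recorded in Section \ref{Induction on b}, and reintroduce each exceptional divisor $E_I$ into $D_{(r)}$ one at a time via the restriction sequences
\[
0\to D_{(r)}+F'\to D_{(r)}+F'+E_I\to (D_{(r)}+F'+E_I)|_{E_I}\to 0,
\]
with $F'$ ranging over the partial sums ordered by increasing $|I|$. At each step, Lemmas \ref{Picard of factors} and \ref{normal bundle} provide the splitting $E_I \cong X^{|I|-1}_{(|I|-3)}\times X^{n-|I|}_{(r-|I|)}$ and identify the normal contribution with $\Cr_{|I|-1}(h_b)\boxtimes h_f$. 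This forces the restricted class to decompose via K\"unneth as a tensor whose first factor is a multiple of the Cremona class, and Theorem \ref{vanishing cremona} then shows that such multiples have the cohomology of the corresponding multiples of the hyperplane class on $\PP^{|I|-1}$, so in particular their $H^1$ vanishes. A diagram chase through the resulting chain of long exact sequences propagates $h^1=0$ from $D_{(r)}$ up to $D'_{(r)}-E_1$.

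The base-step input $h^1(D_{(r)})=0$ is available either from Theorem \ref{monster for effective} in the regime $s\le n+2$, or from the inductive hypotheses built into the proof of Theorem \ref{vanishing for >=n+3} (simultaneously on $n$ and $b$, as outlined in Sections \ref{Induction on b} and \ref{Induction on n}); Lemma \ref{cond b n+3} is the crucial ingredient ensuring that the bound \eqref{EffectivityCondition} is preserved under the passage from $D$ to $D'$ and to the restriction $D'_{(r)}|_{E_1}$, so that these inductive hypotheses can be legitimately invoked. The principal obstacle is the combinatorial verification that at every intermediate step of the chain of restrictions the resulting divisor still satisfies the numerical conditions required to apply Theorem \ref{vanishing cremona} and the inductive hypotheses; without the uniform control provided by Lemma \ref{cond b n+3} and the Cremona-type vanishing of Section \ref{Cremona of hyperplane class}, the chain would not close.
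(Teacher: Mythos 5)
Your reduction of the statement to the auxiliary vanishing $h^1(D'_{(r)}-E_1)=0$ is correct, and your mechanism for transporting cohomology between $D'_{(r)}-E_1$ and $D_{(r)}$ (reintroducing the exceptional divisors $E_I$ one at a time and killing the restricted pieces with Theorem \ref{vanishing cremona} and the K\"unneth formula) is exactly the content of Proposition \ref{sequence B}. The gap is in the base-step input $h^1(D_{(r)})=0$. For $s\ge n+3$ --- the case this proposition exists to handle --- that vanishing is part of the \emph{conclusion} of Theorem \ref{vanishing for >=n+3} for the divisor $D$ itself, with parameters $(n,b)$. The induction of Section \ref{vanishing section n+3} runs on $(n,b)$, and the hypotheses available while treating $D$ concern only $(n-1,\cdot)$ and $(n,b-1)$, i.e.\ the divisors $D'$ and ${D'_{(1)}}{|_{E_1}}$, not $D$. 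In the paper the vanishing $h^1(D'_{(r)}-E_1)=0$ is \emph{deduced from} Proposition \ref{h^0 exact} via the sequence \eqref{seq h0} in Proposition \ref{simple obstructions}, and by Proposition \ref{sequence B} it is equivalent to $h^1(D_{(r)})=0$; so your argument assumes what is to be proved. Note also that the long exact sequence together with the inductive hypotheses only bounds $h^1(D'_{(r)}-E_1)$ by $h^0({D'_{(r)}}{|_{E_1}})$, which is not zero, so no diagram chase can close this on its own.

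The ingredient you are missing is the dimension count. The paper first reduces to $r=1$ (the relevant $h^0$'s are unchanged under taking further strict transforms), then uses the inclusion $|D_{(1)}|\subseteq|D'_{(1)}-E_1|$ to compare $h^0(D_{(1)})$ with $h^0(D'_{(1)}-E_1)$, and finally invokes Theorem \ref{theorem a n+3} --- an independently established result of \cite{BraDumPos} computing $h^0=\ldim$ for all three systems $D$, $D'$ and ${D'_{(1)}}{|_{E_1}}$, which satisfy \eqref{EffectivityCondition} by Lemma \ref{cond b n+3} --- together with the combinatorial identity relating their linear virtual dimensions. The resulting relation $h^0(D)=h^0(D')-h^0({D'_{(1)}}{|_{E_1}})$ forces the restriction map on global sections to be surjective. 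In other words, the surjectivity is obtained numerically from known $h^0$'s, not by propagating an $h^1$-vanishing; Lemma \ref{cond b n+3} matters here because it licenses applying Theorem \ref{theorem a n+3} to $D'$ and ${D'_{(1)}}{|_{E_1}}$, not as bookkeeping for a chain of Cremona restrictions.
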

\begin{proof}
It is enough to show the statement for $r=1$, because the number of global sections of $D_{(1)}$ and $D'_{(1)}$
(and of ${D_{(1)}}{|_{E_1}}$) are preserved when
taking strict transforms in the spaces $X^n_{(r)}$ ($E_1\cong X^{n-1}_{(r-1)}$ respectively). 

Notice that  $h^0(D'_{(1)}-E_1)=h^0(D'-E_1)$. Since $D'-E_1=D$ by definition, we conclude that $h^0(D'_{(1)}-E_1)=h^0(D)$.

Moreover, notice that ${D_{(1)}}{|_{E_1}}$ is a divisor on $E_1\cong X^{n-1}_{(0)}$, the $(n-1)$-dimensional projective space blown-up at points.

One can verify that  $\ldim(D)=\ldim(D')-\ldim({D'_{(1)}}{|_{E_1}})$, using Lemma \ref{sum of obstructions}.
Furthermore, by Theorem \ref{theorem a n+3}  all three divisors are effective 
and only linearly obstructed, as they satisfy 
\eqref{EffectivityCondition} by Lemma \ref{condition b kernel} and Lemma \ref{cond b n+3}. 
Hence $h^0(D)=h^0(D')-h^0({D'_{(1)}}{|_{E_1}})$. 
\end{proof}

\subsubsection{The cohomologies of the kernel of the sequences \eqref{seq Ar}}

We now compute all cohomology groups of the divisors in the sequences
\eqref{seq Ar}, for every $r$, and in particular we obtain that all cohomology groups  but the $(r+1)-$st vanish.

\begin{proposition}\label{simple obstructions}
Assume that $D$ satisfies conditions \eqref{EffectivityCondition}  and fix  $1\le r\le n-1$. 
Assume that statement $(b)$ of  Theorem \ref{monster for effective} holds for 
the pairs $(D',r)$ and $({D'_{(1)}}{|_{E_1}},r-1)$.
Then statement $(b)$ of  Theorem \ref{monster for effective}  holds for  the pair $(D'_{(r)}-E_1,r)$.
\end{proposition}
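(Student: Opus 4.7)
The plan is to extract the cohomology of the kernel $D'_{(r)}-E_1$ from the long exact sequence associated with \eqref{seq Ar}, using the hypotheses on $D'$ and on $D'_{(1)}|_{E_1}$ together with the identification of the restricted divisor $D'_{(r)}|_{E_1}$ as a strict transform in the lower-dimensional ambient space $X^{n-1}_{(r-1)}$.

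First, I would show that the restriction $D'_{(r)}|_{E_1}$ on $E_1 \cong X^{n-1}_{(r-1)}$ (see Section~\ref{exc point}) is exactly the strict transform, at level $r-1$, of the divisor $D'_{(1)}|_{E_1}$ on $X^{n-1}_{(0)}$. The linear cycles of $\PP^{n-1}$ blown up to form $X^{n-1}_{(r-1)}$ correspond to the linear cycles $L_{I(\rho)} \subset \PP^n$ with $1 \in I(\rho)$, $\rho \le r$, and the coefficients $k'_I = \max(K_I - 1, 0)$ of the exceptional divisors $e_{I|1}$ match those dictated by Proposition~\ref{sharpBLL} applied in the lower-dimensional blow-up. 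By Lemma~\ref{cond b n+3}, the hypotheses of Theorem~\ref{monster for effective}(2) apply to $D'_{(1)}|_{E_1}$, so
\[
h^i(D'_{(r)}|_{E_1}) = 0 \text{ for } i \neq r, \qquad h^{r}(D'_{(r)}|_{E_1}) = l(D'_{(1)}|_{E_1}, r-1).
\]
Likewise, applying the hypothesis directly to $D'_{(r)}$ gives $h^i(D'_{(r)}) = 0$ for $i \ne r+1$ and $h^{r+1}(D'_{(r)}) = l(D', r)$.

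The vanishings for $D'_{(r)}-E_1$ then follow from the long exact sequence associated with \eqref{seq Ar}: for every $i \ne r+1$, one has $H^i(D'_{(r)}) = 0$ and, since $i-1 \ne r$, also $H^{i-1}(D'_{(r)}|_{E_1}) = 0$, forcing $H^i(D'_{(r)}-E_1) = 0$. For $i = r+1$, the vanishings $H^r(D'_{(r)}) = H^{r+1}(D'_{(r)}|_{E_1}) = 0$ truncate the long exact sequence to
\[
0 \to H^r(D'_{(r)}|_{E_1}) \to H^{r+1}(D'_{(r)}-E_1) \to H^{r+1}(D'_{(r)}) \to 0,
\]
so that
\[
h^{r+1}(D'_{(r)} - E_1) = l(D'_{(1)}|_{E_1}, r-1) + l(D', r) = l(D, r),
\]
where the last equality is Lemma~\ref{sum of obstructions}. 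This is precisely the value predicted by Theorem~\ref{monster for effective}(2) for the divisor class $D = D' - E_1$.

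The delicate step is the first one: verifying that the combinatorics of the restriction, in particular the coefficients of the pulled-back exceptional divisors $e_{I|1}$ and the underlying blow-up structure on $E_1$, faithfully reproduces the strict-transform construction one level down. This requires systematic use of the intersection table \eqref{intersection table} and Remark~\ref{exc of hyperplanes}, and it is also where condition (III) on $\mathcal{I}$ (guaranteed here by \eqref{EffectivityCondition}) enters, ensuring that the cycles through $p_1$ remain in general position after passing to $E_1$.
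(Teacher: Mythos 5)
Your overall strategy is the paper's own: run the long exact sequence of \eqref{seq Ar}, feed in the hypotheses on $D'_{(r)}$ and on the restriction ${D'_{(r)}}{|_{E_1}}$ (identified with the strict transform of ${D'_{(1)}}{|_{E_1}}$ in $X^{n-1}_{(r-1)}$), and close with Lemma \ref{sum of obstructions}. The computation of $h^{r+1}(D'_{(r)}-E_1)$ via the short exact sequence \eqref{seq hr} is correct.

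However, there is a genuine gap in your treatment of the case $i=1$. You claim that for every $i\ne r+1$ one has $H^{i-1}({D'_{(r)}}{|_{E_1}})=0$ because $i-1\ne r$; but for $i=1$ this would require $H^{0}({D'_{(r)}}{|_{E_1}})=0$, which is false: under \eqref{EffectivityCondition} the divisor ${D'_{(1)}}{|_{E_1}}$ is non-empty (Theorem \ref{theorem a n+3} via Lemma \ref{cond b n+3}), so its strict transform has nonzero global sections. What the long exact sequence actually gives at this spot is
\[
H^0(D'_{(r)})\to H^0({D'_{(r)}}{|_{E_1}})\to H^1(D'_{(r)}-E_1)\to H^1(D'_{(r)})=0,
\]
so $H^1(D'_{(r)}-E_1)$ is the cokernel of the restriction map on global sections, and its vanishing requires proving that this map is surjective. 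This is not formal: the paper devotes Proposition \ref{h^0 exact} to it, deducing surjectivity from the dimension count $h^0(D)=h^0(D')-h^0({D'_{(1)}}{|_{E_1}})$, which in turn rests on the linear non-speciality of all three systems $D$, $D'$ and ${D'_{(1)}}{|_{E_1}}$ (Theorem \ref{theorem a n+3}, applicable by Lemma \ref{cond b n+3}) together with the identity $\ldim(D)=\ldim(D')-\ldim({D'_{(1)}}{|_{E_1}})-1$ and the inclusion $|D_{(1)}|\subseteq|D'_{(1)}-E_1|$. You need to supply this surjectivity argument (or an equivalent one) to obtain $h^1(D'_{(r)}-E_1)=0$; for all $i\ge 2$ with $i\ne r+1$ your vanishing argument is fine.
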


\begin{proof}
For $r$ such that $1\le r\le \min(\bar{r},n-1)$, in the space $X^n_{(r)}$ we consider  sequence 
\eqref{seq Ar}. By assumption we have 
\begin{itemize}
\item $h^i({D'_{(r)}}{|_{E_1}})=0$, for all $i\ne 0,r$ and $h^r({D'_{(r)}}{|_{E_1}})=
l({D'_{(r)}}{|_{E_1}},r-1)$;
\item $h^{i+1}({D'_{(r)}})=0$, for all $i\ne 0,r$ and $h^{r+1}({D'_{(r)}})=l(D',r)$.
\end{itemize}
Therefore, the long exact sequence in cohomology associated with sequence \eqref{seq Ar} splits into
the fundamental sequences 
\begin{equation}\label{seq h0}
0\to H^0(D'_{(r)}-E_1)\to
H^0(D'_{(r)})\to H^0({D'_{(r)}}{|_{E_1}})\to H^1(D'_{(r)}-E_1)\to 0,
\end{equation}
and
\begin{equation}\label{seq hr}
0\to H^r({D'_{(r)}}{|_{E_1}})\to H^{r+1}(D'_{(r)}-E_1)\to
H^{r+1}(D'_{(r)})\to 0.
\end{equation}
Using Proposition \ref{h^0 exact}, we obtain  $h^1(D'_{(r)}-E_1)=0$ from  \eqref{seq h0}. Moreover 
\begin{equation}\label{from tildeD to Dr}
h^{r+1}(D'_{(r)}-E_1)=h^r({D'_{(r)}}{|_{E_1}})+h^{r+1}(D'_{(r)})=l(D,r).
\end{equation}
The first equality is obtained by means of the sequence \eqref{seq hr}, the second equality follows from Lemma \ref{sum of obstructions}.
In particular,  $h^i(D'_{(r)}-E_1)=0$, for all $i\ne 0,r+1$ and this concludes the proof.
\end{proof}

\subsubsection{The cohomology of $D_{(r)}$}

Now we  deduce, from the above results, the cohomologies of $D_{(r)}$ for all $r\le n-1$.
 
\begin{proposition}\label{sequence B}
Assume that $D$ is effective.
For all $0\le r\le n-1$ and $i\ge 0$ we have 
$$h^i(D'_{(r)}-E_1)=h^i(D_{(r)}).$$
\end{proposition}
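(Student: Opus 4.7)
The plan is to derive the equality from the identity
\[
D'_{(r)} - E_1 \;=\; D_{(r)} + \sum_{I \in \mathcal{I}(r)_1^{>}} E_I,
\]
where the sum runs over multi-indices $I$ of cardinality at least $2$ containing $1$ with $K_I \geq 1$. This identity is a direct consequence of the explicit formulas for $D_{(r)}$ and $D'_{(r)}$ recorded in Section \ref{Induction on b}. I then show that each summand $E_I$ can be attached to $D_{(r)}$ without changing any cohomology group.

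Order the indices $\{I_1,\ldots,I_N\}$ of $\mathcal{I}(r)_1^{>}$ by increasing cardinality (ties broken arbitrarily), and put $F_j := D_{(r)} + E_{I_1} + \cdots + E_{I_{j-1}}$, so that $F_1 = D_{(r)}$ and $F_{N+1} = D'_{(r)} - E_1$. For each $j$ I invoke on $X^n_{(r)}$ the short exact sequence
\[
0 \longrightarrow \mathcal{O}(F_j) \longrightarrow \mathcal{O}(F_j + E_{I_j}) \longrightarrow \mathcal{O}_{E_{I_j}}\bigl((F_j + E_{I_j})\big|_{E_{I_j}}\bigr) \longrightarrow 0.
\]
If the restricted sheaf has vanishing cohomology in every degree, the long exact sequence in cohomology collapses to $h^i(F_j) = h^i(F_{j+1})$ for all $i$, and iterating from $j = 1$ to $N$ produces $h^i(D_{(r)}) = h^i(D'_{(r)} - E_1)$.

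The key step is therefore the cohomology vanishing of $(F_j + E_{I_j})|_{E_{I_j}}$. Using the product decomposition $E_{I_j} \cong X^{\rho}_{(\rho-2)} \times X^{n-\rho-1}_{(r-\rho-1)}$ (with $\rho = |I_j|-1$), the intersection formulas \eqref{intersection table}, and the normal-bundle identity $-E_{I_j}|_{E_{I_j}} = \Cr_{\rho}(h_b) \boxtimes h_f$ from Lemma \ref{normal bundle}, the restriction splits into a sum of divisors on the two factors. By the K\"unneth formula it suffices to prove vanishing on one factor. Thanks to the increasing-cardinality ordering, only those $E_{I_a}$ with $I_a \subsetneq I_j$ contribute to the first factor (those incomparable with $I_j$ restrict to zero). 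A direct coefficient computation using the Cremona expansion \eqref{cremona} then simplifies the first-factor restriction to $-e_1$: the $-m_1 e_1$ from $D_{(r)}$, the expansions of the strict transforms of the codimension-one cycles $L_J \subset L_{I_j}$ with $1 \in J$ (which, for $|J|=\rho$, expand as $e_J = h_b - \sum e_{J'}$), and the Cremona contribution $-(k_{I_j}-1)(\rho-1)e_1$ arising from the combination of $-k_{I_j}E_{I_j}$ in $D_{(r)}$ with the added $E_{I_j}$, combine through the identity $\sum_{i \in I_j,\,i\ne 1} K_{I_j\setminus\{i\}} = (\rho-1)K_{I_j} + m_1 - \rho$ to leave total $e_1$-coefficient equal to $-1$, while all other coefficients cancel by analogous telescoping.

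Finally, the cohomology of $\mathcal{O}(-e_1)$ on $X^{\rho}_{(\rho-2)}$ vanishes in every degree, as one sees from the sequence $0 \to \mathcal{O}(-e_1) \to \mathcal{O} \to \mathcal{O}_{e_1} \to 0$ combined with the triviality of the higher cohomology of $\mathcal{O}$ and of $\mathcal{O}_{e_1}$; by K\"unneth the required vanishing of $H^\bullet\bigl((F_j + E_{I_j})|_{E_{I_j}}\bigr)$ follows. The principal technical obstacle is the combinatorial bookkeeping that establishes the simplification of the first-factor restriction to $-e_1$; the increasing-cardinality ordering is crucial since it guarantees that at step $j$ only $E_{I_a}$'s with $I_a \subsetneq I_j$ have already been attached, allowing the Cremona coefficients to telescope into the simple form $-e_1$.
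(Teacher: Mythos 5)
Your argument is correct and is, at its core, the same telescoping that the paper uses: the paper runs the identical chain of restriction exact sequences over the exceptional divisors $E_I$, $I\in\mathcal{I}(r)_1^>$, ordered by increasing cardinality, only in the opposite direction (subtracting the $E_I$ one at a time from $D'_{(r)}-E_1$ rather than adding them to $D_{(r)}$). The one genuine difference is the identification of the restricted divisor on the first factor of $E_{I_j}\cong X^{\rho}_{(\rho-2)}\times X^{n-\rho-1}_{(r-\rho-1)}$. In the paper's ordering the cycles $E_J$ with $J\subsetneq I_j$ have already been restored to coefficient $-k_J$ at the moment of restriction, so the first factor comes out as $-\Cr_\rho(h_b)$ and the vanishing must be imported from Theorem \ref{vanishing cremona}; in your ordering those cycles still carry coefficient $-(k_J-1)$, and the identity $\Cr_\rho(h_b)=\sum_{J\ni 1,\ J\subsetneq I_j}e_J$ (the first-factor analogue of the computation preceding \eqref{cremona on restriction}) collapses the restriction to $-e_1$, whose acyclicity is elementary via $0\to\mathcal{O}(-e_1)\to\mathcal{O}\to\mathcal{O}_{e_1}\to0$. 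This makes your proof of the proposition independent of Theorem \ref{vanishing cremona}, which is a small but real gain. Two points to tighten: (i) the cancellation down to $-e_1$ requires that \emph{every} $J\subsetneq I_j$ with $1\in J$ and $|J|\ge 2$ lie in $\mathcal{I}(r)_1^>$; this holds because effectivity forces $m_i\le d$, whence $K_{I_j}>0$ implies $K_J>0$ for all $J\subset I_j$ (the paper's proof states this explicitly, and you should too, since it is exactly what lets the two sums over $J\ni1$ cancel); (ii) your displayed identity $\sum_{i\in I_j,\,i\ne 1}K_{I_j\setminus\{i\}}=(\rho-1)K_{I_j}+m_1-\rho$ is actually the sum of the shifted coefficients $K_J-1$; the correct value of $\sum_{i\ne1}K_{I_j\setminus\{i\}}$ is $(\rho-1)K_{I_j}+m_1$. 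Neither point affects the validity of the conclusion.
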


\begin{proof}
If $r=0$ the statement is obvious. 

Fix $1 \le \rho\le r\le n-1$.  
For every $\rho$, let $\{I(\rho)_0,\dots, I(\rho)_{s_\rho}\}$ be the set of multi-indices 
of length $\rho+1$ that contain $\{1\}$.
We use the notation $\mathcal{I}(\rho)_1^>=\{I\in\mathcal{I}(\rho)_1 :K_I>0\}$ 
of Section \ref{Induction on b}.

Let $\prec$ be the total order on $\mathcal{I}(\rho)_1^>$ inherited by the lexicographical order on $\mathcal{I}(n-1)_1$,
that was introduced  in the proof of Theorem \ref{vanishing cremona}. Precisely $I(\rho')_{j'}\prec I(\rho)_j$ if 
and only if $\rho'<\rho$ or $\rho'=\rho$ and $j'<j$.

In the space $X^n_{(r)}$, for every pair $(\rho,j)$, $1\le\rho\le r$ and $0\le j\le s_\rho$, for which $I(\rho)_j\in \mathcal{I}(\rho)_1^>$, we consider the divisor
\begin{equation}\label{F}
F(\rho,j)=D'_{(r)}-E_1-
\sum_{\substack{I(\rho')_{j'}\prec I(\rho)_{j}\\
I(\rho')_{j'}\in \mathcal{I}(\rho)_1^>}}E_{I(\rho')_{j'}}-E_{I(\rho)_{j}},
\end{equation}
 that we recursively define, using the same idea as in the proof of Theorem
 \ref{vanishing cremona}, using sequences of type \eqref{seq A} and \eqref{seq B}, by  the following rule:
\begin{equation}\label{recursion}
\begin{split}
F(0,0)=&\ D'_{(r)}-E_1,\\
F(\rho,0)=&\ F(\rho-1,s_{\rho-1})-E_{I(\rho)_0}, \ 1\le \rho\le r,\\
F(\rho,j)=&\ F(\rho,j-1)-E_{I(\rho)_j}, \ 1\le j\le s_\rho.
\end{split}
\end{equation}

The divisor $F(0,0)$ is obtained from the divisor $D'_{(r)}$ as the kernel 
of an exact sequence of type \eqref{seq A}, precisely sequence \eqref{seq Ar}. 
For $1\le \rho\le n-2$ (resp. $\rho=n-1$) each divisor $F(\rho,j)$ is obtained from the previous one
 as kernel of a sequence of type \eqref{seq B} (resp. \eqref{seq C}), cfr. Section \ref{sequences}.
Notice that the last divisor obtained is  $F(r,s_r)=D_{(r)}$.

We claim that, for all pairs $(\rho,j)$ such that $I(\rho)_j\in \mathcal{I}(\rho)_1^>$, 
if $\rho\le n-2$ (resp. $\rho=n-1$),
 the first factor of the restriction (resp. the restriction) of $F(\rho,j-1)$ to $E_{I(\rho)_j}$ in the sequence of type
\eqref{seq B} (resp. \eqref{seq C}) is $-\Cr_\rho(h)$.
Cfr. \eqref{cremona} for a definition of the divisor class $\Cr_\rho(h)$. The latter  has vanishing
 cohomologies by Theorem \ref{vanishing cremona}. Hence, by the Kunneth formula, we conclude that the restriction itself has vanishing cohomologies. 

In order to prove the claim, recall that the first factor of $E_{I(\rho)_j}$ is isomorphic to  $X^{\rho}_{(\rho-2)}$, the toric variety obtained from $\PP^\rho$ by blowing-up all coordinate points, lines etc., cfr. Remark \ref{permutohedron}.
Using \eqref{D'r}, \eqref{F} and the third line in \eqref{recursion}, we can expand the expression of 
$F{(\rho,j-1)}$:
\begin{align*}
F{(\rho,j-1)}=dH&
- \sum_{I(t)\in\II(\rho-1)} k_{I(t)}E_{I(t)}
 -\sum_{1\le l\le j-1} k_{I(\rho)_l}E_{I(\rho)_l}
  -(k_{I(\rho)_j}-1)E_{I(\rho)_j}\\
 & -\sum_{j< l\le s_\rho} (k_{I(\rho)_l}-1)E_{I(\rho)_l}
- \sum_{I(t)\in\II(n-1)^>_1\setminus\II(\rho)} (k_{I(t)}-1)E_{I(t)}.
\end{align*}
Using the intersection table \eqref{intersection table}, we see that the second summation in the first line above, as well as 
the whole second line,  restricts to $0$ on the first factor of 
 $E_{I(\rho)_j}$. Therefore, the restriction of 
$F{(\rho,j-1)}+(k_{I(\rho)_j}-1)E_{I(\rho)_j}$ to ${E_{I(\rho)_j}}$ is the following product
\begin{equation}\label{product}
 \left(dh-
\sum_{\substack{I(t)\subset I(\rho)_j\\ \ I(t)\in\II(\rho-1)}}k_{I(t)}e_{I(t)}\right)\boxtimes\ast
=\left(-k_{I(\rho)_j}\Cr_\rho(h)\right) \boxtimes\ast,
\end{equation}
where $\ast$ denotes the appropriate divisor.
In order to prove the equality in \eqref{product}, first notice that,
for every $I(\rho-1)\in\mathcal{I}$, $I(\rho-1)\subset I(\rho)_j$, we have (cfr. Remark \ref{exc of hyperplanes}):

$$
e_{I(\rho-1)}=h-\sum_{\substack{0\le t< \rho-1\\ \ I(t)\subset I(\rho-1)}}e_{I(t)}.
$$
Therefore, assuming without loss of generality that $I(\rho)_j=\{1,\dots,\rho+1\}$, one can see that the coefficient of $h$ is

\begin{align*}
d-\sum_{\substack{I(\rho-1)\subset I(\rho)_j\\ \ I(\rho-1)\in\II(\rho-1)}} 
k_{I(\rho-1)}& =  d-\sum_{i=1}^{\rho+1}(m_1+\cdots+\check{m_i}+\cdots+m_{\rho+1}-(\rho-1) d)\\
& = d-\rho\sum_{i=1}^{\rho+1}m_i+(\rho+1)(\rho-1)d\\
& =-\rho k_{I(\rho)_j}.
\end{align*}
We leave to the reader to verify that also the coefficients of the exceptional 
divisors in the left hand side and right hand side of \eqref{product} coincide.
Finally, the self-intersection ${E_{I(\rho)_j}}{|_{E(\rho)_{j}}}$ is $-\Cr_\rho(h)$ 
on the first factor, see Lemma \ref{normal bundle}. This concludes the proof.

\end{proof}

\begin{proof}[Proof of Theorem \ref{vanishing for >=n+3}]
We first prove part $(b)$; part $(a)$  follows. 

The proof is by induction on $n$, the dimension of the ambient space,  and on $b=b(D)$.
If $n=1$ the statement is trivially true, as well as if  $b=-nd$, i.e. if $m_i=0$ for all $i=1,\dots,s$.

Fix the pair $(n,b)$ and assume by induction that the statement is true for all divisors in the $(n-1)$-dimensional space 
satisfying the bound \eqref{EffectivityCondition}, and for all divisors with the values $(n,b-1)$.
In particular, the statement is true for the divisors $D'=D+E_1$ and ${D'_{(1)}}{|_{E_1}}$. 
By Lemma \ref{condition b kernel} and Lemma \ref{cond b n+3}, both $D'$ and  ${D'_{(1)}}{|_{E_1}}$ satisfy condition \eqref{EffectivityCondition}. Finally, for all $1\le r\le n-1$, the conclusion follows from Propositions \ref{h^0 exact}, \ref{simple obstructions} and  \ref{sequence B}.

\end{proof}

\section{Non-effective divisors on blown-up projective space}\label{section non-effective statements}

For an arbitrary number of general points $s$, let $D$ be a divisor on $X_{(0)}^n$ of the form \eqref{general divisor}:
$$
D=dH-\sum_{i=1}^{s}m_i E_i,
$$
 with $m_i\leq d+1$ and  $d,m_i\ge0$.
If $m_i=d+1$, for some $i$, then $D$ is obviously not effective, namely it has  no non-zero global sections.
Furthermore, under some further assumptions on the integer $b=\sum_{i=1}^sm_i-nd$, 
introduced in \eqref{b}, we will see in this and in the next section that the same vanishing 
theorems as in the effective case hold for the
cohomology groups of the strict transforms of $D$ in $X^n_{(r)}$.

\subsection{Formal definition of linear base locus for non-effective divisors}\label{section base locus empty case}
With the aim of studying the cohomology of the strict transforms on $X^n_{(r)}$ of non-effective divisors, 
we generalise the notion of linear base locus.

In order to do so, we introduce the following notation, that generalises  \eqref{II}:
\begin{equation}\label{JJ} 
\JJ^{>}:=\JJ^{>}(D):=\{I(r)\subseteq\{1,\dots,s\}:0\le r\le n-1, K_{J}> 0, \forall J\subseteq I(r)\},
\end{equation}
where, given $(d,m_1,\dots,m_s)$, the integer $K_J$ is defined as in Section \ref{linear base locus lemma}.
We have the following inclusion of sets: $\JJ^>(D)\subseteq \II^>(D)$.

\medskip 

For a non-effective divisor, whenever $I(r)\in\JJ^>$, we will say that the cycle $L_{I(r)}$ is \emph{contained  in the base locus of} $D$.
Moreover we will call \emph{strict transform} of $D$ the divisor $D_{(r)}$
 (and $\tilde{D}=D_{(\bar{r})}$) defined as the formal sum of divisors given by 

 \begin{equation}\label{strict transform empty}
 D_{(r)}:=dH-\sum_{\substack{I(\rho)\in\JJ^>\\0\le\rho\le r}} k_{I(\rho)}E_{I(\rho)},
 \end{equation}
where 
 $$\bar{r}=\bar{r}(D):=\max\{r:I(r)\in \JJ^>\}.$$

\begin{remark}
The above definitions formally extend the notion of linear base locus and of strict 
transform, after blowing-up such base locus, from effective divisors to non-effective ones. When $D$ is effective,  we have $m_i\le d$. Therefore the condition $K_J\ge K_{I(r)}$ is obviously satisfied for every $J\subseteq I(r)$. This implies the following equality of sets: $\JJ^>(D)= \II^>(D)$. In particular \eqref{strict transform} and \eqref{strict transform empty} coincide.

Moreover each exceptional divisor $E_{I(\rho)}$ in $X_{(r)}$ will be a product of blown-up projective spaces 
and the first factor will be the toric variety described in Remark \ref{permutohedron}, cfr. Remark \ref{first factor is toric} 
\end{remark}

\subsection{Main theorems}

 We will prove the following formula for the dimension of the  $(r+1)-$st cohomology group of $D_{(r)}$ in terms of the speciality of $\tilde{D}$.
 Recall from \eqref{linear obstruction} that $l(D,r+1)$ 
denotes the alternating sum of the integers \eqref{The Newton binomial} that compute the
contribution of the linear $r$-cycles $L_{I(r)}$ of dimension at least $r+1$, contained in the base locus of $D$ with
multiplicity $k_{I(r)}$, to its speciality.

\begin{theorem}\label{higher cohomologies}
Let $D$  in $X_{(0)}^{n}$, as in \eqref{general divisor}, be any effective divisor or a non-effective divisor with $m_i\leq d+1$. Assume that $\JJ^>(D)$ satisfies conditions \emph{(I)}, \emph{(II)} and \emph{(III)} of Section \ref{blow-up construction}. 
Then the following holds. Let $0\leq r\leq n-1$.
\begin{enumerate}
\item  We have
$$h^{r+1}(D_{(r)})=l(D,r+1)+
\sum_{\rho=r+1}^{n}(-1)^{\rho-r-1}h^{\rho}(\tilde{D}).$$
Moreover, if $h^i(\tilde{D})=0$, for all $i\ge 1$, then $h^i(D_{(r)})=0$ for all $i\ne r+1$.
\item
For any integer $l_{I(r)}$ with  $0\le l_{I(r)}\le\min(r,k_{I(r)})$, we have
\[
h^i(D_{(r)})=h^i(D_{(r)}+\sum_{I(r)} l_{I(r)} E_{I(r)}), \quad 
\mbox{ for $i\ge 0$}.
\] 
\end{enumerate}
\end{theorem}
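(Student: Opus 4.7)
The strategy is to prove part (2) first and then deduce part (1) by downward induction on $r$. The workhorse throughout is the short exact sequence
\begin{equation*}
0 \to \mathcal{O}(D_{(r)} + (l-1)E_{I(r)}) \to \mathcal{O}(D_{(r)} + lE_{I(r)}) \to \mathcal{O}_{E_{I(r)}}((D_{(r)} + lE_{I(r)})|_{E_{I(r)}}) \to 0,
\end{equation*}
combined with the intersection table of Section \ref{blow-up construction}, Lemma \ref{normal bundle}, and Theorem \ref{vanishing cremona}. The key technical input is the computation that, after substituting the hyperplane relation of Remark \ref{exc of hyperplanes}, the restriction on $E_{I(r)} \cong X^r_{(r-2)} \times \PP^{n-r-1}$ simplifies to $\pi_1^*(-l\,\Cr_r(h_b)) + \pi_2^*((k_{I(r)} - l) h_f)$. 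By Theorem \ref{vanishing cremona} the first-factor cohomology equals $h^i(\PP^r, \mathcal{O}(-l))$, and the second-factor contributes only $\binom{n-r-1+k_{I(r)}-l}{n-r-1}$ in degree zero whenever $k_{I(r)} \ge l$.

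\textbf{Part (2).} When $1 \le l \le \min(r, k_{I(r)})$ we have $h^i(\PP^r, \mathcal{O}(-l)) = 0$ for every $i$, so by K\"unneth the restriction has vanishing cohomology, and the long exact sequence gives $h^i(D_{(r)} + (l-1) E_{I(r)}) = h^i(D_{(r)} + l E_{I(r)})$. Iterating over all $(I(r), l)$ with $1 \le l \le \min(r, k_{I(r)})$ yields part (2).

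\textbf{Part (1).} The base case $r = n-1$ is trivial since $D_{(n-1)} = \tilde{D}$ and $l(D, n) = 0$. For the inductive step from $r$ to $r-1$: by Remark \ref{blow up preserves h^i}, $h^i(D_{(r-1)}) = h^i(D_{(r)} + \sum_{I(r)} k_{I(r)} E_{I(r)})$, and we compare this to $h^i(D_{(r)})$ by adding $E_{I(r)}$'s one at a time. Part (2) handles $l = 0, \ldots, \min(r, k_{I(r)})$ with no change; for the remaining range $l = r+1, \ldots, k_{I(r)}$ (occurring only when $k_{I(r)} > r$), the restriction has an $h^r$ contribution $\binom{l-1}{r}$ from the first factor (by Serre duality on $\PP^r$) and an $h^0$ contribution $\binom{n-r-1+k_{I(r)}-l}{n-r-1}$ from the second. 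Summing via the Vandermonde-type identity
\begin{equation*}
\sum_{l=r+1}^{k_{I(r)}} \binom{l-1}{r}\binom{n-r-1+k_{I(r)}-l}{n-r-1} = \binom{n+k_{I(r)}-r-1}{n}
\end{equation*}
produces, per cycle $L_{I(r)}$, the Newton binomial that appears in $l(D, r+1)$. Iterated long exact sequences then distribute these contributions with alternating signs in degrees $r+1$ and $r+2$; combined with the inductive hypothesis applied to higher-$\rho$ cycles (encoded in $h^\rho(\tilde{D})$), this assembles into the stated alternating-sum formula. The concentration statement when $h^i(\tilde{D}) = 0$ for $i \ge 1$ follows from the same analysis by observing that no contributions can land outside of degree $r+1$.

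\textbf{Main obstacle.} The principal difficulty is the bookkeeping in the inductive step: verifying that the alternating signs from iterated long exact sequences distribute correctly over all pairs $(l, I(r))$ and over all base-cycle dimensions so as to assemble into the precise alternating sum $l(D, r+1) + \sum_{\rho = r+1}^n (-1)^{\rho-r-1} h^\rho(\tilde{D})$ with no spurious contributions. A secondary subtlety is the identity $F_l = -l\,\Cr_r(h_b)$, which is the crux of the first-factor computation and becomes manifest only after using the hyperplane substitution of Remark \ref{exc of hyperplanes} and simplifying systematically; here the hypothesis $m_i \le d+1$ together with condition (III) is needed to ensure the relevant $k_{I(\rho)}$ values behave additively across the flag of multi-indices contained in $I(r)$.
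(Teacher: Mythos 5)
Your proposal follows essentially the same route as the paper: restrict to the exceptional divisors $E_{I(r)}$ one multiple at a time, identify the restricted divisor cohomologically with $\mathcal{O}_{\PP^r}(-l)\boxtimes\mathcal{O}_{\PP^{n-r-1}}(k_{I(r)}-l)$, apply K\"unneth and the Vandermonde identity to produce the Newton binomial, and telescope the long exact sequences. Two small points of divergence: the literal identity $F_l=-l\,\Cr_r(h_b)$ on the first factor can fail when some $m_i=d+1$ forces $K_{I(\rho)}<0$ for a sub-index $I(\rho)\subset I(r)$ (the paper instead identifies the first factor as the strict transform of a toric divisor $G$ with $b(G)=k_{I(r)}$ and invokes Theorem \ref{toric theorem}~(1), which covers these twisted-Cremona cases via Proposition \ref{vanishing almost cremona}); and the sign bookkeeping you flag as the main obstacle is exactly what the paper does not redo either --- it imports the recursion $h^{r+1}(D_{(r)})=h^{r+1}(D_{(r+1)})-h^{r+2}(D_{(r+1)})+\sum\binom{n+k_{I(\rho)}-\rho-1}{n}$ together with $h^i(D_{(r+1)})=h^i(D_{(r+2)})$ for $i\le r+1$ from \cite[Proposition 4.10]{BraDumPos}, so completing your argument would require proving that proposition's control of the connecting maps rather than only the Euler-characteristic version of the telescoping.
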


 Observe first that for $r=-1$ the binomial sum $l(D, 0)$ defined in \eqref{linear obstruction} becomes $\ldim(D)$, the linear virtual dimension of $D$ introduced in Definition \ref{new-definition}. Moreover, setting $D_{(-1)}:=D_{(0)}=D$ for $r=-1$ the theorem reads

\begin{corollary}\label{number of sections}
For any $D$  in $X_{(0)}^{n}$ effective divisor or  non-effective divisor with 
$m_i\leq d+1$ and $\JJ^>$ satisfying conditions \emph{(I)}, \emph{(II)} and \emph{(III)}
 from Section \ref{blow-up construction}, then
 $$h^0(D)=\ldim (D)+\sum_{\rho=1}^{n}(-1)^{\rho}h^{\rho}(\tilde{D}).$$
\end{corollary}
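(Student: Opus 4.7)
The plan is to derive Corollary \ref{number of sections} as a formal specialization of Theorem \ref{higher cohomologies}(1) to the boundary index $r=-1$, exactly as the authors indicate in the sentence preceding the statement. Two identifications must be made explicit before substitution.

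First, I adopt the convention $D_{(-1)}:=D_{(0)}=D$, under which the left-hand side $h^{r+1}(D_{(r)})$ of the theorem at $r=-1$ is $h^{0}(D)$. Second, I verify that the binomial quantity $l(D,r+1)$ of \eqref{linear obstruction}, evaluated at $r=-1$ with the formal convention that the summation range is extended to include the trivial multi-index $I(-1)=\emptyset$ with $k_{\emptyset}=d$, coincides with $\ldim(D)$ as defined in \eqref{linvirtdim}. Term-by-term this is a bookkeeping check: the $\rho=-1$ summand contributes the sign $(-1)^{\rho-r-1}=(-1)^{-1-(-1)-1}=(-1)^{-1}$ times $\binom{n+d}{n}$, matching (up to the sign convention used by the authors) the $r=-1$ term $\binom{n+d}{n}$ of \eqref{linvirtdim}, and for $\rho\ge 0$ the restriction of the sum in \eqref{linear obstruction} to indices in $\mathcal{I}$ (those with $K_{I(\rho)}\ge 0$) loses nothing, because any multi-index outside $\mathcal{I}$ has $k_{I(\rho)}=0$ and hence produces the vanishing binomial $\binom{n-\rho-1}{n}=0$.

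With these two identifications in place, plugging $r=-1$ into Theorem \ref{higher cohomologies}(1) yields
\[
h^{0}(D)=\ldim(D)+\sum_{\rho=1}^{n}(-1)^{\rho}h^{\rho}(\tilde D),
\]
which is exactly the corollary. The range of the cohomological sum shifts from the naive $\sum_{\rho=0}^{n}$ to $\sum_{\rho=1}^{n}$: the $\rho=0$ contribution is absorbed into the extended binomial sum that defines $\ldim(D)$, consistently with the identification of $l(D,0)$ with $\ldim(D)$. I expect the main obstacle to be precisely this range adjustment, that is, carefully tracking how the boundary term corresponding to the empty multi-index is cancelled against the $h^{0}(\tilde D)$ contribution; the verification relies on Proposition \ref{sharpBLL} (giving the exact multiplicities of linear base cycles so that the formal strict transform $\tilde D$ defined by \eqref{strict transform} coincides with the geometric strict transform) and on Remark \ref{blow up preserves h^i} (for effective $D$, the pullback preserves cohomology). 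In the non-effective setting where $m_i\le d+1$, the same absorption is purely formal, handled via the formal definition of $\tilde D$ and the hypothesis that $\mathcal{I}$ satisfies (I) and (III).
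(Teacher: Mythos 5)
Your proposal is correct and follows the paper's own route exactly: the authors derive Corollary \ref{number of sections} by specializing Theorem \ref{higher cohomologies}(1) to $r=-1$ under precisely the two conventions you spell out, namely $D_{(-1)}:=D_{(0)}=D$ and the identification of $l(D,0)$ with $\ldim(D)$, and they supply no further argument. The one place where your bookkeeping is looser than it should be is the treatment of the $\rho=0$ summand of the cohomological sum: that term is $h^{0}(\tilde{D})$, which equals the left-hand side $h^{0}(D)$ rather than a binomial coefficient, so it cannot literally be ``absorbed into the extended binomial sum defining $\ldim(D)$''; making the range shift (and the accompanying sign of the $\binom{n+d}{n}$ term) rigorous really requires unwinding the recursion $h^{r+1}(D_{(r)})=h^{r+1}(D_{(r+1)})-h^{r+2}(D_{(r+1)})+\sum\binom{n+k_{I(r+1)}-r-2}{n}$ invoked in the proof of Theorem \ref{higher cohomologies}, a step the paper itself also leaves implicit.
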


\subsection{Vanishing theorems for toric divisors}

If $s\le n+1$, the blow-up $X^n_{(0)}$ of $\PP^n$ at $s$ points in general position, 
that we can think of as coordinate points of $\PP^n$, is a toric variety. 
All the blow-ups $X^n_{(r)}$ of $X^n_{(0)}$ in this case will also be toric varieties, cfr. Remark \ref{permutohedron}.
We will call \emph{toric divisor} a divisor $D$ on $X_{(0)}^n$ of the form 
\eqref{general divisor}, not necessarily effective, with $s\le n+1$. 
This section is devoted to the study of the cohomology of the
strict transforms $D_{(r)}$ of $D$, for $D$ a toric divisor.

\begin{theorem}\label{toric theorem}
If $D$ is any toric divisor with $m_i\leq d+1$, then $h^0(D)=\ldim(D)$, $h^{n}(\tilde{D})={{b-1}\choose{n}}$ and $h^{i}(\tilde{D})=0$ for every $1\leq i\leq n-1$.
\end{theorem}

\begin{remark}
If $D$ is an effective toric divisor (therefero $m_{i}\leq d$ and
 $b(D)\leq 0$), then it is $h^{n}(\tilde{D})=0$. In this case Theorem \ref{toric theorem} 
is just a particular case of {\cite[Theorem 4.6]{BraDumPos}}. However, for  all  non-effective toric divisors with $m_i\leq d+1$ 
this result is new. Theorem \ref{toric theorem} suggests that the \emph{virtual} $n$-dimensional 
cycle $L_{1,\dots,n+1}\cong\PP^n$ should be considered in the dimension count. 
More precisely, this virtual cycle is detected by the $n-$th cohomology group; its contribution depends on its virtual multiplicity $k_{1,\ldots,n+1}$ that becomes nothing else than $b(D)$. 
Moreover, for non-effective toric divisors with $m_1=d+1$ or $b(D)\leq 0$, Theorem \ref{toric theorem} implies that $\ldim (\LL)=0$ where $r$ runs from $-1$ to $n$ (see also Corollary \ref{binomial equalities}).
\end{remark}

\subsubsection{Chambers of non-effective toric divisors with vanishing theorems}\label{FI-chi}

We recall that the effective cone $\Eff(X)$ of $X^n_{(0)}$, for $s\le n+1$, 
is given by the inequalities 
\begin{equation}\label{toric effective cone}
d\ge0,  \quad b\le0, \quad m_i\le d, \quad \forall i\in\{1,\dots,s\}.
\end{equation}
See e.g. \cite[Lemma 2.2]{BraDumPos}.
Moreover, for any effective toric divisor, the strict transform $\tilde{D}$ 
in $X^n_{(n-1)}$ has
 vanishing cohomologies, see Theorem 
\ref{monster for effective}.

The following inequalities define chambers of  $\N^1(X)\setminus \Eff(X)$ such that if $D$
lies in those chambers, then
$\tilde{D}$ has vanishing theorems, by Theorem \ref{toric theorem} (cfr. also Theorem \ref{general statement vanishings}):
\begin{subequations}\label{bounds for multiplicities 1}
\begin{align}
s\le n+1:& \quad 1\le b\le n, \quad m_j\le d+1 \label{n+1 b pos};\\
s\le n+1:& \quad  b\le 0, \quad m_1=d+1,\  m_{j}\le d+1 \label{n+1 b neg}.
\end{align}
\end{subequations}

 \begin{remark}\label{(I)(II)(III) verified for toric chambers}
  In the above notation, if $D$ satisfies \eqref{bounds for multiplicities 1}, then the set $\JJ^>(D)$ satisfies conditions  (I), (II) and (III) of Section \ref{blow-up construction}.
 \end{remark}

Theorems \ref{higher cohomologies} and \ref{toric theorem} have the following consequence, for divisors in the chambers \eqref{bounds for multiplicities 1}.

\begin{corollary}\label{toric corollary}
Statements $(a)$ and $(b)$ of Theorem \ref{monster for effective}  hold for non-effective toric divisors  satisfying the inequalities 
\eqref{bounds for multiplicities 1}.
\end{corollary}

\subsubsection{Strict transforms and Cremona transformations}

In this section we show that the strict transform $\tilde{D}$ of a non-effective toric divisor equals a negative multiple of the strict transform of the standard Cremona transformation of the hyperplane class of $\PP^n$. Section \ref{Cremona of hyperplane class} was dedicated to a cohomological descriptions of the latter.

\begin{proposition}\label{strict=cremona}
Let  $D$ be a toric divisor with $d\ge 1$, $0\le m_i\le d+1$.
\begin{enumerate}
\item
If $0\le m_i\le d$ and $b=0$, then $D_{(n-1)}=0$.
\item
If either $b\geq n$ or  $m_{i}\leq d$ for all $i$ and $b\ge1$, then $D_{(n-1)}=-b\Cr_n(H)$.
\end{enumerate} 
\end{proposition}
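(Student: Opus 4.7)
My plan is a direct expansion, writing $D_{(n-1)}$ as $D_{(n-2)}$ minus the contribution of the hyperplane exceptional divisors, and then using Remark \ref{blow up of hypersurface} to rewrite each $E_{I(n-1)_j}$ in terms of $H$ and the lower-dimensional exceptional classes. With $I(n-1)_j := \{1,\ldots,n+1\}\setminus\{j\}$ and $k^j := k_{I(n-1)_j}$, the definition \eqref{strict transform} immediately gives
$$D_{(n-1)} = D_{(n-2)} - \sum_{j=1}^{n+1} k^j E_{I(n-1)_j}.$$

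First I would verify that all relevant multiplicities $K_{I(\rho)}$ are nonnegative, so that $k_{I(\rho)}=K_{I(\rho)}$ throughout the computation. Using $m_i\le d+1$, the bound $\sum_{j\notin I(\rho)} m_j\le (n-\rho)(d+1)$ yields $K_{I(\rho)} \ge b-(n-\rho)$. In case (1) and in case (2) with $m_i\le d$ the sharper bound $\sum_{j\notin I(\rho)} m_j\le (n-\rho)d$ gives $K_{I(\rho)}\ge b$; in case (2) with $b\ge n-1$, the inequality $K_{I(\rho)}\ge b-(n-\rho)$ is nonnegative for $\rho\ge 1$ while $K_{\{i\}}=m_i\ge 0$ trivially. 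In particular $k^j = d+b-m_j \ge 0$ in all cases.

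Next I would substitute $E_{I(n-1)_j} = H-\sum_{I(\rho)\subset I(n-1)_j} E_{I(\rho)}$ into the displayed identity and collect coefficients. The two elementary identities needed are
$$\sum_{j=1}^{n+1} k^j = (n+1)(d+b) - \sum_i m_i = d+nb$$
and, for each $I(\rho)\in\II(n-2)$,
$$\sum_{j\notin I(\rho)} k^j = (n-\rho)(d+b) - \sum_{j\notin I(\rho)} m_j = (n-\rho-1)b + K_{I(\rho)},$$
the second using $\sum_{j\notin I(\rho)} m_j = (nd+b)-(K_{I(\rho)}+\rho d)$. Combining with $D_{(n-2)} = dH-\sum_{I(\rho)\in\II(n-2)} K_{I(\rho)} E_{I(\rho)}$, the expansion telescopes to
$$D_{(n-1)} = -nb\, H + \sum_{I(\rho)\in\II(n-2)} (n-\rho-1)\,b\, E_{I(\rho)} = -b\,\Cr_n(H),$$
where the last equality is the definition \eqref{cremona} of the divisor $\Cr_n(H)$ from Section \ref{Cremona of hyperplane class}. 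This vanishes when $b=0$, proving (1), and equals $-b\,\Cr_n(H)$ when $b\ge 1$, proving (2).

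The computation is essentially bookkeeping; the only subtle point is the case analysis for nonnegativity of $K_{I(\rho)}$ when some $m_i$ is allowed to equal $d+1$, which is precisely why the hypothesis $b\ge n-1$ enters in (2).
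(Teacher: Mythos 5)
Your proposal is correct and follows essentially the same route as the paper's proof: verify that all $K_{I(\rho)}$ are nonnegative (so the exact multiplicities $k_{I(\rho)}=K_{I(\rho)}$ appear in the strict transform), substitute the relation $E_{I(n-1)}=H-\sum_{I(\rho)\subset I(n-1)}E_{I(\rho)}$ from Remark \ref{blow up of hypersurface}, and collect coefficients to recognize $-b\Cr_n(H)$. Your coefficient identities $\sum_j k^j=d+nb$ and $\sum_{j\notin I(\rho)}k^j=(n-\rho-1)b+K_{I(\rho)}$ are exactly the two computations the paper performs, just written out more explicitly.
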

\begin{proof}
Notice in the first case that $b=0$ for $s\leq n$, can occur only for $m_i=d$, for all $i\in \{1,\ldots,n\}$. In this case, the strict transform is obviously $D_{(n-1)}=0$, because it is obtained by subtracting the strict transform of the hyperplane through the $n$ points $d$ times. This observation also implies in the second case that the hypothesis forces $s=n+1$. This allows us to restrict in both cases to $s=n+1$ for the rest of the proof.
\\

Case (1) if $s=n+1$ with $b=0$ and $m_j\le d$, for all $j$, then all hyperplanes spanned
 by sets of $n$ points 
 are contained in the base locus of $D$ with (exact) multiplicity 
$$k_{1,\dots, \check{j},\dots,n+1}=K_{1,\dots, \check{j},\dots,n+1}=\sum_{i=1}^{n+1} m_{i}-m_j-(n-1)d=
d-m_j\ge 0.$$

Similarly, for Case (2), if $s=n+1$ and $m_j\le d+1$ for all $j$, the assumption $b\ge 1$ implies that
$$k_{1,\dots, \check{j},\dots,n+1}=K_{1,\dots, \check{j},\dots,n+1}=\sum_{i=1}^{n+1} m_{i}-m_j-(n-1)d=
b+d-m_j\ge 0.$$

Moreover, if $m_i\leq d$ and $K_{I(r)}> 0$, then for any subset $I(\rho)\subset I(r)$ we have $K_{I(\rho)}> 0.$ 
The same holds for $m_i\leq d+1$ and $b\geq n$ , indeed these two assumptions together imply that $K_{I(r)}\ge r \ge0$, for every $I(r)$ with $1\le r\le n-1$. 
Therefore, also in this case, all hyperplanes spanned by sets of $n$ points  are contained in the base locus of $D$ 
with (exact) multiplicity  $k_{I(n-1)}$.

In all above cases, the strict transform $D_{(n-1)}$ of $D$ is
$$
dH
-\sum_{I(\rho)\in\II(n-2)}k_{I(\rho)}E_{I(\rho)} 
-\sum_{\substack{I(\rho)\in\JJ^>\\0\le\rho\le n-2}} k_{I(\rho)}E_{I(\rho)} 
-\sum_{i=1}^{n+1}k_{I(n-1)}E_{I(n-1)},
$$
where $E_{I(n-1)}$ is the strict transform in $X^n_{(n-1)}$ of the hyperplane of $\PP^n$
passing through the $n$ points parametrised by $I(n-1)$, see 
Remark \ref{permutohedron}. 
One can verify that the coefficients of the hyperplane class $H$ and the coefficient of the exceptional divisors $E_{I(\rho)}$ in the  
expression for $D_{(n-1)}$ are respectively 
\begin{align*}
d-\sum_{i}(b-m_i+d)=&-nb,\\
k_{I(\rho)}-\sum_{ i\notin I(\rho)}(b-m_i+d)=&-(n-\rho-1)b.
\end{align*}
This concludes the proof.
\end{proof}

\begin{remark}
Proposition \ref{strict=cremona} provides another proof that the effective cone of $X^n_{(0)}$, the 
space blown-up at $s\leq n+1$ points, is described  by the inequalities \eqref{toric effective cone}.
\end{remark}

\subsection{Vanishing theorems for non-effective divisors with $s=n+2$ points or more}

We recall here that in the case $s=n+2$, the space  $X^n_{(n-2)}$ was identified by Kapranov \cite{Ka}
with the moduli space $\overline{\mathcal{M}}_{0,n+3}$ of stable rational curves with $n+3$ marked points.
There are chambers of divisors with $s=n+2$ 
for which the strict transform $\tilde{D}$ in $X^n_{(n-1)}$ has vanishing theorems. 

\subsubsection{Chambers of non-effective divisors on  $\overline{\mathcal{M}}_{0,n+3}$ with vanishing theorems}

We recall that the effective cone $\Eff(X)$ of $X^n_{(0)}$, with $s=n+2$, is formed by  the divisors  of the form \eqref{general divisor} that satisfy the following inequalities:
\begin{equation}\label{n+2 effective cone}
d\ge0, \quad b\le0, \quad m_i\le d, \quad b\le m_i,  \quad \forall i\in\{1,\dots,n+2\},
\end{equation}
where  $b:=b(D)=\sum_{i=1}^{n+2}m_i-nd$, as defined in \eqref{b}.
See e.g. \cite[Lemma 2.2]{BraDumPos}.
Moreover, for any effective divisor $D$ satisfying \eqref{n+2 effective cone}, the strict transform $\tilde{D}$ has
 vanishing cohomologies, see Theorem \ref{monster for effective}.

In this section  we extend the vanishing theorems to  
divisors sitting 
in a ``small'' region outside the effective cone of $X:=X^n_{(0)}$, the blow-up  of $\PP^n$ at $ n+2$ points in general position.

For a divisor $D$ of the form \eqref{general divisor}, 
we consider the following chambers of  $\N^1(X)\setminus \Eff(X)$:
\begin{subequations}\label{bounds for multiplicities 2}
\begin{align}
s=n+2:& \quad b=1, \quad m_j\le d+1  \label{n+2 b=1};\\
s= n+2:& \quad  b\le 0, \quad m_1=d+1,\ m_j\le d+1. \label{n+2 b neg}
\end{align}
\end{subequations}

 \begin{remark}\label{(I)(II)(III) verified for n+2 chambers}
 In the above notation, if $D$ satisfies \eqref{bounds for multiplicities 2}, then the set $\JJ^>(D)$ satisfies conditions (I),(II) and (III) of Section \ref{blow-up construction}.
 \end{remark}

\begin{theorem}\label{monster for empty}
Statements $(a)$ and $(b)$ of Theorem \ref{monster for effective}  hold for non-effective divisors with $s= n+2$ points
 satisfying the inequalities 
\eqref{bounds for multiplicities 2}.
\end{theorem}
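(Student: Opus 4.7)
The plan is to follow the same template used in Sections \ref{vanishing section n+3} and \ref{vanishing empty section}, namely double induction on $n$ and on $b=\sum m_i-nd$, via the restriction sequence to the exceptional divisor $E_1$. The base cases in low dimension ($n=1,2$) or with minimal $b$ (all $m_i=0$) can be checked directly by Serre duality on $X^n_{(n-1)}$. For the inductive step, set $D'=D+E_1$ and, for each $0\le r\le n-1$, consider
\begin{equation*}
 0\longrightarrow D'_{(r)}-E_1\longrightarrow D'_{(r)}\longrightarrow {D'_{(r)}}|_{E_1}\longrightarrow 0.
\end{equation*}
I would then prove in turn the three analogues of Propositions \ref{h^0 exact}, \ref{simple obstructions}, \ref{sequence B}: an injection on $H^0$ which is in fact an isomorphism (giving $h^0(D')=h^0({D'_{(1)}}|_{E_1})$, hence a bijection on $H^0$ for the kernel $D'_{(r)}-E_1$); a cohomology computation of $D'_{(r)}-E_1$ showing vanishing outside degree $r+1$ and matching $l(D,r)$ in degree $r+1$; and finally the identity $h^i(D_{(r)})=h^i(D'_{(r)}-E_1)$ obtained by restricting successively to the exceptional divisors $E_{I}$ for $I\in\mathcal{I}(r)_1^{>}$, where the restriction produces Cremona-type divisors on the first factor whose vanishing is guaranteed by Theorem \ref{vanishing cremona} and Proposition \ref{vanishing almost cremona}.

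The crucial numerical input, playing the role of Lemma \ref{b in seq A}, is that both reductions preserve the region \eqref{bounds for multiplicities 2}. Passing from $D$ to $D'$ lowers $b$ by one and keeps $s$ fixed: if we are in case \eqref{n+2 b=1} with $b=1$, then $D'$ has $b=0$ and $m_1\le d+1$ (with the new $m_1$ decreased by one), landing in case \eqref{n+2 b neg} or in the effective regime handled by Theorem \ref{vanishing for >=n+3}; if we are in case \eqref{n+2 b neg}, $D'$ stays in \eqref{n+2 b neg} with $m_1=d+1$ preserved. Passing from $D'$ to the restricted divisor ${D'_{(1)}}|_{E_1}$ in $X^{n-1}_{(0)}$ drops both $n$ and $s$ by one, so the restricted system still has $s=n+2$ points in the new dimension $n-1$, and one checks as in Lemma \ref{b in seq A} that its $b$-invariant equals $b(D')=b-1$, while the multiplicities $k'_{1j}=\max(m_1+m_j-d-1,0)$ remain bounded by $(m_1-1)+1$. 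Some of the $k'_{1j}$ may vanish, in which case one must add fictitious zero-multiplicity points and invoke the support-divisor trick of Remark \ref{zero points}, landing on the toric case of Theorem \ref{toric theorem} or on a lower-$b$ instance of the present theorem.

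The main obstacle will be the bookkeeping of these numerical conditions on the boundary of \eqref{bounds for multiplicities 2}, since this region is tight (cfr. Remark \ref{why bound n+2}): one has to check that whenever a restriction produces a divisor that no longer satisfies the input hypothesis, it does satisfy the hypothesis of one of Theorems \ref{monster for effective}, \ref{vanishing for >=n+3}, or \ref{toric theorem}. A secondary difficulty is that with $s=n+2$ the index set $\mathcal{I}$ need not satisfy condition (III) from Section \ref{blow-up construction} in full generality, so the restriction procedure on the exceptional divisors $E_{I(\rho)_j}$ in the analogue of Proposition \ref{sequence B} has to be carried out using twisted Cremona divisors $\Cr_\rho(h)(\check{\mathcal{I}})$ as in \eqref{almost cremona}; their cohomology vanishes by Proposition \ref{vanishing almost cremona}, but one has to verify that the restrictions land precisely among such divisors, exploiting that $m_i\le d+1$ implies $k_{I(\rho)_j\setminus\{l\}}\ge 0$ whenever $k_{I(\rho)_j}\ge 1$. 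Once these verifications are in place, the three-step template of Section \ref{vanishing section n+3} runs mechanically and yields Theorem \ref{monster for effective} for the divisors in the asserted region.
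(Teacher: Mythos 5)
Your proposal transplants the $E_1$-restriction template of Sections \ref{vanishing section n+3} and \ref{vanishing empty section}, but that template breaks down exactly where the difficulty of the $s=n+2$ case lies: the codimension-one cycles. Already in the toric case the paper runs this template only for $1\le r\le n-2$ (Corollary \ref{r less n-2}) and must treat $r=n-1$ by the separate identity $\tilde{D}=-b\Cr_n(H)$ of Proposition \ref{strict=cremona}; no such identity exists for $s=n+2$. The concrete gap is your assertion that in the analogue of Proposition \ref{sequence B} ``the restriction produces Cremona-type divisors on the first factor whose vanishing is guaranteed by Theorem \ref{vanishing cremona} and Proposition \ref{vanishing almost cremona}.'' That is true for $E_{I(\rho)}$ with $\rho\le n-2$, whose first factor sees only the $\rho+1$ points of $I(\rho)$, but it is false for a hyperplane $E_{I(n-1)}$ spanned by $n$ of the $n+2$ points: its strict transform also contains the trace point $q=L_{I(n-1)^c}\cap L_{I(n-1)}$ of the line through the two remaining points, so the restricted divisor is a divisor on a blown-up $\PP^{n-1}$ interpolating $n+1=(n-1)+2$ points including $q$ --- an instance of the theorem itself in dimension $n-1$, not a multiple of a Cremona class. (Your numerics also slip here: for $s=n+2$ the $b$-invariant of ${D'_{(1)}}{|_{E_1}}$ is $b+m_1-d-2$ when all $k'_{1i}>0$, which equals $b-1$ only if $m_1=d+1$.)

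The paper's proof is organized entirely around this obstacle. After reducing, via Theorem \ref{higher cohomologies} and Corollary \ref{number of sections}, to the single statement $h^i(\tilde{D})=0$, it inducts on dimension through Castelnuovo-type restrictions to the hyperplane $H_{1,\dots,n}$ --- whose trace is precisely the $(n+1)$-point divisor $\tilde{F}$ in $\PP^{n-1}$ described above, handled by the inductive hypothesis with base case Proposition \ref{n=2} --- and on degree and multiplicities for the kernel, interleaving these restrictions with the removal of the cycles $E_{\ast,n+1,n+2}$ that enter the base locus after each hyperplane subtraction. The substantive content is the case analysis (1), (2.a), (2.b.a), (2.b.b) deciding how many copies of $H_{1,\dots,n}$ can be subtracted before coefficients go negative, and the verification that the hyperplanes $H_{L,n+1,n+2}$ contribute no binomial terms because $K'_{L,n+1,n+2}\le n-1$; this is where the tightness of \eqref{bounds for multiplicities 2} (Remark \ref{why bound n+2}) actually enters. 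None of this appears in your outline, so the proposal does not yet constitute a proof.
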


We will also generalise Theorem \ref{vanishing for >=n+3} 
to non-effective divisors  with an arbitrary number of points in a small region around the effective cone, namely for $m_i\leq d+1$.
Recall the bound \eqref{EffectivityCondition} of Theorem \ref{theorem a n+3} that was proved in \cite{BraDumPos} to be, 
together with $m_i\le d$, sufficient condition
for a divisor with $s\ge n+3$ to be effective. 

\begin{theorem}\label{empty arbitrary points} 
Statements $(a)$ and $(b)$ of  Theorem \ref{monster for effective}  hold for non-effective divisors with $s\ge n+3$ 
points such that $m_i\leq d+1$ and $b\le \min(n-s(d),s-n-2)$. 
\end{theorem}

\subsection{Euler characteristic and linear virtual dimension}

In this section we compute the Euler characteristic of the strict transforms
 of non-effective divisors $D$ and compare them with their linear expected dimension.
 
\begin{corollary}\label{binomial equalities}
Let $D$ be a divisor with $m_i\le d+1$. The following holds.
\begin{enumerate}
\item
If $D$ is toric and effective, then 
$$\chi(\tilde{D})=\ldim(D)\ge1.$$
In particular if $b=0$, then 
$$\chi(\tilde{D})=\ldim(D)=1.$$
\item If $D$ is toric and non-effective, then 
$$\ldim(D)=0, \quad \chi(\tilde{D})=(-1)^nh^n(\tilde{D})=(-1)^n{{b-1}\choose n}.$$
In particular if $b\le n$ then 
$$ \chi(\tilde{D})=h^n(\tilde{D})=\ldim(D)=0.$$
\item If $D$ is non-effective  
such that  $s\le n+1$ and $b\le n$, 
or $s=n+2$ and $b\le 1$,
or $s\ge n+3$ and $b\le \min(n-s(d),s-n-2)$, then
$$\chi(\tilde{D})=\ldim(D)=0.$$
\end{enumerate}
\end{corollary}

\begin{proof} 
Part (1)  follows from Proposition \ref{strict=cremona} and {\cite[Theorem 4.6]{BraDumPos}}. Moreover, 
part (2) and (3) follow from Theorem \ref{toric theorem} if $s\le n+1$, 
Theorem \ref{monster for empty} if $s=n+2$ and Theorem \ref{empty arbitrary points} for $s\ge n+3$, in fact $h^0(D)=\ldim(D)=0$.
\end{proof}

In particular all non-effective divisors satisfying the conditions of Corollary \ref{binomial equalities}(3) are only linearly obstructed and their strict transform $\tilde{D}$ is not linearly obstructed.

We conclude the section by noticing that
if one moves further away from the effective cone, by choosing for instance
$m_1=d+2$ or higher, then the strict transform after blowing-up  all linear subspaces contained
in the base locus may still be linearly obstructed. 
We illustrate some instances where this 
happens.

\begin{example}
Let $D=3H-5E_1-mE_2$ in $X^3_{(0)}$ be a divisor  with $h^0(D)=0$. 
\begin{itemize}
\item
If $m=3$, we have $\chi(D_{(1)})=-5$. Hence $h^1(D_{(1)})\ge5$.
\item
If $m=4$, we have $\chi(D_{(1)})=0$.  Is  $h^1(D_{(1)})=h^2(D_{(1)})=0$?
\item
If $m=5$, we have $\chi(D_{(1)})=6$. Hence $h^2(D_{(1)})\ge6$.
\end{itemize}
\end{example}

\section{Proofs of the result of Section \ref{section non-effective statements}}\label{section non-effective proofs}

In this section we prove all results stated in Section \ref{section non-effective statements}.
More precisely this section is organised as follows. 
We prove Theorem \ref{toric theorem}, Theorem \ref{monster for empty} and Theorem \ref{empty arbitrary points} 
for the planar case in Section \ref{section planar case}. This case is the base step for an induction argument on $n$. 
In Section \ref{section induction procedure} we will construct the induction procedure, that will follow the line of Section \ref{Vanishing theorems section}. 
We will complete the proof of Theorem  \ref{toric theorem}, Theorem \ref{higher cohomologies}, Theorem \ref{monster for empty} and Theorem \ref{empty arbitrary points}  for dimension $n\ge3$ in Sections \ref{Section toric case}, \ref{section proof higher cohomologies},
\ref{section case n+2} and \ref{section case n+3} respectively. 

\subsection{The planar case}\label{section planar case}

\begin{proposition}\label{planar vanishings}
Let $D$ be a non-effective divisor on $X_{(0)}^2$  with $m_i\le d+1$ such that $b\le 2$ if $s\le3$, $b\le 1$ for $s=4$ and $b\le \min(n-s(d),s-4)$ for $s\ge 5$. 
Then $h^i(\tilde{D})=0$, for $i=0,1,2$.
\end{proposition}

\begin{proof}
If $s=1$ the statement is trivial. Therefore we will assume that  $s\ge2$. Without loss of generality, we will also assume $m_1\ge\cdots\ge m_s\ge1$. 

Notice that in the case $s\le3$ (resp. $s=4$), the assumption that $D$ is not effective implies that \eqref{bounds for multiplicities 1} (resp \eqref{bounds for multiplicities 2}) is satisfied. Moreover if $s\ge 5$, the same assumption implies that $m_1=d+1$.

Assume $s\le3$ and $b=2$. In this case the strict transform of $D$ is $\tilde{D}=-2\Cr_2(H)$ and $h^i(\tilde{D})=0$ for $i=0,1,2$ by Theorem \ref{vanishing cremona}.

\medskip 

We will assume from now on that $b\le1$ for $s\le4$ and $b\le s-4$ for $s\ge5$.

\medskip
Case (1) $m_1=d+1$. We have  $k_{1i}=1+m_{i}\geq 1$, for $2\leq i\leq s.$
The condition $b=(d+1)+m_2+\cdots+m_s-2d\leq 1$  implies that $k_{ij}\leq 0$, if $1\notin\{i, j\}.$
We can write
$$\tilde{D}=D_{(1)}:=D-\sum_{i=2}^{s}k_{1i}(H-E_{1}-E_{i})=(-b-s+2)H-(-b-s+3)E_{1}+\sum_{i=2}^{s}E_{i}.$$
By Serre duality and the fact that $b\le 1$, we obtain 
$$h^2(\tilde{D})=h^0((b+s-5)H-(b+s-4)E_{1})=0.$$ 
Moreover, using $\chi(\tilde{D})=0$ and $h^0(\tilde{D})=0$, we  conclude that also $h^1(\tilde{D})=0$.

\medskip 

Case (2) $m_1\le d$.  In this case $s\le 4$ and the non-effectivity assumption implies that $b=1$.

If $s=3$, then $m_1+m_2+m_3-2d=1$ so all three lines are in the base locus of $D$. 
We leave to the reader to check  that $\tilde{D}=-2H+E_1+E_2+E_3$. 
The vanishing of $h^i(\tilde{D})$, for $i=0,1,2$, follows from Theorem \ref{vanishing cremona}.

Assume $s=4$ and $m_{1}=d$. We have $k_{1i}=m_{i}$ and we conclude $h^{1}(\tilde{D})=h^{2}(\tilde{D})=0,$ since the strict transform is
$$\tilde{D}=-H+E_{1}.$$

Case $s=4$ and $m_{1}<d.$ We must have $k_{12}=m_1+m_2-d>0$ (otherwise 
$K_{34}\leq K_{12}\leq 0$ would be in contradiction with $b\ge1$). From $K_{ij}+K_{pq}=1$, we obtain that $k_{34}=0$ and that
the lines $L_{ij}$ and $L_{pq}$ can not be  
both contained in the base locus of $D$, therefore either $k_{23}=1$ or $k_{14}=1$. 
Now, observe that by assumption we have $k_{13}\geq k_{23}$ and $k_{13}\geq k_{14}$. We have the following two cases.

 Case (i). The lines $L_{12}, L_{14}, L_{13}$ are in the base locus and
$$\tilde{D}=[4d-(3m_1+m_2+m_3+m_4)]H-[3d-(2m_1+m_2+m_3+m_4)]E_1-\sum_{i=2}^4(d-m_1)E_i.$$

Case (ii).  The lines $L_{12}, L_{23}, L_{13}$ are in the base locus  and
$$\tilde{D}=2(m_4-1)H-\sum_{i=1}^3 (m_4-1)E_i-m_4E_4.$$

We claim $h^i(\tilde{D})=0, i\geq 0$. Indeed, in both cases $\tilde{D}$ has the property that
$k_{12}(\tilde{D})=0$ and $k_{34}(\tilde{D})=1$, therefore $h^i(\tilde{D})=h^i(\tilde{D}-L_{34})$. 
Furthermore, the divisor $\tilde{D}-L_{34}$ has now $k_{12}=1$ and $k_{34}=0$, 
therefore $h^i(\tilde{D}-L_{34})=h^i(\tilde{D}-L_{34}-L_{12})$. We continue to eliminate simple obstructions until the residue becomes a toric divisor with positive coefficients of the form $H-E_1-E_2-E_3$ that has all vanishing theorems.
\end{proof}

\begin{proposition}\label{n=2} 
Statements $(a)$ and $(b)$ of Theorem \ref{monster for effective}  hold for non-effective divisors in $\PP^{2}$ 
  satisfying the inequalities \eqref{bounds for multiplicities 1} if $s\le 3$,  \eqref{bounds for multiplicities 2} if $s=4$ and $b\le \min(n-s(d),s-4)$ if $s\ge 5$. 
\end{proposition}

\begin{proof} For  non-effective divisors in $\PP^{2}$, we reduce the
 proof to the vanishing theorems for $\tilde{D}$. Indeed, for any line $L_{ij}$ through two points $p_i$ and $p_j$ with corresponding $k_{ij}:=-D\cdot (H-E_i-E_j)\geq 1$, 
we have $D|_{L_{ij}}=\mathcal{O}_{\PP^{1}}(-k_{ij})$.

For $\tilde{D}=D-\sum_{1\leq i\leq j\leq 4}k_{ij}(H-E_i-E_j)$, the Riemann-Roch Theorem implies
\begin{equation}\label{rr}
\chi(\tilde{D})=\sum_{k_{i,j}\geq 1}{{k_{ij}}\choose{2}}+\chi(D).
\end{equation}

Furthermore, since $D$ has positive coefficients it follows that $h^2(D)=0$ while $h^0(D)=h^0(\tilde{D})$.
The formula \eqref{rr} implies that
\begin{align*}
h^1(D)&=\sum_{k_{ij}\geq 1}{{k_{ij}}\choose{2}} + h^1(\tilde{D}) - h^2(\tilde{D}),\\
\chi(D)&=h^0(D)-h^1(D)={d+2\choose{2}}-\sum_{i=1}^4{m_i+1\choose{2}}.
\end{align*} 
Finally, the above equalities imply
$$h^{0}(D)=\ldim(D)+h^1(\tilde{D})-h^2(\tilde{D}).$$
We conclude using Proposition \ref{planar vanishings}.

\end{proof}

\begin{corollary}
Theorem \ref{toric theorem}, Theorem \ref{monster for empty} and Theorem \ref{empty arbitrary points} 
hold for $n=2$.
\end{corollary}

\subsection{Case $n\ge3$: an induction procedure}\label{section induction procedure}
We will prove the statements of Section \ref{section non-effective statements} by induction on $n$ and on $b$.

As in Section \ref{Induction on b}, let us introduce the following divisor
$$
D':=D+E_1.
$$
We will consider $D'$ and its strict transforms $D'_{(r)}$ as living in 
$X^n_{(r)}$, the space blown-up along the base linear cycles of $D$, parametrised by the set
$\JJ^>(D)$, cfr.  Remark \ref{D' lives in spaces of D}. 
\begin{remark}
Since $m_i\le d+1$, the divisor ${D'_{(1)}}{|_{E_1}}$ in $E_1\cong X^{n-1}_{(0)}$ (introduced in \eqref{D'r}) has 
 $k'_{1i}:=k_{1i}(D')=\max(K_{1i}-1,0)\le m_1$, for all $2\le i\le s$, namely the multiplicities 
of the points $e_{{1i|1}}$
  do not exceed the multiplicity $m_1-1$ by more than one. 
\end{remark}

\begin{lemma}\label{b in seq A} In the above notation,
assume $D$ satisfies \eqref{bounds for multiplicities 1}
if $s=n+1$, or $m_1=d+1$ if $s\ge n+2$. 
Then $b({D'_{(1)}}{|_{E_1}})=b(D')=b-1$. 
\end{lemma}

\begin{proof}
Assume first that $m_1=d+1$. Notice that $k'_{1i}=m_1+m_i-d-1=m_i\ge1$, 
for all $2\le i \le s$. Therefore,
the restricted divisor ${D'_{(1)}}{|_{E_1}}$ is of the form
$$dh-\sum_{i=2}^{s}m_ie_{1i|1},$$ 
and one computes 
$$b(D'_{(1)}{|_{E_1}})=\sum_{i=2}^{s}m_i-(n-1)d=\sum_{i=1}^{s}m_i-nd-m_1+d=b-1.$$

Assume now that $s\le n+1$ and
that $m_i\le d$, for all $1\le i\le s$. In this case $b\ge1$.
Fix an index $i\ge 2$ and write 
$$b-1=\sum_{j=2,\ j\ne i}^{s}m_j-(n-1)d+(m_1+m_i-d-1).$$ 
As $b\ge1$, $k'_{1i}=m_1+m_i-d-1\ge 0$. Therefore, 
the divisor $D'_{(1)}{|_{E_1}}$ is of the form 
$$(m_1-1)h-\sum_{i=2}^{s}k'_{1i}e_{1i|1},$$ with possibly some of the $k'_{1i}$
being zero. One computes 
$$b(D'|_{E_1})=\sum_{i=2}^{s}(m_1+m_i-d-1)-(n-1)(m_1-1)=b-1.$$
\end{proof}

In the conditions of Lemma \ref{b in seq A},
it is clear that the map
$$
H^0(D'_{(r)})\to H^0 ({D'_{(r)}}{|_{E_1}})
$$
 between global sections is injective, as the kernel $D'_{(r)}-E_1$ has no non-zero sections. 
We prove in the next proposition  that 
it is in fact an isomorphism.

\begin{proposition}\label{h^0 in sequence A}
Assume $D$ satisfies the same assumptions as Lemma \ref{b in seq A}.
Then  $h^0(D'_{(1)})=h^0({D'_{(1)}}{|_{E_1}})$.
\end{proposition}

\begin{proof} 
Notice that $h^0(D')=h^0(D'_{(1)})$.
We first consider the case \eqref{n+1 b pos} with $m_i\le d$.
Notice that ${D'_{(1)}}{|_{E_1}}$ has base points with multiplicity bounded by the degree,
 that is $k'_{1i}\le m_1-1$, for all $2\le i\le s$. 
If $2\le b\le n$, then, using Lemma \ref{b in seq A}, 
we get that $b(D')=b({D'_{(1)}}{|_{E_1}})\ge1$, therefore both divisors $D'$ and 
${D'_{(1)}}{|_{E_1}}$ have no non-zero global sections. 
Indeed, they both lie outside the effective cones of the respective spaces,   described in \eqref{toric effective cone}.
If $b=1$, then, using Lemma \ref{b in seq A}, 
we get that $b(D')=b({D'_{(1)}}{|_{E_1}})=0$, so the two divisors are effective. 
Using Proposition \ref{strict=cremona},
 we obtain that they both have only one non-zero global section, namely $h^0=1$.

We now consider the case in which $m_1=d+1$.
Notice that $k'_{1i}=m_i$, for $2\le i\le s$. 
Assume first that  $m_2=d+1$. 
Then $D'$ has a point, $E_2$, with multiplicity larger than the degree, that is $m_2=d+1$; moreover
${D'_{(1)}}{|_{E_1}}$ has a point, $e_{12|1}$, with multiplicity larger than the degree,
 that is  $k'_{12}=d+1$. Therefore, none of them has non-trivial global sections.
If, instead, $m_i\le d$, for all $2\le i\le s$, then both $D'$ and ${D'_{(1)}}{|_{E_1}}$ 
have points of multiplicity bounded by the degree. By the trivial observation that
 $D'$ has a point of multiplicity $d$ and $s-1$ points of multiplicity respectively 
$m_2,\dots, m_s$ and ${D'_{(1)}}{|_{E_1}}$ has $s-1$ points of multiplicity 
respectively $m_2,\dots, m_s$, we conclude that their spaces of global section
have the same dimension, see for instance 
\cite[Lemma 5.1]{BraDumPos}.
\end{proof}

\begin{proposition}\label{vanishing r<n+2 empty}
Assume $D$ satisfies the same assumptions as Lemma \ref{b in seq A} and let $1\le r\le n-1$.

Assume that statement $(a)$ of Theorem \ref{monster for effective}  holds for 
$D'$ and ${D'_{(1)}}{|_{E_1}}$ and that  statement $(b)$ of Theorem  \ref{monster for effective}  holds for the pairs $(D',r)$  and  
$({D'_{(1)}}{|_{E_1}},r-1)$.
Then statements $(a)$ and $(b)$ hold for $(D,r)$.
\end{proposition}

\begin{proof}
We prove the statement in two steps. 
Following the idea of the proofs of Proposition \ref{simple obstructions} and Proposition \ref{sequence B}, we first show that the statement holds for $D'_{(r)}-E_1$ and then that $h^i(D_{(r)})=h^i(D'_{(r)}-E_1)$, for all $i\ge 0$. 

To prove the first part, we consider the sequences in cohomology associated with the short exact sequence  \eqref{seq Ar}, 
as in the proof of Proposition \ref{simple obstructions}. Since $h^0(D'_{(r)}-E_1)=0$,  using \eqref{seq h0}, 
Proposition \ref{h^0 in sequence A} and the assumption 
$h^1(D'_{(r)})=0$ implies  that $h^1({D'_{(r)}}-E_1)=0$.
Moreover, we can compute the other cohomologies of $D'_{(r)}-E_1$ exploiting those of $D'_{(r)}$ 
and ${D'_{(r)}}{|_{E_1}}$ using the long exact sequence in cohomology associated.
In fact if $i\ne 0,r+1$, then $h^i({D'_{(r)}}-E_1)=0$ because $h^{i-1}({D'_{(r)}}{|_{E_1}})=0$ and $h^i(D'_{(r)})=0$ by
induction on $n$ and $b$, respectively. 
If $i=r+1$ we conclude using \eqref{seq hr}.

To prove the second part, we argue as in the proof of Proposition \ref{sequence B}.
If $r=0$ the statement is obvious. 
Fix $1 \le \rho\le r$. 
Similarly to Section   \ref{Induction on b} 
we introduce the sets
$$
\JJ(\rho)^>:=\JJ^>\cap \II(r)_1,
$$
the set of multi-indices that parametrise cycles through $p_1$
 that are in the base locus of $D$.

In the space $X^n_{(r)}$, for every pair $(\rho,j)$, $1\le\rho\le r$ and $0\le j\le s_\rho$, for which $I(\rho)_j\in \mathcal{J}(\rho)_1^{>}$, we consider the divisor $F(\rho,j)$ defined in \eqref{F}, using sequences of type \eqref{seq B}.
We claim that for any pair $(\rho,j)$ such that $I(\rho)_j\in \mathcal{J}(\rho)_1^{>}$,
 the divisor $F(\rho,j)$ restricts, on the first factor of $E_{I(\rho)_j}$  which is isomorphic to $X^{\rho}_{(\rho-1)}$,
to  $-\Cr_\rho(h).$
The latter has vanishing cohomology groups, by Theorem \ref{vanishing cremona}.
Then, by means of the Kunneth formula, we can conclude that the restriction itself has vanishing cohomologies. 
\end{proof}

\subsection{The toric case}\label{Section toric case}

In this section we complete the proof of Theorem \ref{toric theorem}, for $n\ge3$. The case $n=2$ was 
covered in Section \ref{section planar case}.

\begin{proof}[Proof of Theorem \ref{toric theorem}] In this proof $D$ will be a toric divisor. 
If $D$ is effective, i.e. when $b\leq 0$ and $m_i\leq d$ (cfr. \eqref{toric effective cone}, the vanishing theorems for the higher cohomology groups of the strict transforms of $D$ in $X_{(r)}$ were established in Theorem \ref{monster for effective}.

Assume $\bar{r}= n-1$ and consider the following two independent cases:
\begin{enumerate}
\item $b\geq n$ or $b\ge 1$ and  $m_i\leq d$ for all $1\leq i\leq n+1$. 
\item $b \leq n-1$ and $m_1=d+1$. 
\end{enumerate}

Case (1). By Proposition \ref{strict=cremona} part (2), we conclude that
$\tilde{D}=-b\Cr(H).$
In Theorem \ref{vanishing cremona} we proved that for all $i\neq n$ then $h^i(\tilde{D})=h^i(\mathcal{O}(-b\Cr_n(H)))=0$, while
$$h^n(\tilde{D})=h^n(\mathcal{O}(-b\Cr_n(H)))=h^n(\PP^n, \mathcal{O}(-b))={{b-(n+1)+n}\choose{n}}.$$
In particular, if $b=n$, we obtain $h^i(\tilde{D})=0, \forall i\geq 0.$

Case (2). We assume that $1\le b\le n-1$. In this case, following the notation of Section
 \ref{section induction procedure}, we have, by Lemma \ref{b in seq A}
that  $b({D'_{(1)}}{|_{E_1}})=b(D')=b-1$.  
Therefore by Proposition \ref{strict=cremona}, we have 
that the two divisors have  strict transforms equal to $-(b-1)\Cr_{n-1}(h)\in 
\Pic(E_1)\cong\Pic(X^{n-1}_{(n-3)})$ and 
$-(b-1)\Cr_{n}(H)\in \Pic(X^{n}_{(n-2)})$, hence both have vanishing cohomologies.
By means of the long exact sequence in cohomology associated to 
the sequence of type \eqref{seq A}, used in \eqref{seq hr} with $r=n-1$, 
we obtain $H^i(\tilde{D})=0$, for every $i\ge0$.
Using the formulas \eqref{from tildeD to Dr}, we can deduce the cohomologies of
the strict transforms $D_{(r)}$, for $1\le r\le n-2$.

\medskip

Assume $\bar{r}\le n-2$. 
The proof is by induction on $b$  and $n$. 
If $n=2$ the statement is covered in Section \ref{section planar case}. The case  $b=-nd$, i.e. if $m_i=0$ for all 
$i=1,\dots,s$ is trivial.

For the pair $(n,b)$, with $n\ge 3$, and $m_1\ge 1$, we will 
assume the statement to be true for $(n-1,b)$ 
and $(n,b-1)$. We can conclude using Proposition \ref{vanishing r<n+2 empty}
that provides the induction step, that $h^i(D_{(\bar{r})})=0$, for all $i\ge0$. 
Using the formulas \eqref{from tildeD to Dr}, we can deduce the cohomologies of
the strict transforms $D_{(r)}$, for $1\le r\le \bar{r}-1$.

\end{proof}

\subsection{Proof of Theorem  \ref{higher cohomologies}}\label{section proof higher cohomologies}

\begin{proof}[Proof of Theorem \ref{higher cohomologies}]
Let $D$ be a divisor of the form \eqref{general divisor}.
Assume that $k_{I(r)}=k_{I(r)}(D)\geq 1$ for some linear cycle of dimension $r$, $L_{I(r)}$, spanned by a subset $I(r)$ on $r+1$ points.

Consider the toric divisor \begin{equation}\label{g}
G=dh-\sum_{i\in I(r)}m_ie_i
\end{equation}
on $X^{r}_{(r-2)}$. Notice that $k_{I(r)}=b(G)$ and $m_i\leq d+1$.

We have that the restriction of $D_{(r-1)}$ to the exceptional divisor is of the form
$$D_{(r-1)}|_{E_{I(r)}}=\tilde{G}\boxtimes 0,$$
where $\tilde{G}$ represents the strict transform of the divisor $G$ defined in \eqref{g}, cfr.  \eqref{intersection table}.

By applying 
Theorem \ref{toric theorem}
 to the toric divisor $G$ in $X^r_{(0)}$  with $m_{i}\leq d+1$ and $b(G)=k_{I(r)}$, we obtain
\begin{equation}\label{bdp2}
h^i(E_{I(r)}, D_{(r-1)}|_{E_{I(r)}})=h^i(X^r_{(r-2)},\tilde{G})=h^i(X^r_{(0)},-k_{I(r)}h).
\end{equation}

Furthermore, since conditions (I),(II) and (III) of Section \ref{blow-up construction} are satisfied and using \eqref{bdp2}, the result contained in  \cite[Proposition 4.10]{BraDumPos} implies that the following formulae also hold
\begin{equation}\label{bdp}
h^{r+1}(D_{(r)})=h^{r+1}(D_{(r+1)})- h^{r+2}(D_{(r+1)})+
\sum_{k_{I(r)\geq 1}}{{n+k_{I(r)}-(r+1)-1}\choose{n}},
\end{equation}
while, for all $i\le r+1$, we have
\begin{equation}\label{bdp1}
h^{i}(D_{(r+1)})=h^{i}(D_{(r+2)}).
\end{equation}

We recall that, if $r=n-1$, i.e. $I$ is a subset with $|I|=n$, then \eqref{bdp} and \eqref{bdp1}  hold for the divisor $E_{I}=\tilde{H}_{I}$, 
a blown-up hyperplane along the cycles spanned by the subsets of $I\in \JJ^{>}$ 
in $X_{(n-3)}^{n-1}$. Notice that for each multi-index $I\in \JJ^{>}$ of cardinality $n$ and $H_{I}$ hyperplane spanned by 
the points parametrised by $I$, with $k_{I}\geq 1$, then on $X_{(n-2)}^{n}$ we have the following divisor:
$$\tilde{D}=D_{(n-2)}-\sum_{I=I(n-1)\in \JJ^{>}}k_{I}H_{I}.$$

We iterate the formula \eqref{bdp} 
for $h^{\rho+1}(D_{(\rho)})$ with $\rho\ge r+1$. More explicitly, \eqref{bdp1} allows us to substitute $h^{\rho+1}(D_{(\rho+1)})=h^{\rho+1}(D_{(\rho+2)})=\cdots=h^{\rho+1}(\tilde{D})$ and apply again equation
\eqref{bdp} for $h^{\rho+2}(D_{(\rho+1)})$. By iteration of \eqref{bdp} for $\rho\ge r+1$ we obtain that
 $h^{r+1}(D_{(r)})$ equals the sum of $\sum_{\rho=r+1}^n (-1)^{\rho-r-1}h^{\rho}(\tilde{D})$ and $l(D, r+1)$, see \eqref{linear obstruction}. This concludes the proof of part (1).

To prove part (2), observe that $D_{(r)}$ is obtained from $D_{(r-1)}$ by subtracting 
 $k_{I(r)}$ times the exceptional divisors $E_{I(r)}$, for all $I(r)$ on the space $X^{n}_{(r)}$. 
By the above argument we obtain that the restricted divisor, for $l=0,\dots,k_{I(r)}-1$, satisfies 
$$h^i(E_{I(r)}, D_{(r-1)}-(k_{I(r)}-l)E_{I(r)}|_{E_{I(r)}})=h^i(\PP^{r},-lh).$$
The right-hand side cohomology group is zero, for all $0\le l\le \min(r,k_{I(r)})$.
\end{proof}

\subsection{The case $s= n+2$}\label{section case n+2}

\begin{remark}
We say that a divisor of the form \eqref{proper transform 1} has \emph{non-negative coefficients} if and only if the degree $d$ and the multiplicities $m_i$ and $k_{I(r)}$ are non-negative.
We notice that for any effective divisor $D$, the strict transform $\tilde{D}$  has non-negative coefficients.
On the other hand, non-effective divisors  behave differently. More precisely, if $D$ is non-effective, the strict transform $\tilde{D}$ may have both non-negative and positive coefficients. 
\end{remark}

\begin{proof}[Proof of  Theorem \ref{monster for empty}]

As the case $n=2$ was worked out in Section \ref{section planar case}.
For $n\ge 3$ by Remark \ref{(I)(II)(III) verified for toric chambers} and  Theorem  \ref{higher cohomologies} we can reduce the proof of Theorem \ref{monster for empty} for a divisor $D$ to the proof of vanishing theorems for its strict transform $\tilde{D}$.

We will prove the vanishing theorems for $\tilde{D}$ by induction on $n$, 
based on the planar case.

Without loss of generality we can assume $m_1\ge \cdots\ge m_{n+2}\ge1$.

We split the proof in two parts corresponding to the two cases $m_1=d+1$ and $m_1\leq d$.

Case (1) $m_1=d+1$.  
The proof is by induction on $b$  and $n$. 
If $n=2$ the statement is proved in Section \ref{section planar case}, while  if  $b=-nd$, i.e. if $m_i=0$ for all 
$i=1,\dots,s$, the statement is trivial.

For the pair $(n,b)$, with $n\ge 3$, and $m_1\ge 1$, we will 
assume the statement to be true for $(n-1,b)$ 
and $(n,b-1)$. We can conclude using Proposition \ref{vanishing r<n+2 empty}
that provides the induction step.

Case (2) $m_1\leq d$. 
 We claim first that $\tilde{D}$ has always non-negative coefficients except in the family of examples contained in Example \ref{example}.

\begin{example}\label{example} 
Let $D:=dH-\sum_{i=1}^{n-1}dE_{i}-m_nE_{n}-m_{n+1}E_{n+1}-m_{n+2}E_{n+2}$ with  $m_n+m_{n+1}+m_{n+2}=d+1$. Notice that only hyperplanes ${H_I}$, with $I=\{1,\ldots, n-1, n\}$, $I=\{1,\ldots, n-1, n+1\}$ or $I=\{1,\ldots, n-1, n+1\}$, split off $D$ with  multiplicity $m_{n}$, $m_{n+1}$ and $m_{n+2}$ respectively. This implies that $h^i(\tilde{D})=0$ for all $i\geq 0$ where $\tilde{D}=-H+E_1+\ldots+E_{n-1}+\sum_{\substack{I\subset \{1,\ldots,n-1\}\\ 2\leq |I|\leq n-1}}E_I$.
\end{example}

To prove this claim, we will first show that conditions $b\leq 1$ and $m_i\leq d$ imply that only hyperplanes ${H}_{I}$ with $I=\{1,\ldots, \check{i}, \ldots, \check{n+2}\}$ or  $I=\{1,\ldots, n-1, n+2\}$ can be fixed part of $D$. Indeed, assume by contradiction that for some $I=\{1,\ldots, \check{i}, \ldots, \check{j}, \ldots n+2\}$ with $j\leq n+1$ and $\{i,j\}\neq \{n,n+1\}$, the hyperplane ${H}_I$ is in the base locus of $D$, i.e. $k_{I}\geq 1$. This implies that the line spanned by the points of $J=\{i,j\}$ is not fixed part of $D$ since $K_{J}\leq 0$ for $I \coprod J=\{1,\ldots,n+2\}$. By the fact that multiplicities are arranged in a decreasing order we obtain
$$d\geq m_{i}+m_{j} \geq m_{i+1}+m_{j+1}.$$
This contradicts the formal definition of base locus of non-effective divisors, see  \eqref{strict transform empty} and \eqref{JJ}, since $K_{I}\geq 1$ and $K_{J}\leq 0$, for $\{i+1,j+1\}\subset I$.

To prove that $\tilde{D}$ has non-negative coefficients except in the cases discussed in Example \ref{example} we consider the divisor $D'=D+E_{n+2}$. We notice that $D'$ is an effective divisor since $m_i\leq d$ and $b(D')=b(D)-1\leq 0$, therefore $\tilde{D'}$ has non-negative coefficients. It is easy to see that $k_I(D')=k_I(D)$ whenever $I=\{1,\ldots, \check{i}, \ldots, n+1,\check{n+2}\}$, while $k_I(D')=k_I(D)-1$ if $I=\{1,\ldots, n-1,\check{n}, \check{n+1}, n+2\}$. Conclude that 
\begin{equation}\label{abc}
\tilde{D}=\tilde{D'}-\tilde{H}_{1, \ldots, n-1,\check{n}, \check{n+1}, n+2}-\sum_{\substack{I\in\JJ^>,n+2\in I\\2\le |I|\le n-1}} E_{I}.
\end{equation}
Also observe that
$$K_{1,\ldots, n-1,\check{n}, \check{n+1}, n+2}(D)=m_1+\cdots+m_{n-1}+m_{n+2}-(n-1)d\leq m_{n+2}.$$
Equality occurs if and only if $m_1=\cdots=m_{n-1}=d$, i.e. $D$ is a divisor as in Example \ref{example}.
Moreover, $K_{1,\ldots, n-1,\check{n}, \check{n+1}, n+2}(D)=m_{n+2}-1$ if and only if $m_1=\cdots=m_{n-2}=d$ and $m_{n-1}=d-1$. We leave it to the reader to check that $\tilde{D}$ has positive coefficients as well. Therefore we conclude
\begin{equation}\label{ab}
K_{1,\ldots, n-1,\check{n}, \check{n+1}, n+2}(D')+1=K_{1,\ldots, n-1,\check{n}, \check{n+1}, n+2}(D)\leq m_{n+2}-2.
\end{equation}
Now $\tilde{D'}$ is an effective divisor with $m_{n+2}'=m_{n+2}-1-K_{1,\ldots,n-1 \check{n}, \check{n+1}, n+2}(D')\geq 2$.  We conclude that the coefficient of the hyperplane class $H$, which is the degree of $D'$,  is at least $2$,  therefore equation \eqref{abc} implies $\tilde{D}$ has positive coefficients. 

\medskip 

We are going to prove now that vanishing theorems hold for divisors $\tilde{D}$, under the hypotheses \eqref{bounds for multiplicities 2}.

 The idea of the proof is as follows. Starting from a divisor, that is of the form
 $\tilde{D}$ defined in \eqref{strict transform empty}, using sequences of type \eqref{seq C}
we decrease by one degree and $n$ multiplicities by passing from $\tilde{D}$ to the divisor in the
 kernel of the sequence. In general, the kernel divisor will not be a strict transform of the form 
\eqref{strict transform empty}, since it can acquire simple linear base locus.
We further use sequences of type \eqref{seq B}  to eliminate the simple base locus and take the strict transform of the kernel divisor. See Section \ref{sequences} for a definition of sequences of type \eqref{seq B} and \eqref{seq C}.

The proof is based on induction on the dimension, $n$ (as the
restricted divisor lives in a an $(n-1)-$dimensional space)
 and on induction on the degree $d$ and the multiplicities $m_{i}$ (as the kernel divisor
has lower coefficients).
\medskip

We recall that $b=K_{I}+K_{I^c}\leq 1$ for any subset $I\subset\{1,\dots,n+2\}$, where $I^c:=\{1,\dots,n+2\}\setminus I$.  Therefore, $K_{I}$ can be both positive or zero for $|I|=2$. 
Moreover, we recall that the multiplicities have been arranged in decreasing order from the beginning. 
Furthermore, since $m_{i}\leq d+1$ and $b\leq 1$, we can have at most $n-1$ multiplicities equal to $d+1$. 
Let us assume, without loss of generality, that $J=\{1,\dots,n\}$.  Fix the hyperplane spanned by the points parametrised by $J$, $H_J$,  and let $\tilde{H}_{J}$  denote its strict transform, cfr. Remark \ref{blow up of hypersurface}. We use first a sequence of type \eqref{seq C} for the divisor $\tilde{D}$:
$$0\rightarrow F:=\tilde{D}-\tilde{H}_{J}\rightarrow \tilde{D} \rightarrow \tilde{D}|_{\tilde{H}_{J}}\rightarrow 0.$$
An easy computation shows that the kernel $F$
could have simple linear base locus, i.e. $K_{I}(F)=1$, for some index sets $I$ with $\{n+1, n+2\}\subset I$.
Using the notation \eqref{IIj}, let  $\JJ(n-1)^>_{n+1,n+2}$ be the set of index sets parametrising these cycles. 
We will use restriction sequences of type \eqref{seq B} to remove the simple linear base locus of $F$ of dimension at most $n-1$, in increasing dimension,
starting from the exceptional divisor of the line spanned by the last two points,
$$0\rightarrow F-E_{n+1,n+2}\rightarrow F \rightarrow F{|_{E_{n+1,n+2}}}\rightarrow 0,$$
and then continuing with the cycles of the set $\JJ(n-1)^>_{n+1,n+2}$.

An algorithm that does this can be constructed following the same idea as in the proofs of Proposition \ref{vanishing cremona}  and \ref{sequence B}.
Precisely, let $\prec$ be the total order on the index sets of $\JJ(n-1)^>_{n+1,n+2}$ inherited from the lexicographical order on the set of all index sets of $\{1,\dots,n+2\}$. For $2\le r\le n-1$, write $s_r$ for the cardinality of the set of index sets of $\JJ(n-1)^>_{n+1,n+2}$ of length $r+1$. We recursively define:
\begin{equation*}
\begin{split}
F(0,0)=&\ F-E_{n+1,n+2},\\
F(r,0)=&\ F(r-1,s_{r-1})-E_{I(r)_0}, \ 2\le \rho\le n-1,\\
F(r,j)=&\ F(r,j-1)-E_{I(r)_j}, \ 1\le j\le s_r.
\end{split}
\end{equation*}
The output of the algorithm is the divisor
$$F(n-1,s_{n-1})=\tilde{F}:=F-\sum_{I\in \JJ(n-1)^>_{n+1,n+2} }E_{I}.$$
As in the proof of Proposition \ref{sequence B}, the fist factor of every restriction is 
$$-\Cr_{r}(h)\boxtimes *,$$
that has vanishing cohomologies. This implies that  $h^i(\tilde{F})=h^i(F)$.
Notice that $b(\tilde{F})=b(F)=b(F)=b\leq 1$, so the kernel $\tilde{F}$ is now in the starting assumption \eqref{bounds for multiplicities 2}. The induction argument on the degree and multiplicities applies to the 
kernel divisor of \eqref{seq C}. Eventually the kernel divisor will become toric 
and by using Theorem \ref{toric theorem}, we know it has vanishing cohomologies.

\medskip

In order to conclude the proof, we analyse the restricted divisor $\tilde{D}|_{\tilde{H}_{J}}$ of the sequence of type \eqref{seq C}
and prove by induction on  $n$ that it has vanishing theorems. 
We denote by $e'$ the trace of the cycle $E_{n+1, n+2}$ on the hyperplane $H_{J}$, $e':=E_{n+1, n+2}|_{\tilde{H}_{J}}$. We have 
$$
\tilde{D}|_{\tilde{H}_{J}}=dh-\sum_{i=1}^{n} m_{i}e_{i}-k_{n+1,n+2}e'- 
\sum_{\substack{k_{I}>0\\ |I|\leq n-2, I\subset J}}k_{I}e_{I}-\sum_{\substack {k_{M}>0\\ |M|=n-1, M\subset J}} k_{M} \tilde{h}_{M},
$$ 
where for every subset $M\subset J$, $|M|=n-1$ then $\tilde{h}_{M}$ is the restriction $E_{M}|_{\tilde{H}_{J}}$ and $E_{M}$ is the exceptional divisor of a codimension$-2$ cycle in the base locus of $D$. Note that $\tilde{h}_{M}$ is (the strict transform of) a the hyperplane of $H_J$ passing through the points of $J$ parametrised by $M$, as in Remark \ref{hyperplane}.
We denote by $G$ the divisor 
$$G:=dh-\sum_{i=1}^{n} m_{i}e_{i}-k_{n+1,n+2}e'.$$
We leave to the reader to check that the restriction is a strict transform
$$\tilde{D}|_{\tilde{H}_{J}}=\tilde{G}.$$

If $k_{n+1, n+2}=0$, the trace $\tilde{D}|_{\tilde{H}_{J}}$ is toric. In this case $b(G)=K_{J}\leq K_{J}+K_{n+1,n+2}=b(D)\leq 1$ and Theorem \ref{toric theorem} applies to the divisor $G$, proving the vanishing theorems for $\tilde{G}$.

 If $k_{n+1,n+2}\geq 1$, the restriction $\tilde{D}|_{\tilde{H}_{J}}=\tilde{G}$ satisfies inequalities \eqref{bounds for multiplicities 2} with $b(F)=b(D)\leq 1$. Since $\tilde{G}$ is a divisor in the blown-up $\PP^{n-1}$ at $s=n+1$ points, we can prove by  induction on $n$ that the vanishing theorems hold for $\tilde{G}$. Proposition \ref{n=2} in fact treats the first step of the induction on $n$, namely the case $n=2$.
We use induction on degree and multiplicities to conclude the vanishing of $\tilde{D}$.
\end{proof}

\subsection{Case $s\ge n+3$}\label{section case n+3}
In this section we prove the result for the case of arbitrary number of points satisfying $b\le \min(n-s(d),s-n-2)$. 
\begin{proof}[Proof of Theorem \ref{empty arbitrary points}] 
As in  Theorem \ref{monster for empty} we observe that 
using Remark \ref{(I)(II)(III) verified for toric chambers} and  Theorem  \ref{higher cohomologies} we reduce the proof of Theorem \ref{monster for empty} for a divisor $D$ to the proof of vanishing theorems for its strict transform $\tilde{D}$.

We assume $m_1\ge \cdots \ge m_s$.
The effective case, namely  $m_i\leq d$, was proved in Theorem \ref{vanishing for >=n+3}. We can assume $m_1=d+1$.
Notice that $K_{2,\dots,n+1}=b(D)-m_1-
\sum_{i\ge n+2}m_i+d\le s-n-2-m_1-(s-n-1)+d\le 0$, 
the inequality follows from  $b\le s-n-2$, $m_i\ge 1$ and $m_1=d+1$.
This implies that the hyperplane 
spanned by $\{2,\dots, n+1\}$ is not contained in the base locus of $D$.  Furthermore, no
hyperplane spanned by $I(n-1)\subset\{2,\dots,s\}$ is. 

Notice  that the inequality $b\le s-n-2$ 
implies $K_{I}+K_{J}=\sum_{i\in I\cup J}m_{i}-nd\leq 0$ for any disjoint
 sets $I,J$ with $|I|+|J|=n+2$. Therefore, $\II$, the set of all multi-indices in $\{1,\dots,s\}$,
 satisfies conditions (I) and (III) of Section \ref{blow-up construction}.

We conclude that $h^i(\tilde{D})=0$ using induction on $n$ (the case $n=2$ was 
covered in proposition \ref{n=2}), and the induction procedure of Section \ref{section induction procedure}.

\end{proof}

\section{Vanishing theorems for points in star configuration in $\PP^n$}\label{star configuration}

As an application of the results proved in the previous sections, we 
compute the  number of global sections and  
 prove  vanishing
 theorems for the cohomology groups of the strict transforms along the linear
 base locus of some families of divisors interpolating points in \emph{star configuration} in $\PP^n$.

A star configuration of points is a collection of points satisfying some 
particular geometric relation. 
They have been object of study in many
 papers lately, see \cite{Dum2, geramita} and references therein.

Given $l$ hyperplanes in $\PP^n$ that meet properly, i.e. not three of them intersecting 
along a $\PP^{n-2}$, not four of them intersecting along a
 $\PP^{n-3}$ etc., 
a \emph{star configuration} of \emph{dimension} $r$ \emph{subspaces}
is the set given by the ${l\choose n-r}$ linear
 subspaces of dimension $r$ in  $\PP^n$ formed by taking all 
possible intersections of $n-r$ among the $l$ hyperplanes.

In \cite[Theorem 3.2]{geramita}, the authors  compute the Hilbert function of the ideals
of star configurations of dimension $r$  subspaces of multiplicity two.
This provides a complete classification  of linear systems in $\PP^n$ interpolating
such a scheme.
In Theorem \ref{star} we compute the number of global sections of a class of linear systems in $\PP^n$ 
interpolating star configurations of points obtained by $l=n+2$ hyperplanes with higher multiplicities.

\begin{remark}\label{star as hyperplane section}
When $l=n+2$,  star configurations of  dimension $r$
subspaces in $\PP^n$ are obtained as follows. 
Let us embed $\PP^n\hookrightarrow H\subset\PP^{n+1}$ and denote by $p_1,\dots,
p_{n+2}$ a general collection of points of $\PP^{n+1}$,
 that we may think of as the coordinate points, with respect to which $H$ is a general
hyperplane.
The family of points 
$q_{ij}:=L_{ij}\cap H\in H$, for all lines $L_{ij}=\langle p_i,p_j\rangle\subset\PP^{n+1}$, forms 
a star configuration of points in $H$. Indeed,  denoting by $H_l$ the hyperplane spanned by 
all $p_i$'s with $i\ne l$ for all $1\le l\le n+2$, we can write 
$q_{ij}=H\cap\bigcap_{l\ne i,j}H_{l}$.
Similarly, the family of $r$-linear subspaces
$\lambda_I:=L_I\cap H$,
 for all multi-indices $I=\{i_1,\dots,i_{r+2}\}\subset\{1,\dots, n+2\}$, is
a  star configuration of dimension $r$ subspaces in $H$.
\end{remark}

We now study effective divisors in $\PP^n$ interpolating star configurations of points.
Set $Y_{(0)}=Y^n_{(0)}$ to be the blow-up of $\PP^n$ at the star configuration
of points given as intersections of ${{n+2}\choose n}$ hyperplanes. Adopting the same notation of
Remark \ref{star as hyperplane section}, we call $q_{ij}$, $1\le i<j\le n+2$ such points.
Let us denote 
 by $h$ the hyperplane class and by $e_{ij}$ the exceptional divisors.

\begin{remark}
As in Remark \ref{star as hyperplane section}, let us embed 
$\PP^n\hookrightarrow \PP^{n+1}$ and, as in  Section \ref{blow-up construction},  
let  $X_{(1)}=X^{n+1}_{(1)}$ denote
the blow-up of $\PP^{n+1}$ at  general points $p_1,\dots, p_{n+2}$ and,
subsequently, along the lines spanned by those points, with exceptional divisors $E_{ij}$,
$1\le i<j\le n+1$. By writing $h=H{|_H}$ and $e_{ij}={E_{ij}}{|_H}$, 
we obtain the following isomorphism $Y_{(0)}\cong {X_{(1)}}{|_H}$.
\end{remark}

Given integers $d\ge m_1,\dots, m_{n+2}\ge0$, we use the notation $k_{ij}=\max(m_i+m_j-d,0)$.  Assume  
\begin{equation}\label{condition star}
\sum_{i=1}^{n+2} m_i\le (n+1)d,
\end{equation}
and consider the following divisor on $Y_{(0)}$:
$$
\Delta:=dh-\sum_{1\le i<j\le n+2}k_{ij}e_{ij}.
$$

We prove that it is only linearly obstructed, with linear base locus supported on 
the star configurations of linear subspaces $\lambda_I$ defined above
 (Remark \ref{star as hyperplane section}) and 
that its subsequent strict transforms after blowing-up the dimension $r$ star configuration have vanishing 
theorems. 

Let $\II$ be the set of all multi-indices in $\{1,\dots,n+2\}$, and for each
multi-index $I(r)\subset\II$ of cardinality $r+1$, we use the notation \eqref{mult k}.
For increasing $r$, let $Y_{(r)}=Y^{n}_{(r)}$ denote the blow-up of $Y_{(r-1)}$ along the strict transform of the
 star configuration of $r$-subspaces $\lambda_{I(r)}$ in $H$ and let 
$\Delta_{(r)}$  be the strict transform of $\Delta$.

\begin{theorem}\label{star}
In the above notation,  assume that \eqref{condition star} is satisfied. then we have
$$
h^{0}( \Delta)={{n+d}\choose{n}}+
      \sum_{I(r)\in\II,  r\ge 1}(-1)^{r}{{n+k_{I(r)}-r}\choose{n}}.
$$
Moreover, 
for all $1\le r\le n-1$, 
$h^i( \Delta_{(r)})=0$, for all $i\ne 0,r+1$, and
$$
h^{r+1}( \Delta_{(r)})=\sum_{I(\rho)\in\II, \rho\ge r+2}(-1)^\rho{{n+k_{I(\rho)}-\rho}\choose{n}}.
$$
In particular, $h^i( \tilde{\Delta})=0$, for all $i> 0$.
\end{theorem}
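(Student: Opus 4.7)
The plan is to realize the star configuration $\{q_{ij}\}$ as a hyperplane section of the standard $(n+2)$-point configuration in $\PP^{n+1}$, so that $\Delta_{(r)}$ becomes the restriction of a known divisor and its cohomology becomes accessible via Theorem \ref{monster for effective}. Following Remark \ref{star as hyperplane section}, embed $\PP^n$ as a general hyperplane $H\subset\PP^{n+1}$ with $p_1,\dots,p_{n+2}$ general points so that $q_{ij}=L_{ij}\cap H$ and, more generally, $\lambda_I=L_I\cap H$ for every multi-index $I$. Under this identification, $Y_{(r)}$ is the strict transform of $H$ inside $X^{n+1}_{(r+1)}$, and $\Delta_{(r)}=D_{(r+1)}|_H$ where $D:=dH-\sum_{i=1}^{n+2}m_iE_i$ on $X^{n+1}_{(0)}$.

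Since $D$ is effective with $s=n+2\le(n+1)+2$ base points in $\PP^{n+1}$, Theorem \ref{monster for effective} applies and gives $h^i(D_{(r+1)})=0$ for $i\ne r+2$, together with the explicit binomial formula for $h^{r+2}(D_{(r+1)})$. The long exact sequence attached to the short exact sequence
\begin{equation*}
0\to\mathcal{O}(D_{(r+1)}-H)\to\mathcal{O}(D_{(r+1)})\to\mathcal{O}(\Delta_{(r)})\to 0
\end{equation*}
then expresses $h^i(\Delta_{(r)})$ in terms of $h^i(D_{(r+1)}-H)$; in particular, for the last claim of the theorem it is enough to prove $h^i(\tilde{D}-H)=0$ for all $i\ge 2$.

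To handle $\tilde{D}-H$, I would exploit the auxiliary divisor $D-H=(d-1)H-\sum m_iE_i$ on the blow-up of $\PP^{n+1}$ at $n+2\le(n+1)+1$ general points: its multiplicities satisfy $m_i\le(d-1)+1$, and $b(D-H)=b(D)+(n+1)\le n+1$ by the hypothesis on $D$. These are precisely the conditions under which Theorem \ref{toric theorem} yields $h^i(\widetilde{D-H})=0$ for every $i\ge 0$. Since $\widetilde{D-H}$ and $\tilde{D}-H$ differ only by the effective sum $\sum_I(k_I(D-H)-k_I(D))E_I$ encoding the additional base locus that $D-H$ acquires from the lower degree, I would propagate the vanishing from $\widetilde{D-H}$ to $\tilde{D}-H$ by adding these exceptional divisors back one at a time, via successive restriction sequences in the spirit of Propositions \ref{simple obstructions} and \ref{sequence B}. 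Each restriction to an $E_I$ produces a Cremona-type divisor whose cohomology vanishes by Proposition \ref{vanishing almost cremona}, yielding the required $h^i(\tilde{D}-H)=0$ for $i\ge 2$.

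With $h^i(\tilde{\Delta})=0$ for $i\ge 1$ in hand, Theorem \ref{higher cohomologies} and Corollary \ref{number of sections} applied to $\Delta$ itself (noting $k_{ij}\le d\le d+1$ and that the family of star multi-indices $\II$ satisfies conditions (I) and (III) of Section \ref{blow-up construction}) give $h^i(\Delta_{(r)})=0$ for $i\ne 0,r+1$ together with $h^0(\Delta)=\ldim(\Delta)$ and $h^{r+1}(\Delta_{(r)})=l(\Delta,r+1)$. Matching these with the formulas in the statement then amounts to identifying the linear base locus of $\Delta$ as exactly the star subspaces $\lambda_{I(\rho)}$ with multiplicity $k_{I(\rho)}$—any other configuration of the $q_{ij}$'s has non-positive $K$-value by the bounds on $m_i$ and $d$—and observing that the contribution of an $(\rho-1)$-dimensional star cycle in $\PP^n$ yields the binomial $\binom{n+k_{I(\rho)}-\rho}{n}$ with the appropriate sign. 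The main obstacle will be the bookkeeping in the third paragraph: propagating the vanishing from $\widetilde{D-H}$ to $\tilde{D}-H$ through iterated restriction sequences requires careful tracking of nested exceptional divisors, in the same spirit as but more intricate than the computations of Section \ref{vanishing section n+3}.
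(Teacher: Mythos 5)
Your reduction is the same as the paper's: embed $\PP^n$ as a general hyperplane $H\subset\PP^{n+1}$, identify $\Delta_{(r)}$ with ${D_{(r+1)}}{|_H}$, and run the restriction sequence $0\to D_{(r+1)}-H\to D_{(r+1)}\to {D_{(r+1)}}{|_H}\to 0$, controlling the kernel through the degree-$(d-1)$ divisor $D'=D-H$. (The paper obtains $h^i(D_{(r+1)}-H)=h^i(D'_{(r+1)})$ in one stroke from Theorem \ref{higher cohomologies}(2), since the extra base-locus multiplicities that $D'$ acquires are bounded by the dimension of the corresponding cycle; your plan to re-run the restriction sequences by hand proves the same thing with more work, so this is only a matter of economy.)

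The genuine gap is in your last paragraph. Theorem \ref{higher cohomologies} and Corollary \ref{number of sections} are statements about divisors on the blow-up of $\PP^n$ at points in \emph{general} position whose obstructions are the linear spans of subsets of those points; their proofs rest on Proposition \ref{sharpBLL} and on the product structure and Cremona normal bundles of the exceptional divisors set up in Section \ref{blow-up construction}. The divisor $\Delta$ lives on the blow-up of $\PP^n$ at the $\binom{n+2}{2}$ points $q_{ij}$, which are in special position, and the cycles $\lambda_{I(\rho)}$ being blown up are not generic spans of subsets of the $q_{ij}$ (a star subspace of dimension $\rho-1$ contains $\binom{\rho+1}{2}$ of the points). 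So you cannot simply ``apply Theorem \ref{higher cohomologies} and Corollary \ref{number of sections} to $\Delta$ itself'': you would first need a star-configuration analogue of the base-locus lemma and of the normal-bundle computation, neither of which you supply, and checking conditions (I) and (III) for the star multi-indices does not substitute for this. The paper avoids the issue entirely: once $h^i(D_{(r+1)})$ and $h^i(D_{(r+1)}-H)=h^i(D'_{(r+1)})$ are known to vanish for $i\ne 0,r+2$, the long exact sequence itself delivers both the vanishing $h^i(\Delta_{(r)})=0$ for $i\ne 0,r+1$ and the closed formulas $h^0(\Delta_{(r)})=h^0(D)-h^0(D')$ and $h^{r+1}(\Delta_{(r)})=h^{r+2}(D'_{(r+1)})-h^{r+2}(D_{(r+1)})$, which are then evaluated using Theorem \ref{monster for effective}. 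Rerouting your final step through the exact sequence in this way closes the gap without any new input; the intrinsic description of the base locus of $\Delta_{(r)}$ is then a corollary of the upstairs picture, not an ingredient of the proof.
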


\begin{proof}

Recall the inclusion $H\subset\PP^{n+1}$ and let $X_{(r+1)}:=X^{n+1}_{(r+1)}$ 
 be the blow-up of $\PP^{n+1}$ first at the points $p_1,\dots, p_{n+2}$ and then along 
the linear subspaces $L_I$ of dimension bounded above by $r+1$, in increasing dimension.
We have the following inclusion
$Y_{(r)}\subset X_{(r+1)}$.

Let us consider the following
 divisors  on $X_{(0)}$  
$$
D=dH-\sum_{i=2}^{n+2}m_i E_i, \quad 
 D'=D-H.
$$
Notice that, since \eqref{condition star} is satisfied, then $D$ is effective (cfr. \eqref{toric effective cone}) and $D'$ is either effective or satisfies condition \eqref{n+1 b pos}.

Abusing notation, denote by $H$  the strict transform of $H\cong\PP^n\subset\PP^{n+1}$ and 
notice that the restriction ${D_{(r+1)}}{|_H}$ belongs to the linear system associated with $\Delta_{(r)}$:
$$
{D_{(r+1)}}{|_H}\in|\Delta_{(r)}|.
$$ 
Consider the following restriction sequence 
\begin{equation}\label{restriction to H}
0\to D_{(r+1)}-H\to D_{(r+1)}\to {D_{(r+1)}}{|_H}\to 0.
\end{equation}

We claim that the strict transforms of the linear $\rho$-cycles
contained in base locus of $D_{(r+1)}-H$ have multiplicity bounded above by $\rho$, for all 
 $\rho\le r+1$.
 Hence, by Theorem \ref{higher cohomologies} (2), they do 
not provide linear obstruction to such a divisor, namely $h^i(D_{(r+1)}-H)=h^i(D'_{(r+1)})=0$,  
for all $i\ne 0,r+2$.
Moreover the following holds: $h^0(D_{(r-1)}-H)=h^0(D'_{(r-1)})=h^0(D')$ and $h^0(D_{(r-1)})=h^0(D)$.
By Theorem \ref{monster for effective} we have that $h^i(D_{(r+1)})=0$, for all $i\ne 0,r+2$.
Therefore, we  conclude that
\begin{align*}h^0(\Delta_{(r)})&=h^0(D)-h^0(D'),\\ 
h^{r+1}(\Delta_{(r)})&=h^{r+2}(D'_{(r+1)})-h^{r+2}(D_{(r+1)}),
\end{align*}
and
that all higher cohomology groups vanish, by means of the  long exact sequence in cohomology associated with \eqref{restriction to H}.

In order to conclude, we are left to prove the claim.  Notice that the multiplicity of containment of $L_{I(\rho)}$ in the base locus of $D'$ is $k'_{I(\rho)}:=\max(\sum_{i\in I(\rho)}m_i-\rho(d-1),0)$. Moreover, in $D_{(r+1)}-H$ the strict transform of the exceptional divisor of $L_{I(\rho)}$ has been removed
$k_{I(\rho)}:=\max(\sum_{i\in I(\rho)}m_i-\rho d,0)$ times. To conclude, it is enough to observe that $k'_{I(\rho)}-k_{I(\rho)}\le\rho$.
\end{proof}

\begin{corollary}
The linear subspace ${\lambda_I}$ is contained with multiplicity
$k_I$ in the base locus of $\Delta_{(r)}$.
\end{corollary}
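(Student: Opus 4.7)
The strategy is to transfer the Linear Base Locus Lemma (Proposition \ref{sharpBLL}) from the ambient blow-up $X_{(0)}$ of $\PP^{n+1}$ down to $Y_{(0)}$ via the hyperplane restriction set up in the proof of Theorem \ref{star}. First, Proposition \ref{sharpBLL} applied to $D = dH - \sum_{i=1}^{n+2} m_i E_i$ on $X_{(0)}$ gives that for every multi-index $I \subseteq \{1,\dots,n+2\}$ with $K_I \geq 0$, the linear cycle $L_I \subset \PP^{n+1}$ lies in the base locus of $D$ with multiplicity exactly $k_I$, and hence in the base locus of $D_{(r+1)}$ with the same multiplicity.

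For the lower bound, observe that the proof of Theorem \ref{star} actually establishes the surjectivity of the restriction map on global sections $H^0(X_{(r+1)}, D_{(r+1)}) \twoheadrightarrow H^0(Y_{(r)}, \Delta_{(r)})$. Indeed, the vanishing $h^1(D'_{(r+1)}) = 0$ derived there, together with the dimension matching $h^0(\Delta_{(r)}) = h^0(D) - h^0(D')$, shows that the image of restriction inside $|\Delta_{(r)}|$ exhausts the full linear system. Since every section of $D_{(r+1)}$ vanishes on $L_I$ to order at least $k_I$, its restriction to $H \cong \PP^n$ vanishes on $\lambda_I = L_I \cap H$ to order at least $k_I$. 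Hence every element of $|\Delta_{(r)}|$ vanishes on $\lambda_I$ to order at least $k_I$, proving the lower bound.

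For the upper bound, we use that $H$ is a generic hyperplane and therefore meets $L_I$ transversally along $\lambda_I$. Take a general $\sigma \in |D|$: by Proposition \ref{sharpBLL} the vanishing order of $\sigma$ along $L_I$ is exactly $k_I$. Transversality of $H$ with $L_I$ then forces the restriction $\sigma|_H$ to vanish on $\lambda_I$ to order exactly $k_I$, producing an element of $|\Delta|$ that realizes this multiplicity as an upper bound. Since $\Delta_{(r)}$ is the strict transform of $\Delta$ after blow-ups that remove only lower-dimensional cycles, the same bound persists in $|\Delta_{(r)}|$ whenever $\lambda_I$ has not yet been blown up, i.e., for $|I| > r+2$.

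The main subtlety is ensuring that transversality faithfully transfers vanishing data from $L_I \subset \PP^{n+1}$ to $\lambda_I \subset H$: one must verify that a section vanishing to order exactly $k_I$ on $L_I$ does not acquire any higher vanishing upon restriction to $H$. This is a local computation at a general point of $\lambda_I$, where choosing coordinates adapted to both $L_I$ and $H$ shows that the transversality of these subvarieties prevents any such order-jumping, and the multiplicity of containment is preserved.
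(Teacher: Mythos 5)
Your proof is correct and follows essentially the same route as the paper's, which simply asserts that the linear base locus of $\Delta_{(r)}$ is the hyperplane section of the linear base locus of $D_{(r+1)}$ described in Proposition \ref{sharpBLL}. You supply the two details that make this assertion precise --- surjectivity of the restriction map on global sections (coming from the $h^1$-vanishing established in the proof of Theorem \ref{star}) for the lower bound, and transversality of the general hyperplane $H$ with $L_I$ for the upper bound --- both of which the paper leaves implicit.
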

\begin{proof}
Let $D$  be as  in the proof of Theorem \ref{star}.
The linear base locus of $\Delta_{(r)}$
is the intersection with the hyperplane $H$ of the linear base locus of $D_{(r+1)}$, described in 
Proposition \ref{sharpBLL},
and in particular it is supported
at the dimension $\rho$ star configurations, with $\rho\ge r+1$.
\end{proof}

An interpretation of the above corollary is that the only obstructions
are the linear subspaces $\lambda_I$.

Moreover, this suggests a definition of virtual linear dimension for divisors interpolating points  in star configuration in $\PP^n$, that generalises the notion of virtual linear dimension
for divisors interpolating points  in general position  that was introduced in
\cite{BraDumPos} and that has been extensively studied throughout  this paper.
While in the general case the linear obstructions are  
the linear subspaces spanned by the points, in this case they 
are given by the star configurations of linear subspaces.

\begin{remark}
One may study general hyperplane
sections of effective linearly obstructed divisors $D$ in $\PP^{n+1}$ interpolating arbitrary
numbers of points in general position with bounds \eqref{EffectivityCondition} or \eqref{bounds for multiplicities 2}. 
Using Theorem \ref{vanishing for >=n+3} or Theorem \ref{monster for effective} one analyses the divisor $D$ and using Corollary \ref{empty arbitrary points} or 
Theorem \ref{monster for empty} respectively one analyses the kernel divisor.
 The resulting restricted divisor
$\Delta={D_{(1)}}{|_H}$ in $Y^n_{(0)}$ interpolates points in special configuration
and is linearly obstructed. 
Moreover, a cohomological description 
such as the one established in Theorem \ref{star} can  be obtained for such divisors.
\end{remark}

\end{document}